\theoremstyle{plain}
\newtheorem{thm}{Theorem}[section]
\newtheorem{defi}[thm]{Definition}
\newtheorem{lem}[thm]{Lemma}
\newtheorem{cor}[thm]{Corollary}
\newtheorem{prop}[thm]{Proposition}
\theoremstyle{remark}
\newtheorem{rem}[thm]{\bf{Remark}}
\numberwithin{equation}{section}
\numberwithin{equation}{section}
\numberwithin{equation}{section}
\newcommand{\ol}{\overline}
\newcommand{\tr}{\operatorname{trace}}
\def\R{{\mathbb R}}
\def\S{{\mathbf S}}
\def\O{{\mathcal O}}
\def\rn{{\mathbb R}^n}
\def\vep{\varepsilon}
\begin{document}
\title[A PDE approach to Borell--Brascamp--Lieb inequality]{A Parabolic PDE-based Approach\\ to Borell--Brascamp--Lieb inequality}
\author[K. Ishige]{Kazuhiro Ishige}
\address[K. Ishige]{Graduate School of Mathematical Sciences, 
University of Tokyo 
3-8-1 Komaba, Meguro-ku, Tokyo, 153-8914, Japan}
\email{ishige@ms.u-tokyo.ac.jp}

\author[Q. Liu]{Qing Liu} 
\address[Q. Liu]{Geometric Partial Differential Equations Unit, Okinawa Institute of Science and Technology Graduate University, 1919-1 Tancha, Onna-son, Kunigami-gun, 
Okinawa, 904-0495, Japan}
\email{qing.liu@oist.jp}

\author[P. Salani]{Paolo Salani}
\address[P. Salani]{Dipartimento di Matematica``U. Dini”, Universit\`{a} di Firenze, viale Morgagni 67/A, 50134 Firenze, Italy}
\email{paolo.salani@unifi.it}

\date{}

\begin{abstract}
In this paper, we provide a new PDE proof for the celebrated Borell--Brascamp--Lieb inequality. 
Our approach reveals a deep connection between the Borell--Brascamp--Lieb inequality and properties of diffusion equations of porous medium type pertaining to the large time asymptotics and preservation of a generalized concavity of the solutions.
%\textcolor{blue}{\sout{While all previously known proofs adopt techniques from either convex analysis and geometry or optimal transportation, 
%our approach is based on a generalized concavity preserving property together with the large time asymptotics for diffusion equations of porous medium type.} }
We also recover the equality condition in the special case of the Pr\'ekopa--Leindler inequality by further exploiting known properties of the heat equation including the eventual log-concavity and backward uniqueness of solutions. 

%which is a special case of the Borell--Brascamp--Lieb inequality. 

\end{abstract}

\subjclass[2020]{39B62, 35E10, 35K05, 35D40}
\keywords{Borell--Brascamp--Lieb inequality, Pr\'ekopa--Leindler inequality, concavity properties, heat equation, porous medium equation, fast diffusion, viscosity solution}

\maketitle
%%%%%%%%%%%%%%%%%%%%%%%%%%%%%%%%%%%%%%
%%%%%%%%%%%%%%%%%%%%%%%%%%%%%%%%%%%%%%
\section{Introduction}
%%%%%%%%%%%%%%%%%%%%%%%%%%%%%%%%%%%%%%
%%%%%%%%%%%%%%%%%%%%%%%%%%%%%%%%%%%%%%
%\textcolor{red}{
\subsection{Background}
Let  $\lambda\in(0,1)$ and $ f,g,h$ be nonegative measurable functions in $\rn$ such that
\begin{equation}\label{eq:1.1}
h((1-\lambda)y+\lambda z)\geq f(y)^{1-\lambda} g(z)^\lambda
\quad\text{for almost all $y$, $z\in\rn$}. 
\end{equation}
The {\it Pr\'ekopa-Leindler inequality} (also written as PL in brief below) states that 
\begin{equation}\label{eq:1.2}
\int_{\rn} h(x)\,dx\geq\left(\int_{\rn}  f (x)\,dx\right)^{1-\lambda}\left(\int_{\rn}  g(x) \,dx\right)^{\lambda}. 
\end{equation}
PL was proved by Pr\'ekopa~\cites{Pre1, Pre2} and Leindler~\cite{Lei} and it is considered an equivalent functional form of
the Brunn-Minkowski inequality, a crucial result in the theory of convex bodies.
%}
% (see \cite{G} for a beautiful presentation of the Brunn-Minkowski inequality and its relations with PL and many other important geometric and analytic inequalities).

%{\color{red}
PL is in fact a particular case of the {\it Borell--Brascamp--Lieb inequality}
(often shortened as BBL below). 
To state them, we introduce the power means of nonnegative numbers: for $a,b\geq 0$ and $\alpha\in[-\infty,+\infty]$, we set  
\begin{equation*}
\begin{array}{ll}
{M}_\alpha(a, b; \lambda)=0\quad\text{ for every }q\in\R\cup\{\pm\infty\}\quad&\text{if }ab=0, \\
\\
{M}_\alpha(a, b; \lambda) =\left\{
 \begin{array}{ll}
 \max\{a,b\}  \, \,\, \, &\,\,\alpha=+\infty,  \\
 \left[ (1-\lambda)a^\alpha + \lambda b^\alpha\right]^{\frac{1}{\alpha}} \, \, &\,\,\alpha\in\R\setminus\{0\},  \vspace{3pt}\\
a^{1-\lambda} b^\lambda \,\, \, \, &\,\,\alpha=0, \\
  \min\{a,b\} \, \, \, \, &\,\,\alpha=-\infty, 
                  \end{array}
\right.\,\quad&\text{if }ab>0. 
\end{array}
\end{equation*} 
We can now recall the Borell--Brascamp--Lieb inequality.
\begin{thm}
\label{thm:1.1}
Let $n\geq 1$, $\lambda\in(0,1)$ and $\alpha\in[-1/n,\infty]$.  
Assume that nonnegative integrable functions $f$, $g$, and $h$ in ${\mathbb R}^n$ satisfy 
\begin{equation}
\label{eq:1.3}
h((1-\lambda)y+\lambda z)\ge M_\alpha(f(y), g(z); \lambda)
\end{equation}
for almost all $y$, $z\in{\mathbb R}^n$. Then 
\begin{equation}
\label{eq:1.4}
\int_{\R^n}h(x)\,dx\ge
M_{\frac{\alpha}{1+n\alpha}}\left(\int_{\R^n} f(x)\,dx, \int_{\R^n} g(x)\,dx; \lambda\right).
\end{equation}
Here we interpret
$$
\frac{\alpha}{n\alpha+1}:=-\infty\quad\mbox{if}\quad\alpha=-\frac{1}{n},
\qquad
\frac{\alpha}{n\alpha+1}:=\frac{1}{n}\quad\mbox{if}\quad\alpha=\infty.
$$
\end{thm}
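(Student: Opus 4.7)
The plan is to realize \eqref{eq:1.4} as the long-time manifestation of a generalized concavity property preserved by a porous-medium-type parabolic flow. I would set $m = 1 + \alpha$, so that $m \in [1 - 1/n, +\infty]$ as $\alpha \in [-1/n, +\infty]$, and consider the Cauchy problem
\[
\partial_t u = \Delta(u^m) \quad \text{in } \R^n \times (0, +\infty),
\]
which is the heat equation when $\alpha = 0$, the porous medium equation when $\alpha > 0$, and the (supercritical) fast diffusion equation when $-1/n \leq \alpha < 0$. Denote by $u_f, u_g, u_h$ the viscosity (or nonnegative weak) solutions with initial data $f, g, h$; in all three regimes mass is conserved and the Friedman--Kamin--V\'azquez theory supplies Barenblatt--Pattle asymptotics as $t \to +\infty$.

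The first step is to show that the pointwise $M_\alpha$-relation \eqref{eq:1.3} is propagated along the flow, namely
\[
u_h\bigl((1-\lambda) y + \lambda z,\, t\bigr) \geq M_\alpha \bigl( u_f(y, t),\, u_g(z, t);\, \lambda \bigr) \quad \text{for all } y, z \in \R^n,\ t > 0.
\]
I would approach this via a viscosity-solution comparison in the spirit of Ishii--Crandall--Lions. Assuming a first violation time $t_0$ and points $y_0, z_0$, I would double the spatial variables and introduce a suitable penalization to produce smooth test functions simultaneously touching $u_f$ at $(y_0, t_0)$, $u_g$ at $(z_0, t_0)$, and $u_h$ at $((1-\lambda)y_0 + \lambda z_0, t_0)$. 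The algebraic matching $m = 1 + \alpha$ between the PDE exponent and the $M_\alpha$-mean is what makes the ensuing PDE computation close: differentiating the candidate $M_\alpha$-envelope and comparing with the equation yields a contradiction at the parabolic level.

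The second step is to feed the preserved relation into the large-time asymptotic description
\[
u_\star(x, t) \sim t^{-n/(n\alpha + 2)}\, F\bigl( x\, t^{-1/(n\alpha + 2)};\, M_\star \bigr),\qquad \star \in \{f, g, h\},
\]
where $M_\star$ is the initial mass and the Barenblatt profile $F(\cdot; M) = [A(M) - c|\cdot|^2]_+^{1/\alpha}$ has $A(M)$ scaling as a power of $M$ dictated by mass normalization. Evaluating the preserved inequality at suitably rescaled test points and passing to the limit $t \to +\infty$, the common $t$-prefactor cancels and the resulting relation between the profile amplitudes, combined with the explicit mass-to-amplitude scaling, yields the announced inequality with exponent $\alpha/(1 + n\alpha)$.

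The main obstacle is the first step. Preservation of $\alpha$-concavity of a \emph{single} PME solution is a classical fact, but the BBL-type relation couples $u_f, u_g, u_h$ at three distinct spatial points and must be propagated through a nonlinear, possibly degenerate PDE that admits neither a convolution representation nor a linear superposition principle. The viscosity framework is essential here: working directly with sub/superjets allows one to handle both the degeneracy of $u^m$ on $\{u=0\}$ and the unboundedness of the domain, while the matching $m = 1 + \alpha$ is what guarantees that the nonlinear operator acting on an $M_\alpha$-envelope of two admissible test functions reproduces, up to controllable error, the operator acting on the envelope itself, closing the comparison argument.
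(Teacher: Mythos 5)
Your overall architecture (propagate the $M_\alpha$-relation along a porous-medium-type flow via a viscosity-solution comparison, then read off the inequality from the Friedman--Kamin large-time asymptotics) is exactly the paper's strategy. However, there is a concrete error at the heart of Step 1: the exponent matching. You set $m=1+\alpha$, but the correct relation is $\alpha=(m-1)/2$, i.e.\ $m=2\alpha+1$. This is not a matter of normalization: for the equation $\partial_tu=\tfrac1m\Delta(u^m)$ (or $\partial_tu=\Delta(u^m)$, which only rescales time), writing the operator in terms of $v=u^\alpha$ produces, after the rescaling used in the doubled-variable/jet computation, the quantity $-r^{(m-1)/\alpha-2}\operatorname{tr}A-(m-1)r^{(m-1)/\alpha-1}|\theta|^2$. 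Only when $(m-1)/\alpha=2$ is this \emph{linear} in $(r,A)$, which is precisely what lets the convex-combination inequality (condition (F) in the paper's Theorem~\ref{thm:2.2}) close. With your choice $\alpha=m-1$ the leading term becomes $-r^{-1}\operatorname{tr}A$, and the required inequality $\sum\lambda_i\operatorname{tr}A_i/r_i\le(\sum\lambda_i\operatorname{tr}A_i)/(\sum\lambda_ir_i)$ goes the \emph{wrong} way (Jensen), so the comparison argument does not close. The mismatch also corrupts Step 2: with $d=n(m-1)+2$ the asymptotics give $t^{n/d}u(0,t)\to C\|u(\cdot,0)\|_{L^1}^{2/d}$, and the propagated $M_\alpha$-relation at the origin yields $\|h\|_{L^1}\ge M_{2\alpha/d}(\|f\|_{L^1},\|g\|_{L^1};\lambda)$; one gets $2\alpha/d=\alpha/(n\alpha+1)$ only if $d=2(n\alpha+1)$, i.e.\ $m=2\alpha+1$. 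Your $m=1+\alpha$ would produce the exponent $2\alpha/(n\alpha+2)$, which for $\alpha>0$ is \emph{strictly larger} than $\alpha/(n\alpha+1)$ and hence a false (too strong) conclusion.

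A second, independent issue is the range of $\alpha$. Even with the correct matching $m=2\alpha+1$, the case $\alpha>0$ corresponds to the degenerate PME $m>1$, where solutions are compactly supported and the sup defining the Minkowski convolution need not admit maximizers, nor does the positivity used in the jet computation hold; the paper deliberately runs the PDE argument only for $\alpha\le0$ (i.e.\ $m\le1$, including the superfast regime $m\le0$ in one dimension for $\alpha\le-1/2$, which your choice of $m$ silently avoids) and obtains $\alpha>0$ from $\alpha=0$ by the classical homogeneity reduction of Appendix~\ref{section:A}. You should either restrict your PDE argument to $\alpha\le0$ and invoke that reduction, or supply a genuinely new argument for the degenerate case. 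Finally, the passage from continuous, suitably decaying data to general $L^1$ data requires a nontrivial approximation preserving the hypothesis \eqref{eq:1.3}, which your sketch does not address.
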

 We refer to the foundational works \cites{Bor1,BrLi, D, HM} about this inequality. To avoid triviality, throughout this paper we shall consider functions with positive $L^1$ norms. It is clear that PL corresponds to the case $\alpha=0$. PL and BBL can be generalized in several ways, including the case involving the mean of more than two functions (actually, of any finite number of functions), see for instance \cite{Ma} for a recent simple proof and more references. 
We also recall that in the one dimensional case, Dubuc proved a strengthened version of BBL in  \cite{Dubuc} (see also \cite{RS2}). 
%The well-known Pr\'ekopa--Leindler inequality is a special case of the Borell--Brascamp--Lieb inequality with the choice $\alpha=0$, that is, 
%\[
%\int_{\R^n} h(x)\, dx\geq \left(\int_{\R^n} f(x)\, dx\right)^{1-\lambda} \left(\int_{\R^n} g(x)\, dx\right)^\lambda. 
%\]
%holds for \textcolor{red}{nonnegative integral functions} $f, g$ and $h$ satisfying
%\[
%h((1-\lambda)y+\lambda z)\geq f(y)^{1-\lambda} g(z)^\lambda
%\]
%for all $y, z\in \R^n$. 
%See \cite{Pre1, Lei, Pre2, BrLi} \textcolor{red}{(Is the order of references O.K.?)} for classical proofs of this inequality. 
%{\color{red}
Exactly as in the case of PL, BBL with any $\alpha\in[-1/n,\infty]$ is equivalent to the Brunn-Minkowski inequality and it is related to many other important geometric and analytic inequalities; see \cite{Gar} for a  comprehensive and insightful survey on this topic along with more related references. 
%}
%\noindent In this theorem, for our convenience, we abuse the notation $M_\alpha(a, b; \lambda)$ for $\lambda\in (0, 1)$. One should consider this as $M_\alpha(a, b; \{1-\lambda, \lambda\})$ with $\{1-\lambda, \lambda\}\in \Lambda_2$.
%(see the book by Villani \cite{V}  for references or again the paper by Gardner \cite{G}).

%{\color{red}
A delicate question is the rigidity of PL and BBL, that is, to determine whether and when equality can hold in \eqref{eq:1.4}. 
%\textcolor{blue}{\sout{whether equality can hold in \eqref{eq:1.4}, and under which conditions on the involved functions.}} 
This question has been solved by Dubuc in \cite{Dubuc}, which proved that equality in \eqref{eq:1.4} holds if and only if 
$f$, $g$, $h$ % \textcolor{blue}{\sout{and $h$}} are {\it $\alpha$-concave} 
and they all coincide, up to positive scalar multiplication and homotheties. 
We recall that a nonnegative function $v$ is called {\it $\alpha$-concave} in ${\mathbb R}^n$ if $v$ satisfies 
\[
v((1-\lambda)y+\lambda z)\ge M_\alpha(v(y),v(z);\lambda)
\quad\mbox{for all $y$, $z\in{\mathbb R}^n$ and $\lambda\in(0,1)$.}
\]
This is equivalent to saying that
%\begin{itemize}
  %\item  
  the positivity set $P_v:=\{x\in\rn\,:\,v(x)>0\}$ of $v$ is convex and 
  $$
  \left\{
  \begin{array}{ll}
  v\,\text{is a positive constant in }P_v&\text{if}\quad\alpha=+\infty;\\
  v^\alpha\,\,\text{ is concave in }P_v&\text{if}\quad\alpha>0;\\
  \log v\,\,\text{ is concave in }P_v&\text{if}\quad\alpha=0;\\
  v^\alpha\,\,\text{ is convex in }P_v&\text{if}\quad\alpha<0;\\
  \text{every superlevel set of }v\text{ is convex}&\text{if}\quad\alpha=-\infty. 
  \end{array}\right.
  $$
When $\alpha=0$, $v$ is also said {\it log-concave}; when $\alpha=-\infty$, $v$ is also said {\it quasi-concave}. 
Given a proper subset $\Omega$ of $\rn$, we say that $v:\Omega\to[0,+\infty)$ is $\alpha$-concave in $\Omega$ if the function $v^\alpha$ obtained by zero extension outside $\Omega$ is $\alpha$-concave in $\rn$. The result of Dubuc is explicitly stated in the following theorem; consult also \cite{BK} and \cite{R}.

\begin{thm}\label{thm:1.2}
In the same assumptions and notation of Theorem~{\rm\ref{thm:1.1}}, equality holds in~\eqref{eq:1.4} if and only if 
$f$, $g$, $h$ are $\alpha$-concave in ${\mathbb R}^n$ and there exist suitable $a$, $b$, $c$, $d\in(0,\infty)$ and $x_0,x_1\in\rn$ such that 
%\textcolor{blue}{\sout{$0<a,b,c,d\in\R$}}
%there exist 
%$\eta\in \R^n$ and $c>0$ 
%such that 
\begin{equation}\label{eq:1.5}
g(x)=a\,f(bx-x_0), \quad h(x)=c\,f(dx-x_1) \quad\text{for almost all }x\in\rn.
%g(cx+\eta)=c^{1\over \alpha}f(x),\quad 
%h(1-\lambda+c\lambda) x+\lambda \eta)=M_{\alpha\over n\alpha+1}(1, c^{n\alpha+1\over \alpha}; \lambda) f(x).
\end{equation}
%for almost every $x\in \R^n$, 

%\begin{equation}%label{equalitycondition}%
% g(x)=a\,f(bx-x_0), \quad h(x)=c\,f(dx-x_1)\quad\text{a.e. }x\in\rn, 
%\end{equation}
%for suitable $0<a,b,c,d\in\R$ and $x_0,x_1\in\rn$.
\end{thm}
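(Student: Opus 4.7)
The plan is to combine a direct verification of the sufficiency direction with a PDE-based necessity argument that follows the parabolic philosophy driving the proof of Theorem~\ref{thm:1.1}.

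For sufficiency, assume that $f$ is $\alpha$-concave and that $g,h$ are obtained from $f$ via the scaling relations~\eqref{eq:1.5}. Then the hypothesis~\eqref{eq:1.3} is checked directly from the transformation law of $M_\alpha$ under affine changes of variables, while~\eqref{eq:1.4} follows from a change-of-variables computation that produces a compatibility identity among $a,b,c,d$ and $\lambda$. The $\alpha$-concavity of $f$ guarantees both the hypothesis and a matching integration constant, and no PDE is needed here.

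For necessity, I would focus first on the Pr\'ekopa--Leindler case $\alpha=0$, which is the case the PDE method of this paper handles afresh. Starting from equality in~\eqref{eq:1.4}, run the heat semigroup on $f,g,h$ to produce $u(\cdot,t),\,v(\cdot,t),\,w(\cdot,t)$. The main steps are: (i) show that PL equality propagates along the flow, so that at every $t>0$ the triple $(u,v,w)(\cdot,t)$ saturates~\eqref{eq:1.2}; (ii) invoke the Brascamp--Lieb preservation of log-concavity, together with the eventual log-concavity of heat flows from integrable data, so that for sufficiently large $t$ the functions $u,v,w$ are log-concave; (iii) pass to the Gaussian large-time asymptotic profile of each, and use the rigidity of PL for Gaussians to force these profiles to coincide up to dilation and translation; (iv) transport the resulting structural identity backward in time by backward uniqueness for the heat equation, arriving at~\eqref{eq:1.5} and at the log-concavity of $f,g,h$ at $t=0$.

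For general $\alpha\in[-1/n,\infty]$ the same template should, in principle, apply with the heat equation replaced by the porous medium ($\alpha>0$) or fast diffusion ($\alpha<0$) equation and the Gaussian replaced by the Barenblatt self-similar profile, the natural pressure variable being the quantity whose $\alpha$-concavity is preserved along the flow. The main obstacle I foresee is step~(iv): backward uniqueness for porous medium and fast diffusion equations is considerably more subtle than for the heat equation, especially near the free boundary of compactly supported solutions and in the singular diffusion regime; step~(iii) is also harder, since the rigidity of BBL for Barenblatt profiles is more delicate than its Gaussian counterpart. For this reason I expect the PDE-based proof of rigidity to be carried out here only for $\alpha=0$, with the general statement for $\alpha\neq0$ invoked from Dubuc~\cite{Dubuc}.
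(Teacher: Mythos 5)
Your overall architecture is close to the paper's: the paper indeed proves the full Theorem~\ref{thm:1.2} only by citation to Dubuc, and its own PDE contribution (Theorem~\ref{thm:5.1}) covers exactly the case $\alpha=0$ for continuous compactly supported $f,g,h$, with the obstruction for $\alpha<0$ being, as you guessed, the lack of backward uniqueness for fast diffusion. Steps (i), (ii) and (iv) of your necessity argument match the paper: equality in \eqref{eq:1.4} plus mass conservation forces $u_\lambda=\ol u_\lambda$ for all $t>0$, eventual (strong) log-concavity gives a time $t_\ast$ at which all three solutions are strongly log-concave, and backward uniqueness transports the structural identity back to $t=0$. (One caveat on (ii): Brascamp--Lieb preservation of log-concavity is not available here, since log-concavity of the data is part of the conclusion, not a hypothesis; only the eventual log-concavity results, which require compact support, are usable.)

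The genuine gap is your step (iii). The large-time Gaussian asymptotics cannot carry the rigidity information: after the rescaling $t^{n/2}u(x\sqrt t,t)$, every heat flow with integrable data converges to the same Gaussian profile, normalized only by the total mass. Hence the asymptotic profiles of $u_0,u_1,u_\lambda$ always coincide up to scalar multiples, whether or not equality holds in \eqref{eq:1.4}, and ``rigidity of PL for Gaussians'' yields no constraint on $f,g,h$; moreover it leaves you with nothing at finite time to feed into backward uniqueness. What the paper does instead is a pointwise second-order analysis at the fixed time $t_\ast$: from $u_\lambda=\ol u_\lambda$ one gets, for each $x_\ast$, an optimizing pair $(x_1,x_2)$ at which the first-order conditions \eqref{eq:5.8} hold and the Hessians of $v_i=\log u_i$ satisfy the harmonic-mean inequality \eqref{eq:5.9}; combining this with the equations for $v_i$ and the trace inequality $(1-\lambda)\tr X_1+\lambda\tr X_2\le \tr((1-\lambda)X_1^{-1}+\lambda X_2^{-1})^{-1}$ for negative definite matrices (with equality iff $X_1=X_2$) forces $\nabla^2 v_0(x_1,t_\ast)=\nabla^2 v_1(x_2,t_\ast)=\nabla^2 v_\lambda(x_\ast,t_\ast)$. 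A Legendre-transform argument (Lemma~\ref{thm:5.2}) then upgrades the equality of gradients and Hessians at matched points to the global identity $u_0(\cdot,t_\ast)=k\,u_1(\cdot+\eta,t_\ast)=k^\lambda u_\lambda(\cdot+\lambda\eta,t_\ast)$, and only then does backward uniqueness apply. You would need to replace your step (iii) by an argument of this finite-time, infinitesimal type for the proof to close.
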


Once the rigidity is established, with precise equality conditions, it is natural to ask about the stability of an inequality. For BBL the question is the following: if the equality in \eqref{eq:1.4} nearly holds (in certain sense), then must the three functions $ f,g,h$ almost satisfy the conditions dictated by \cite{Dubuc}? In particular,  are they necessarily close (in some suitable sense) to be $\alpha$-concave and homothetic to each other? Notice that the two questions about $\alpha$-concavity and homothety are distinct.  This is an open and current subject of investigation and
as far as we know, there are very few results regarding the stability of PL or more in general of BBL, most of them assuming $\alpha$-concavity or however restricting to a special class of functions (see \cites{BB1,BB2,BD,BF,E,GS}). To the best of our knowledge, only \cites{BFR,RS1,RS2} address this question without restrictions on the involved functions, hence also investigating  the ``stability" of the $\alpha$-concavity property. Moreover, apart from \cite{RS2}, which is however limited to the strengthened one dimensional case, we are not aware of any papers treating the stability in the case $\alpha<0$. Notice that the stability of BBL is closely related to the same question for the Brunn-Minkowski inequality. The literature about the latter is certainly more extensive and often dated, but breakthroughs beyond the class of convex sets have only emerged more recently; 
see \cites{Ch1, Ch2, Ch3, FJ1,FJ2, FJ3} and the latest (and sharp) results \cite{HST,FHT}. %\cites{Ch1,Ch2,Ch3,FJ1,FJ2}.
%\smallskip

In addition to the geometric flavor evoked by their equivalence with the Brunn-Minkowski inequality,  
PL and BBL have may interesting applications to PDEs and calculus of variations. In particular, PL enables to prove rather
easily that the log-concavity of the initial datum is preserved by the heat flow. Such a proof is accredited to Brascamp and Lieb \cite{BrLi}, but it seems to have been first explicitly written by  Deslauriers and Dubuc in~\cite{DeDu}. 
%which we are going to describe in the next lines.
%Let $0\leq \phi\in L^1(\rn)$ and consider the problem
%\begin{equation}\label{eq:1.6}
%\left\{\begin{array}{ll}
%\partial_t u=\Delta u\quad&\mbox{in}\quad{\mathbb R}^n\times(0,\infty),\\
%u(\cdot,0)=\phi\geq0\quad&\mbox{in}\quad{\mathbb R}^n. 
%\end{array}\right.
%\end{equation}
%Its solution is given by
%\begin{equation}
%\label{eq:1.3}
%u(x,t)=(e^{t\Delta_{{\mathbb R}^n}}\phi)(x):=(4\pi t)^{-\frac{n}{2}}\int_{{\mathbb R}^n}e^{-\frac{|x-y|^2}{4t}}\phi(y)\,dy. 
%\end{equation}
%Assume the initial datum $\phi$ is log concave, that is
%\begin{equation}%label{flogconc}%
%\phi(1-\lambda)x_0+\lambda x_1)\geq \phi(x_0)^{1-\lambda}\phi(x_1)^\lambda\qquad\text{for }\lambda\in(0,1),\,x_0,x_1\in\rn. 
%\end{equation}
%Now, given $\lambda\in(0,1)$, $x_0,x_1\in\rn$ and $x_\lambda=(1-\lambda)x_0+\lambda x_1$, we set 
%$$
 %f(y)=(4\pi t)^{-\frac{n}{2}}e^{-\frac{|x_0-y|^2}{4t}}\phi(y), \quad\,g(y)=(4\pi t)^{-\frac{n}{2}}e^{-\frac{|x_1-y|^2}{4t}}\phi(y), 
%$$ 
%$$
%h(y)=(4\pi t)^{-\frac{n}{2}}e^{-\frac{|x_\lambda-y|^2}{4t}}\phi(y), \quad\text{ for }\,t>0, 
%$$
%and we notice that the logconcavity of $\phi$, that is %eqref{flogconc}%, implies \eqref{eq:1.1}. Since by \eqref{eq:1.3} we have $u(x_0,t)=\int_{{\mathbb R}^n} f(y)\,dy$, $u(x_1,t)=\int_{{\mathbb R}^n} g(y)\,dy$ and $u(x_\lambda,t)=\int_{{\mathbb R}^n} h(y)\,dy$ then \eqref{eq:1.2} directly yields the logconcavity of $u$ with respect to $x$. 
%Then PL implies the preservation of logconcavity by the Dirichlet Heat Flow (DHF in short).
The primary objective of this paper is to show that the reverse implication holds as well, not only for PL but also for general BBL with any $\alpha\in(-1/n,0]$. %There is indeed a perfect equivalence between PL and the preservation of log-concavity. 
%More generally, we show the existence of a similar relation between and t

Notice that the previously mentioned known proofs of BBL and PL %\textcolor{blue}{\sout{the Borell-Brascamp--Lieb or Pr\'ekopa--Leindler inequalities}} 
adopt techniques from either convex analysis and geometry or optimal transportation. %Classical proofs typically start with examining the one dimensional case and then proceed with induction on the dimension. 
%Classical proofs typically start with examining the one dimensional case and then proceed with induction on the dimension and that all the {\color{blue}above} mentioned proofs of BBL and PL %\textcolor{blue}{\sout{the Borell-Brascamp--Lieb or Pr\'ekopa--Leindler inequalities}} 
%adopt techniques from either convex analysis and geometry {\color{blue}or measure theory} or optimal transportation. 
In contrast, our present work provides a PDE-based approach to BBL for $\alpha\leq 0$. It is based on a power concavity  property of solutions to suitable parabolic equations, which has been successfully developed earlier in slightly different contexts 
%for proving
  %suitable parabolic equations. 
%More precisely, 
%Our PDE techniques, %entirely independent of BBL {\color{blue} and 
%successfully 
%the preservation of concavity properties by parabolic flows 
(see for instance \cites{IshLS, IshSaTa2} and references therein). We will demonstrate how these PDE arguments can also be adapted to prove PL and all BBL.  Our techniques %of using log-concavity preservation for heat flow to
for the case of PL will be briefly outlined in Section~\ref{sec:pl-sketch}. 
%Notice that classical proofs typically start with examining the one dimensional case and then proceed with induction on the dimension and that all the {\color{blue}above} mentioned proofs of BBL and PL %\textcolor{blue}{\sout{the Borell-Brascamp--Lieb or Pr\'ekopa--Leindler inequalities}} 
%adopt techniques from either convex analysis and geometry {\color{blue}or measure theory} or optimal transportation.
This PDE approach to PL however is not completely new: an analogous idea was introduced in \cite{BaCo}.
%\textcolor{blue}{(I think that this is another story... Borel's paper is related with us.)} \textcolor{magenta}{(Qing: The paper \cite{BaCo} 
It was inspired by Borell, who gave an earlier similar proof of Ehrhard's inequality in  \cite{Bor2} and later improved in \cite{SvH} to also catch the equality case. We remark that BBL for a specific $\alpha$ implies BBL for all $\beta\geq\alpha$ by a classical homogeneity argument, which we will present in Appendix \ref{section:A} for the convenience of the reader. Consequently, our analysis for the case $\alpha<0$ not only generalizes the results in \cite{BaCo} but also improves this PDE method.  %  we will concentrate only on the case $\alpha\leq 0$. 

Furthermore, we develop our PDE approach also for the delicate equality conditions in Theorem~\ref{thm:1.2}. %We focus on the case when are continuous functions with compact support. 
Let us stress that, although our argument for equality conditions formally works for every $\alpha$, here we can give a complete proof only for the case $\alpha=0$, assuming also that $f, g, h$ are continuous functions with compact support; see Theorem \ref{thm:5.1}. Clearly, by the same homogeneity argument presented in Appendix \ref{section:A}, this yields the equality conditions for any $\alpha\geq 0$, but a completely sound PDE proof for $\alpha\in[-1/n,0)$ remains out of our reach in this paper, as we will explain in detail later.

%{\color{blue} Summarizing,} 
In summary, our major contributions in this paper are new proofs, purely based on PDE methods, of Theorem~\ref{thm:1.1} for $\alpha\leq 0$ and of Theorem~\ref{thm:1.2} for continuous compactly supported functions in the case $\alpha=0$. 
We hope that our techniques can help to shed new light on the challenging question of stability.

%\medskip

% i.e. when restricted to PL.

\subsection{A sketch of the argument for PL ($\alpha=0$)}\label{sec:pl-sketch}
%Before going into more details for the generic case  a general $\alpha$, % the generic case $\alpha\in -1/n,0]$, 
Let us first give a brief sketch of the PDE argument in the case of PL, i.e., $\alpha=0$. While the PDE proof of this inequality in \cite{BaCo} additionally employs a stochastic interpretation of linear diffusion equations, we only utilize PDE tools, applying the viscosity solution theory. 

For $\phi\in L^1(\rn)$, let us consider the problem
\begin{equation}\label{eq:1.6}
\left\{\begin{array}{ll}
\partial_t u=\Delta u\quad&\mbox{in}\quad{\mathbb R}^n\times(0,\infty),\\
u(\cdot,0)=\phi\quad&\mbox{in}\quad{\mathbb R}^n, 
\end{array}\right.
\end{equation}
and its solution given by
%\begin{equation}
%\label{eq:1.3}
\[
u(x,t)=(e^{t\Delta_{{\mathbb R}^n}}\phi)(x):=(4\pi t)^{-\frac{n}{2}}\int_{{\mathbb R}^n}e^{-\frac{|x-y|^2}{4t}}\phi(y)\,dy. 
%\end{equation}
\]
Assume that $f$, $g$, $h$ are nonnegative functions in $L^1({\mathbb R}^n)$. 
Let $u_0$, $u_1$, $u_\lambda$ be the solutions with initial values $f, g, h$, respectively, that is, 
$$
u_0(x,t)=(e^{t\Delta_{{\mathbb R}^n}}f)(x), \quad u_1(x,t)=(e^{t\Delta_{{\mathbb R}^n}}g)(x), \quad u_\lambda(x,t)=(e^{t\Delta_{{\mathbb R}^n}}h)(x).
$$
Note that Lebesgue's dominated convergence theorem implies that 
\begin{equation}\label{eq:1.7}
\begin{split}
 & \lim_{t\to\infty}(4\pi t)^{\frac{n}{2}}u_0(x,t)=\int_{{\mathbb R}^n}f(y)\,dy, \quad
\lim_{t\to\infty}(4\pi t)^{\frac{n}{2}}u_1(x,t)=\int_{{\mathbb R}^n}g(y)\,dy, \\
 & \lim_{t\to\infty}(4\pi t)^{\frac{n}{2}}u_\lambda(x,t)=\int_{{\mathbb R}^n}h(y)\,dy,
 \quad\mbox{for every $x\in{\mathbb R}^n$}.
\end{split}
\end{equation}
We further set
$$
\ol{u}_\lambda(x,t):=\sup\left\{u_0(y,t)^{1-\lambda}u_1(z,t)^\lambda\,:\, (1-\lambda)y+\lambda z=x\right\},
$$
and notice that, for all $x\in\rn$, 
$$
\ol{u}_\lambda(x,0)=\sup\left\{ f(y)^{1-\lambda} g(z)^\lambda\,:\, (1-\lambda)y+\lambda z=x\right\}\leq h(x), 
$$
thanks to \eqref{eq:1.1}. Now the key step consists in showing that
{\it $\ol{u}_\lambda$ is a viscosity subsolution of~\eqref{eq:1.6}}, with initial datum $\phi=\ol{u}_\lambda(x,0)$. Then, by the {\it comparison principle} we have
$$
\ol{u}_\lambda\leq u_\lambda\quad\text{in }\rn\times[0,\infty), 
$$
whence
$$
(4\pi t)^{\frac{n}{2}}u_\lambda(x,t)\geq (4\pi t)^{\frac{n}{2}}\ol{u}_\lambda(x,t)\ge \left[(4\pi t)^{\frac{n}{2}}u_0(x,t)\right]^{1-\lambda}\left[(4\pi t)^{\frac{n}{2}}u_1(x,t)\right]^\lambda
$$
for all $x\in\rn$ and $t>0$. This implies that
$$
(4\pi t)^{\frac{n}{2}}u_\lambda(x,t)\ge \left[(4\pi t)^{\frac{n}{2}}u_0(x,t)\right]^{1-\lambda}\left[(4\pi t)^{-\frac{n}{2}}u_1(x,t)\right]^\lambda. 
$$
We then obtain PL by  letting $t\to+\infty$ for a fixed $x\in \R^n$ and using \eqref{eq:1.7}.

One technical issue in the argument above lies at possible insufficient regularity of initial data $f, g, h\in L^1(\R^n)$ to justify the application of comparison principle. This can be overcome by approximations via continuous functions. A similar difficulty also appears in \cite{BaCo} with a suitable approximation briefly mentioned. We will give full details about our approximation technique. 

%\medskip

%\medskip 

We prove the general BBL with essentially the same argument as for PL, resting on suitable parabolic equations depending on $\alpha$. Hereafter we elaborate more details of our proof, including its delicate aspects. 

\subsection{Our PDE proof for BBL with $\alpha< 0$}
%As we already said, the available proofs of the Borell--Brascamp--Lieb inequality or the Pr\'ekopa--Leindler inequality generally adopt techniques from convex analysis and geometry or optimal transportation techniques, and often treat first the one dimensional case and then argue by induction on the dimension.
%In this work we give an alternative PDE-based proof for these inequalities. A similar idea has appeared in the paper \cite{Bor2} to show Ehrhard's inequality by using a linear parabolic equation. 
%Other related works are \cite{BCCT, CLL}, where the authors also utilize the heat equation to prove the so-called Brascamp--Lieb inequality. We must emphasize that despite the similarity in their names, the Brascamp--Lieb inequality studied in \cite{BCCT} is very different from the Borell--Brascamp--Lieb inequality as in Theorem \ref{thm:1.1}, which is the main topic of this paper. 
%In addition to the difference from these references in the target inequality, our PDE approach is not limited to linear equations but involves more analysis of nonlinear parabolic problems. 
%Our PDE-based proof of Theorem \ref{thm:1.1} cover all $n\geq 1$ and treat explicitly the case $\alpha\in [-1/n, 0]$. 
 Our argument for the case $\alpha<0$ is inspired by the power concavity of solutions to the nonlinear singular parabolic equation of porous medium type
\begin{equation}\label{eq:1.8}
\partial_t u={1\over m}\Delta (u^m) \quad \text{in $Q:=\R^n\times (0, \infty)$}
\end{equation}
with initial value 
\begin{equation}\label{eq:1.9}
u(\cdot, 0)=\phi \quad \text{in $\R^n$,}
\end{equation}
where $m$ is an exponent in the range $(-1, 1]$ and $\phi\in L^1(\R^n)\cap L^\infty(\R^n)$ is nonnegative and continuous. 
Here, $u^m/m$ should be understood as $\log u$ when $m=0$. 
Equation \eqref{eq:1.8} clearly reduces to the heat equation when $m=1$, otherwise the phenomenon underlying equation \eqref{eq:1.8} is usually called fast diffusion when $0<m<1$ and superfast diffusion when $m<0$.
The relation between the parameter $\alpha$ in Theorem~\ref{thm:1.1} and the exponent $m$ in \eqref{eq:1.8} is 
\begin{equation}\label{eq:1.10}
\alpha={m-1\over 2} \quad\text{ or equivalently } \quad m=2\alpha+1.
\end{equation}
In particular, the case of PL %\textcolor{blue}{\sout{the Pr\'ekopa--Leindler inequality}} 
($\alpha=0$) corresponds to the heat equation ($m=1$) on the PDE side, as we have already seen.
%Our choice of $m$ in this work actually depends on the space dimension $n$. 

In this work, in addition to the constraint $-1<m\leq 1$, we shall always focus on the so-called supercritical case, that is, 
\begin{equation}\label{eq:1.11}
n(m-1)+2>0. 
\end{equation}
%We therefore take $0<m\leq 1$ for $n\geq 1$ and $-1<m\leq 1$ for $n=1$. 
%Moreover, throughout this work we shall always assume that $\phi\not\equiv 0$ in $\R^n$, since, in our PDE proof of the Borell--Brascamp--Lieb inequality with $\alpha\in [-1/n, 0]$, we will take $\phi$ to be the functions $f, g, h$. 
 Well-posedness results for the Cauchy problem \eqref{eq:1.8}--\eqref{eq:1.9} in these circumstances can be found in \cites{ERV, HP, VaBook}. 
 
Notice that \eqref{eq:1.10} and \eqref{eq:1.11} imply $\alpha>-1/n$. We will get the limit case $\alpha=-1/n$ by approximation.
Moreover, in our analysis, we will often need to distinguish two different ranges of $m, n$: a) $0<m\leq 1$, $n(m-1)+2>0$ and b) $n=1, -1<m\leq 0$. This is due to different behaviors of the solutions, but the general structure of our proof for both cases is similar to the one for PL and consists of the following three steps. 
\begin{itemize}
\item[Step 1] 
We establish a concavity comparison between  solutions of \eqref{eq:1.8} with different initial data in even different domains, 
using the so-called spatial Minkowski convolution (see Sections~\ref{section:2} and \ref{section:3}). 
\item[Step 2] 
Based on the result obtained in Step 1, we use the large-time behavior of the Cauchy problem \eqref{eq:1.8}--\eqref{eq:1.9} to prove Theorem~\ref{thm:1.1} when $f, g, h\in C(\R^n)\cap L^1(\R^n)$ and satisfy also some other regularity assumptions (see Section~\ref{subsection:4.1}). 
\item[Step 3] 
We drop the continuity and the other extra assumptions on $f, g, h$ used in the result of Step 2 by appropriate approximations (see Section~\ref{subsection:4.2}). 
\end{itemize}
 \medskip
 
Let us now roughly describe our strategy of Step 1, which is crucial for our argument. 
Let $u_i\in C(\R^n\times [0, \infty))$ be a positive bounded classical solution of \eqref{eq:1.8} associated to the initial value $\phi_i$, with $i\in \{0, 1, \lambda\}$. For $\alpha=(m-1)/2$ and a fixed $\lambda\in (0, 1)$, let $\ol{u}_{\lambda}$ be the spatial $\alpha$-Minkowski convolution of $u_0, u_1$, defined by 
\begin{equation}\label{eq:1.12}
%%label{eq:4.2}%
%\begin{split}
\begin{aligned}
  \ol{u}_{\lambda}(x, t):=\sup\left\{ 
M_{\alpha}\left(u_0(y, t), u_1(z, t);\lambda\right)\,
:\,  y, z\in \R^n, 
\,  x =(1-\lambda)y+  \lambda z
\right\} 
\end{aligned}
%\end{split}
\end{equation}
for $(x, t)\in Q$.
Under suitable assumptions, we prove that $u_\lambda\geq \ol{u}_\lambda$ in $\R^n\times [0, \infty)$ if the initial values satisfy
\begin{equation}\label{eq:1.13}
\phi_\lambda((1-\lambda)y+\lambda z)\geq M_\alpha(\phi_0(y), \phi_1(z); \lambda)
\end{equation}
for all $y, z\in \R^n$. A more precise statement is as follows. 
\begin{thm}
\label{thm:1.3}
Let $n\geq 1$, $0<m\leq 1$ with $n(m-1)+2>0$ and $\lambda\in(0,1)$. Let $\alpha=(m-1)/2$.
Assume that $\phi_0$, $\phi_1$, $\phi_\lambda$ are nonnegative bounded functions in $L^1(\R^n)\cap C(\R^n)$ such that \eqref{eq:1.13} 
holds for all $y, z\in \R^n$. 
For each $i\in\{0, 1, \lambda\}$, 
let $u_i$ be a solution to the Cauchy problem \eqref{eq:1.8}--\eqref{eq:1.9} with $\phi=\phi_i$. 
Then 
\begin{equation}\label{eq:1.14}
u_{\lambda}((1-\lambda)y+\lambda z, t)
\ge  M_\alpha(u_{0})(y, t), u_{1}(z, t); \lambda)
\end{equation}
holds for all $y$, $z\in{\mathbb R}^n$ and $t>0$. 
\end{thm}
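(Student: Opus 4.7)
The plan is to read the pointwise inequality \eqref{eq:1.14} as a comparison, in $Q$, between the Minkowski sup-convolution $\overline{u}_\lambda$ defined in \eqref{eq:1.12} and the true solution $u_\lambda$. The hypothesis \eqref{eq:1.13} says precisely that $\overline{u}_\lambda(\cdot,0)\le\phi_\lambda=u_\lambda(\cdot,0)$, so it is enough to prove that $\overline{u}_\lambda$ is a viscosity subsolution of \eqref{eq:1.8} in $Q$ and then invoke a comparison principle for bounded continuous viscosity sub/supersolutions of \eqref{eq:1.8}.

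The core of the argument is the subsolution property, which I would prove by a test-function computation. To ensure that the supremum defining $\overline{u}_\lambda$ is attained and that all manipulations are classical, I would first regularize: replace $\phi_0,\phi_1,\phi_\lambda$ by smooth, strictly positive approximations with good decay at infinity, slightly enlarging $\phi_\lambda$ so that \eqref{eq:1.13} is preserved. In the supercritical range \eqref{eq:1.11} such data yield positive classical solutions $u_0,u_1,u_\lambda$ of \eqref{eq:1.8}, and one can recover the original statement at the end by stability of viscosity subsolutions under locally uniform convergence together with continuous dependence of \eqref{eq:1.8} on the initial data. In this regularized setting, fix a smooth $\varphi$ touching $\overline{u}_\lambda$ from above at $(x^*,t^*)\in Q$, pick a maximizer $(y^*,z^*)$ with $(1-\lambda)y^*+\lambda z^*=x^*$ in \eqref{eq:1.12}, and observe that
\[
(y,z,t)\mapsto M_\alpha(u_0(y,t),u_1(z,t);\lambda)-\varphi((1-\lambda)y+\lambda z,t)
\]
attains a local maximum at $(y^*,z^*,t^*)$. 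The resulting first- and second-order conditions couple $\partial_t\varphi,D_x\varphi,D_x^2\varphi$ at $(x^*,t^*)$ with $\partial_tu_i,Du_i,D^2u_i$ at $(y^*,t^*)$ and $(z^*,t^*)$. Substituting the time derivatives via \eqref{eq:1.8} and simplifying through the change of variables $v_i=u_i^{2\alpha}$ (respectively $v_i=\log u_i$ when $\alpha=0$), which turns $M_\alpha$ into an ordinary arithmetic mean, one sees that the relation $m=2\alpha+1$ is exactly what forces the spatial second-order contributions from $u_0$ and $u_1$ to reassemble into $\tfrac{1}{m}\Delta(\varphi^m)(x^*,t^*)$ and to dominate $\partial_t\varphi(x^*,t^*)$. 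This is the same power-concavity preservation mechanism exploited in \cites{IshLS,IshSaTa2}.

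Once the subsolution property holds for the regularized data, a comparison principle gives $\overline{u}_\lambda\le u_\lambda$ in $\rn\times[0,T]$ for every $T>0$; the comparison itself, for bounded continuous viscosity solutions of \eqref{eq:1.8} in the supercritical range, follows from a standard doubling-of-variables argument in which the boundedness and $L^1$-integrability of the solutions are used to control behavior at infinity. Letting the regularization parameter go to zero then yields \eqref{eq:1.14} for the original data.

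The main obstacle I anticipate is the second-order algebra at the contact point $(y^*,z^*,t^*)$: verifying rigorously that $\alpha=(m-1)/2$ is critical, and that the matrix inequality obtained from $D^2$ of the auxiliary Lagrangian closes up correctly against the Hessian of $\varphi$, requires a delicate manipulation of $M_\alpha$ that degenerates when either $u_0(y^*,t^*)$ or $u_1(z^*,t^*)$ approaches zero. The reduction to strictly positive data bypasses this, but the subsequent passage to the limit is itself delicate in the singular subrange $n=1$, $-1<m\le 0$, where superfast diffusion can lead to finite-time extinction; there the comparison must first be carried out on a time interval strictly before extinction and then extended.
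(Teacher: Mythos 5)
Your overall strategy --- show that the Minkowski sup-convolution $\overline{u}_\lambda$ is a viscosity subsolution of \eqref{eq:1.8} and then compare it with $u_\lambda$ --- is exactly the paper's, and your test-function computation at a maximizer $(y^*,z^*,t^*)$ is the same mechanism as Theorem~\ref{thm:2.2} (where it is carried out via the Crandall--Ishii lemma; for classical regularized solutions it simplifies as you describe). However, there is a genuine gap in the comparison step. You invoke a whole-space comparison principle for \eqref{eq:1.8} ``by a standard doubling-of-variables argument,'' but for $0<m<1$ the operator $F(r,\theta,A)=-r^{m-1}\operatorname{trace}A-(m-1)r^{m-2}|\theta|^2$ is singular and non-Lipschitz in $r$ as $r\to 0$, and it is not proper (not monotone in $r$), so the standard viscosity comparison machinery does not apply off the shelf. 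Regularizing the initial data to be strictly positive does not rescue this: for integrable data the solution of the Cauchy problem decays to $0$ at spatial infinity for every $t>0$, so you cannot keep the solutions uniformly bounded away from the singular set $\{u=0\}$ on all of $\R^n$. This is precisely why the paper does \emph{not} compare in the whole space; it first localizes to balls $B(0,k)$ with zero Dirichlet data, proves comparison there against a classical supersolution lifted to boundary value $\delta>0$ (Proposition~\ref{thm:3.2} and Remark~\ref{rmk comparison}, where positivity of the supersolution on the \emph{compact} closure gives the needed local Lipschitz continuity of $r\mapsto F(r,\cdot,\cdot)$), handles the resulting finite-time extinction, and only then exhausts $\R^n$, identifying the limit via the Herrero--Pierre uniqueness theorem. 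To make your route rigorous you would have to either supply a bona fide whole-space comparison theorem for the singular range $0<m<1$ (which is not standard and is not in the paper) or fall back on the localization.

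A secondary, related issue: in the whole space the attainment of the supremum defining $\overline{u}_\lambda(x,t)$ for $\alpha<0$ requires the precise decay $u_i(x,t)=O(|x|^{1/\alpha})=O(|x|^{2/(m-1)})$ (cf.\ Remark~\ref{thm:2.4} and condition \eqref{eq:2.15}); ``good decay'' of the regularized data must be engineered to produce exactly this rate for the solutions, which is a nontrivial part of the paper's Section~\ref{subsection:4.2} construction and, in the Dirichlet setting, is replaced by the blow-up of $u_i^\alpha$ at $\partial\Omega_i$. Two minor points: the change of variables should be $v_i=u_i^{\alpha}$ (not $u_i^{2\alpha}$), and the superfast range $n=1$, $-1<m\le 0$ you worry about at the end is not part of Theorem~\ref{thm:1.3}, which assumes $0<m\le 1$; that case is treated separately (Theorem~\ref{thm:3.10}).
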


In order to obtain \eqref{eq:1.14}, we show that $\ol{u}_\lambda$ is a subsolution of \eqref{eq:1.8} and then we apply the comparison principle. This program can be carried out rigorously if we adopt the viscosity solutions theory. Moreover, instead of studying the Cauchy problem \eqref{eq:1.8}--\eqref{eq:1.9} directly, we first consider the following Cauchy--Dirichlet problem in bounded smooth domains:
\begin{numcases}{}
%  \left\{\begin{array}{ll}
  \partial_tu={1\over m}\Delta (u^m)&\text{in $\Omega\times (0, \infty)$,}\label{eq:1.15}\\
    u=0& \text{on $\partial \Omega\times (0, \infty)$,}\label{eq:1.16}\\
  u(\cdot,0)=\phi &\text{in $\Omega$.}\label{eq:1.17}
%  \end{array}\right.
  \end{numcases}
Considering the solution $u_i$ associated to a smooth domain $\Omega_i\subset \R^n$ and positive initial datum $\phi_i\in C(\ol{\Omega_i})$ for $i=0, 1$, 
we show that the spatial Minkowski convolution $\ol{u}_\lambda$ is a subsolution of $u_0$, $u_1$ (extended by $0$ outside of $\Omega_0$ and $\Omega_1$, respectively) is a subsolution of the same problem in the domain $(1-\lambda)\Omega_0+\lambda \Omega_1$ with initial data $\phi_\lambda$, satisfying \eqref{eq:1.13}.  The setting of Cauchy--Dirichlet problem facilitates our application of the standard comparison principle to get \eqref{eq:1.14} for all $y\in \Omega_0$, $z\in \Omega_1$ and $t>0$. Once the result is proved for bounded domains, the inequality for all $y, z\in \R^n$ is then obtained via approximation. See more details in Section~\ref{section:3}. %\textcolor{blue}{\sout{This is done in Section~\ref{section:3}.}}

It is worth remarking that, although we impose positive initial data for the Cauchy--Dirichlet problem, 
we still need to consider nonnegative solutions rather than positive solutions. 
It is indeed well known that when $0<m<1$, there is an extinction time $T>0$, that is, % \textcolor{blue}{\sout{such that}} 
the solution $u>0$ in $\Omega\times (0, T)$ 
and $u\equiv 0$ in $\Omega\times [T, \infty)$, see e.g., \cites{DiDi, Sab1, Sab2}. In Section~\ref{section:2}, we provide a more general version of the subsolution property %Theorem \ref{thm:3.1} 
that applies to viscosity solutions of fully nonlinear equations that are positive up to the time of extinction; see more details in Theorem~\ref{thm:2.2}. 

%Utilizing Theorem \ref{thm:3.8} and Theorem \ref{thm:3.1}, we can show \eqref{eq:1.14} by adopting the standard comparison arguments. 

In the case $n=1$ and $-1<m<0$, we obtain a counterpart of Theorem~\ref{thm:1.3} under additional regularity assumptions on $\phi_i$, 
stated more precisely in Theorem~\ref{thm:3.8}. The changes are due to different properties of the superfast diffusion: 
in order to ensure that the supremum in \eqref{eq:1.12} is indeed a maximum, which is attained at some $y, z \in \R$, 
we utilize a certain decay of solutions at the space infinity from  \cite{ERV}. 
The borderline case $m=0$ can be handled via approximation as $m\to 0-$. 

In Step 2 we combine the comparison results of the previous step with the large-time behavior of solutions of \eqref{eq:1.8}--\eqref{eq:1.9} to deduce the desired relation between $L^1$ norms of the initial values. The limits in \eqref{eq:1.7} are generalized to the other cases (under an appropriate decay condition on a solution $u$ at space infinity) as follows:
%
%In fact, in the case of $\alpha=0$ and thus $m=1$, it is a straightforward consequence of the solution formula of the heat equation 
% \[
% u(x, t)=(4\pi t)^{-n\over 2} \int_{\R^n} \exp\left(-{|x-y|^2\over 4t}\right)u(y, 0)\, dy, \quad (x, t)\in \R^n\times [0, \infty)
% \]
%that 
 %\[
 %t^{n\over 2}u(x, t)\to (4\pi)^{n\over 2} \|u(x, 0)\|_{L^1(\R^n)}\quad \text{as $t\to \infty$.}
 %\]
 %Then  \eqref{eq:1.14} together with the large-time asymptotics immediately yields 
 %\[
 %\|\phi_\lambda\|_{L^1(\R^n)}\geq \|\phi_0\|_{L^1(\R^n)}^{1-\lambda} \|\phi_0\|_{L^1(\R^n)}^\lambda, 
 %\]
%which completes our proof for the Pr\'ekopa--Leindler inequality. The same program  works for the general Borell--Brascamp--Lieb inequality as well. %A well-known %result concerning the large time asymptotics for \eqref{eq:1.8} states that,
%It is well understood that, 
\begin{equation}\label{eq:1.18}
%\textcolor{red}{
\lim_{t\to\infty}t^{n\over d} u(0, t)=C \|u(\cdot, 0)\|_{L^1(\R^n)}^{2\over d}
\end{equation}
%\textcolor{red}{
for some $C>0$ depending only on $m$, $n$. Here $d:=n(m-1)+2=2(n\alpha+1)>0$.
This result was %\textcolor{blue}{\sout{is}} 
first established in \cite{FKa} under the condition $m>1$. 
See \cite{VaBook}*{Theorem~18.34} for the case of $0<m<1$, $n\geq1$, and \cite{ERV}*{Theorem~5} for the case of $-1<m\leq 0$, $n=1$. 
Now taking $\phi_0=f$, $\phi_1=g$ and $\phi_\lambda=h$, we can use \eqref{eq:1.18} to get, at least formally, that
\[
\|h\|_{L^1(\R^n)}^{2\over d}\geq M_\alpha\left(\|f\|_{L^1(\R^n)}^{2\over d}, \|g\|_{L^1(\R^n)}^{2\over d}; \lambda\right),
\]
which implies the desired BBL. % \textcolor{blue}{\sout{Borell--Brascamp--Lieb inequality}}. 

%It actually does hold for the superfast diffusion. %, that is, $n=1$, $-1<m<0$. %, for which \eqref{eq:1.8} reduces to
%\begin{equation}%label{pm-1d}%
%u_t-(u^{m-1} u_x)_{x}=0 \quad \text{in $\R\times (0, \infty)$.}
%\end{equation}
%It is known if the nonnegative initial value $\phi\in L^1(\R)$ , then the Cauchy problem \eqref{eq:1.8}\eqref{eq:1.9} with $n=1$, $-1<m<0$ admits a unique positive classical solution preserving the total mass and satisfying an appropriate decay condition.

%The case $0<m\leq 1$ with general $n\geq 1$ is more involved due to different behavior of solutions to \eqref{eq:1.8}. 

%Taking $\phi_0=f$, $\phi_1=g$ and $\phi_\lambda=h$ and applying the large time behavior \eqref{eq:1.18} described above, we are able to complete the proof of Theorem \ref{thm:1.1} for $f, g, h\in L^1(\R^n)\cap C(\R^n)$ (with other regularity conditions in the case $n=1, -1<m<0$). 

The last step  consists in dropping the additional regularity assumptions by implementing an appropriate regularization. This is necessary even in the case of $\alpha=0$, as pointed out previously. In Section \ref{subsection:4.2}, with full details, we carefully construct smooth positive approximations of $\phi_i\in L^1(\R^n)$ that still satisfies \eqref{eq:1.13}. 
%\medskip

\subsection{Our PDE proof for equality condition of PL}
After proving BBL, % \textcolor{blue}{\sout{the Borell--Brascamp--Lieb inequality}}, 
in Section \ref{section:5} we further develop our PDE approach to investigate Theorem~\ref{thm:1.2}. 
%The strategy is as follows: b
By considering the solution $u_i$ of \eqref{eq:1.8} ($i\in \{0, 1, \lambda\}$) 
with continuous initial data $f$, $g$, $h$, % \textcolor{blue}{\sout{$\phi_0=f$, $\phi_1=g$, $\phi_\lambda=h$,}} 
respectively, 
we can use the equality condition
\begin{equation}\label{eq:1.19} 
\int_{\R^n}h(x)\,dx=
M_{\frac{\alpha}{1+n\alpha}}\left(\int_{\R^n} f(x)\,dx, \int_{\R^n} g(x)\,dx; \lambda\right)
\end{equation}
to deduce that 
$u_\lambda=\ol{u}_\lambda$ in $\R^n\times (0, \infty)$, where $\ol{u}_\lambda$ still stands for the Minkowski convolution as in \eqref{eq:1.12}, and this forces such strong relations between the Hessian matrices and the gradients of $u_0$, $u_1$ and $u_\lambda$ to yield the desired equality conditions for $f,g$ and $h$.

%Regarding the equality case, still taking PL as paradigmatic case, the argument is quite delicate and it is essentially based on the fact that equality in \eqref{eq:1.2} forces $\ol{u}_\lambda$ to be in fact a solution (not just a subsolution) of the heat equation.

\begin{comment}
[When $\alpha=0$ ($m=1$) and the involved functions are assumed to be log-concave, from a geometric point of view, {\color{blue} the relation between $u_0$ and $u_1$ descending from} $u_\lambda=\ol{u}_\lambda$ means that the epigraphs of the logarithms of both functions are convex sets with equal principal curvatures at points with the same normal directions. 
{\color{blue}Then, it} would follow from the uniqueness of solutions to the so-called Minkowski problem that they {\color{blue}must} be translate of each other. However, the required uniqueness seems not completely understood in the unbounded case.
{\color{blue}Moreover, we are especially interested in avoiding the log-concavity assumption. Then, we argue in a different way, which permit us to prove the equality conditions of PL in full details
for generic functions, }] 
\end{comment}

Under the additional assumption that $f, g$ and $h$ are continuous compactly supported functions, we can exploit the eventual (strong) log-concavity of solutions of the heat equation established in \cite{LV1} (see also \cites{Ish-pre23, IshSaTa1}), 
finding $t_\ast>0$ such that $\log u_i(\cdot, t_\ast)$ for $i=0, 1, \lambda$ are strongly concave in~$\R^n$. 
Through the concave conjugates of $\log u_i$, we then build the connection 
\begin{equation}\label{eq:1.20}
u_0(x, t_\ast)=k u_1(x+\eta, t_\ast)=k^\lambda u_\lambda (x+\lambda \eta, t_\ast)\quad \text{for all $x\in \R^n$}
\end{equation}
with some $k>0$ and $\eta\in \R^n$. An application of the backward uniqueness of the heat equation (see for example \cite{EKPV})  completes this last step and yields the log-concavity of $u_i$ along with the equality condition \eqref{eq:1.5} with $\alpha=0$ for PL. %\textcolor{blue}{\sout{the Pr\'ekopa--Leindler inequality}}.

The general case with $\alpha\in [-1/n, 0$) is not entirely clear for the moment. We can repeat the same process as in the case of $\alpha=0$ to observe that $\ol{u}_\lambda=u_\lambda$, but we face new challenges in rigorously completing the rest of the  arguments with the fast diffusion equation \eqref{eq:1.8}. 
While a result on eventual power concavity for fast diffusions is available in \cite{LV1}*{Theorem~6.1} under certain decay conditions on the initial value, to the best of our knowledge, its backward uniqueness remains open. 
Moreover, even in the case $\alpha=0$, it is not clear if we can drop the assumption of the compact support of $\phi_i$, which is needed in our proof to apply the known results on eventual log-concavity of solutions. Appropriate approximations of $f, g, h$ with compactly supported functions that preserve both \eqref{eq:1.3} and \eqref{eq:1.19} may serve the purpose. 
It would be interesting to explore all these questions to establish a thorough PDE approach to BBL %\textcolor{blue}{\sout{the Borell--Brascamp--Lieb inequality}} 
including its equality condition.

Let us quickly emphasize that we could generalize our arguments for the case involving the mean of more than two functions, as we do in Section \ref{section:2} for the basic technical results, but we omit the details here and leave them to the reader.

We conclude the introduction with a brief remark on topics related to optimal transport. 
Notably, the papers~\cites{Mc1, Mc2, Tr} have made significant contributions of establishing proofs of BBL %\textcolor{blue}{\sout{the Borell--Brascamp--Lieb}} 
and PL %\textcolor{blue}{\sout{Pr\'ekopa--Leindler inequalities}} 
using mass transportation. See also \cite{CMSc} for a generalized approach on Riemannian manifolds. As highlighted in \cite{Mc2}, the functional inequalities can be understood as displacement convexity of internal energy of an interacting gas model; related discussions can also be found in \cite{Gar}. Interestingly, using the tool of gradient flows, 
the paper~\cite{Ot} elucidates the relation between such types of energy and the large-time behavior of porous medium equation. As the latter plays an important role in this work, our proof therefore can be viewed as a direct PDE expression of the optimal transport interpretation for the inequalities, mirroring the connection found in \cite{Ot}.

\section*{Acknowledgments}
The authors would like to thank Goro Akagi, Jenn-Nan Wang, Masahiro Yamamoto, Eiji Yanagida for valuable discussions and helpful references. 

K.I. was supported in part by JSPS KAKENHI Grant No.~19H05599.
Q.L. was supported in part by JSPS KAKENHI Grants No.~19K03574, No.~22K03396.
P.S. is supported by the project "Geometric-Analytic Methods for PDEs and Applications (GAMPA)", funded by European Union within the Next Generation EU PRIN 2022 program (D.D. 104 - 02/02/2022 Ministero dell’Universit\`a e della Ricerca).
 %%%%%%%%%%%%%%
%
%%%%%%%%%%%%%%%%%%%%%%%%%%%%%%%%%%%%%%
%%%%%%%%%%%%%%%%%%%%%%%%%%%%%%%%%%%%%%
\section{Spatial Minkowski convolution}\label{section:2}
%%%%%%%%%%%%%%%%%%%%%%%%%%%%%%%%%%%%%%
%%%%%%%%%%%%%%%%%%%%%%%%%%%%%%%%%%%%%%
%Let $\{\Omega_i\}_{i=1}^k$ be bounded smooth domains in $\R^n$ 
%and $\Omega_\lambda$ the domain defined by~%eqref{eq:1.1ax}%, 
%where $n\ge 1$, $k\ge 2$, and $\lambda\in\Lambda_k$.
Our main result on spatial Minkowski convolution in this section is applicable to viscosity solutions of a general class of nonlinear parabolic equations. See also \cite{IshLS} for related results and applications about the spatiotemporal Minkowski convolution.

First of all, it is useful to generalize the notion of power means of two numbers to powers means a given set of nonnegative numbers. 
Let $k=2,3,\dots$ and
\[
\Lambda_k:=\left\{(\lambda_1,\dots,\lambda_k)\in[0,1]^k\,:\,\sum_{i=1}^k\lambda_i=1\right\}.
\]
For any $\alpha \in [-\infty, +\infty]$, $a=(a_1, \ldots, a_k)\in [0, \infty)^k$ and $\lambda=(\lambda_1, \ldots, \lambda_k)\in \Lambda_k$, 
we define 
\[
M_\alpha(a; \lambda)=\begin{cases}
(\lambda_1 a_1^\alpha+\cdots +\lambda_k a_k^\alpha)^{1\over \alpha}  & \mbox{for}\quad \alpha\not\in\{0, \pm \infty\},\\
\prod_{i=1}^ka_i^{\lambda_i}& \mbox{for}\quad\alpha=0, \\
\min\{a_1, \ldots, a_k\} & \mbox{for}\quad\alpha=-\infty, \\
\max\{a_1, \ldots, a_k\} & \mbox{for}\quad\alpha=+\infty,
\end{cases}
\quad\mbox{if}\quad \prod_{i=1}^ka_i> 0\,
\]
and $M_\alpha(a; \lambda)=0$ otherwise.
In the case $k=2$, for any $\lambda\in[0,1]$,  
we use $M_\alpha(a; \lambda)$ to denote $M_\alpha(a; (1-\lambda, \lambda))$ for $a=(a_1, a_2)\in[0,\infty)^2$ and $\alpha\in [-\infty, +\infty]$.
%We can now recall that, given $\alpha\in[-\infty,+\infty]$, a non negative function $v$  is said {\it $\alpha$-concave} in $\rn$ if for any 
%$\lambda=(\lambda_1,\dots,\lambda_{n+1})\in\Lambda_{n+1}$ and $x_1,\dots,x_{n+1}\in\rn$ it holds
%$$
%v(\sum_{i=1}^{n+1}\lambda_ix_i)\geq M_\alpha(v(x_1),\dots,v(x_{n+1});\lambda). 
%$$
%It is not hard to see that this is equivalent to the following:\\ 
%the positivity set $\Omega^+=\{x\in\rn\,:\,v(x)>0\}$ of $v$ is convex and 
%$$\left\{
%\begin{array}{ll}
%v\,\text{is a positive constant in }\Omega^+,&\text{ if }\alpha=+\infty;\\
%v^\alpha\,\,\text{ is concave in }\Omega^+,&\text{ if }\alpha>0;\\
%\log v\,\,\text{ is concave in }\Omega^+,&\text{ if }\alpha=0;\\
%v^\alpha\,\,\text{ is convex in }\Omega^+,&\text{ if }\alpha<0;\\
%\text{every superlevel set of }v\text{ is convex},&\text{ if }\alpha=-\infty. 
%\end{array}\right.
%$$
%When $\alpha=0$, $v$ is also said {\it log-concave}; when $\alpha=-\infty$, $v$ is also said {\it quasi-concave}. 
%Given a proper subset $\Omega$ of $\rn$, we say that $v:\Omega\to[0,+\infty)$ is $\alpha$-concave in $\Omega$ is the function $v^*$ obtained extending $v$ as $0$ outside of $\Omega$ is $\alpha$-concave in $\rn$.

Let us now briefly recall % \textcolor{blue}{\sout{go over}} 
preliminaries about viscosity solutions of fully nonlinear parabolic equations
\begin{equation}\label{eq:2.1}
\partial_tu+F(x,t,u,\nabla u,\nabla^2 u)=0\quad \text{in $\O$,}
\end{equation}
where $\O$ is a given domain in $\R^n\times (0, \infty)$. For our purpose in this paper, we only consider positive solutions and take $F\in C(\O\times (0, \infty) \times{\mathbb R}^n\times\S^n)$ to be an elliptic operator, that is,
\[
F(x, t, r, \eta, X)\leq F(x, t, r, \eta, Y)
\]
holds for any $(x, t)\in \O$, $r>0$, $\eta\in \R^n$ and $X, Y\in \S^n$ satisfying $X\geq Y$. 
Here $\S^n$ denotes the set of all $n\times n$ symmetric matrices. Next, we recall the definition of viscosity solutions. We denote by $USC(\O)$ and $LSC(\O)$ the sets of upper and lower semicontinuous functions in $\O$, respectively.

\begin{defi}\label{thm:2.1}
A locally bounded function $u\in USC(\O)$ {\rm ({\it resp.~$u\in LSC(\O)$})} is a viscosity subsolution 
{\rm ({\it resp.~supersolution})} of \eqref{eq:2.1} if whenever there exist $\varphi\in C^2(\O)$ and $(x_\ast, t_\ast)\in \O$ such that $u-\varphi$ attains a maximum {\rm ({\it resp.~minimum})} in $\O$ at $(x_\ast, t_\ast)$, we have 
\[
\partial_t \varphi(x_\ast, t_\ast)+F(x_\ast, t_\ast, u(x_\ast, t_\ast), \nabla \varphi(x_\ast, t_\ast), \nabla^2 \varphi(x_\ast, t_\ast))\leq 0 \quad\text{{\rm ({\it resp.~$\geq 0$})}}.
\]
A function $u\in C(\O)$ is called a viscosity solution if it is both a viscosity subsolution and a viscosity supersolution. 
\end{defi}

\begin{rem}\label{rmk jets}
In the definition above, the global extremum in $\O$ can be replaced by a local extremum. Also, one can equivalently define viscosity solutions through the notion of semijets $P^{2, \pm} u$ and their closure $\ol{P}^{2, \pm} u$. Recall that, for any fixed $(x, t)\in \O$ and $u\in USC(\O)$, $P^{2, +} u(x, t)$ denotes the set of all $(\tau, \xi, X)\in \R\times \R^n\times \S^n$ satisfying 
\[
\begin{aligned}
 u(y, s)\leq u(x, t)+ \tau (s-t)+\langle \xi, y-x\rangle+ {1\over 2}\langle X(y-x), (y-x)\rangle + o(|t-s|+|x-y|^2),
\end{aligned}
\]
while for $u\in LSC(\O)$ we have $P^{2, -}u(x, t)=-P^{2, +}(-u)(x, t)$. Also, we say that $(\tau, \xi, X)\in \ol{P}^{2, +}u(x, t)$ (respectively, $(\tau, \xi, X)\in\ol{P}^{2, -}u(x, t)$) if there exist $(x_j, t_j)\in \O$ and $(\tau_j, \xi_j, X_j)\in P^{2, +}u(x_j, t_j)$ (respectively, $(\tau_j, \xi_j, X_j)\in P^{2, -}u(x_j, t_j)$) such that 
\[
(x_j, t_j)\to (x, t), \quad u(x_j, t_j)\to u(x, t),\quad \text{and} \quad (\tau_j, \xi_j, X_j)\to (\tau, \xi, X)
\]
as $j\to \infty$. We refer to \cite{CIL} for details. 
%is the set of all $(\tau, \xi, X)$ such that $(-\tau, -\xi, -%X)\in P^{2, +}(-u)(x, t)$.
\end{rem}

Let $k\geq 2$ and fix $\lambda\in \Lambda_k$.
 For any $i\in\{1,\dots,k,\lambda\}$, 
let 
$
%D_i=\Omega_i\times(0,\infty),\quad
F_i\in C(\R^n\times (0, \infty)\times (0, \infty)\times{\mathbb R}^n\times\S^n)
$
be an elliptic operator.  Let $\{\Omega_i\}_{i=1}^k$  be bounded smooth domains in $\R^n$. 
We denote by $\Omega_\lambda$ the Minkowski combination (with coefficient $\lambda$) of $\{\Omega_i\}_{i=1}^k$, that is,
$$
\Omega_\lambda:=\sum_{i=1}^k\lambda_i\Omega_i
=\left\{\sum_{i=1}^k \lambda_i x_i\,:\,x_i\in\Omega_i,\,\,i=1,\dots,k\right\}.
$$
Let $T\in (0, \infty]$ and set $D_{i, T}:=\Omega_i\times(0, T)$ for $i\in\{1,\dots,k,\lambda\}$. For $i=1, \ldots, k$, assume that for any $i=1, \ldots, k$, $u_i\in %C^{2;1}(D_i)\cap 
USC({Q})$ %satisfies 
%\[
%u(x,t)>0\quad \mbox{in}\quad D_{i, T}
%\]
%and 
is a bounded nonnegative viscosity subsolution of
\begin{equation}\label{eq:2.2}
  \partial_tu+F_i(x, t, u, \nabla u, \nabla^2 u)=0 \quad\text{in $D_{i, T}$}
\end{equation}
satisfying 
\begin{equation}\label{eq:2.3}
u=0\quad  \text{on $\partial \Omega_i\times (0, T)$.}
\end{equation}
Define, for $\alpha\in (-\infty, 0]$ %$m\in[-\infty,1]$ \textcolor{red}{????} 
and $(x,t)\in\overline{D_{\lambda, T}}$, 
\begin{equation}
\label{eq:2.4}
\begin{split}
 & U_{\alpha,\lambda}(x,t)\\
 & :=\sup\biggr\{ 
M_{\alpha}\left(u_1(y_1,t),\dots,u_k(y_k,t);\lambda\right)\,
:\, y_i\in \overline{\Omega_i}\, (i=1, \ldots, k),
\,\,\,\, x=\sum_{i=1}^k \lambda_i y_i\,\,
\biggr\}. \,
\end{split}
\end{equation} 
Then the following result holds. 
\begin{thm}
\label{thm:2.2}
Let $n\ge 1$, $k\ge 2$, and $\lambda\in\Lambda_k$. Fix $T\in (0, \infty]$. Assume that for any $i=1,\dots,k$, 
$u_i\in %C^{2;1}(D_i)\cap 
USC(\ol{D_{i, T}})$ is a positive viscosity subsolution of % in $D_{i, T}$ and satisfies
\eqref{eq:2.2} satisfying \eqref{eq:2.3}. 
Let $\alpha\in (-\infty,0]$. Assume that the following condition holds. 
\begin{itemize}
\item[{\rm (F)}]
For any $i\in \{1, 2, \ldots, k, \lambda\}$, $F_i\in C(D_{i, T}\times{\mathbb R}^n\times\S^n)$ is an elliptic operator such that for any fixed $\theta\in\R^n$ and $t\in (0, T)$,
 \begin{equation*}
  {\mathcal F}^{\theta,t}_{\lambda, \alpha} \left(\sum_{i=1}^k \lambda_i x_i,\sum_{i=1}^k \lambda_i r_i,\sum_{i=1}^k \lambda_i A_i\right)
  \le \sum_{i=1}^k \lambda_i {\mathcal F}^{\theta,t}_{i, \alpha}(x_i, r_i,A_i)
 \end{equation*}
 holds for all $(x_i, r_i, A_i)\in\Omega_i\times(0, \infty)\times\S^n$ with $i=1,\dots,k$,
 where 
 $$
 {\mathcal F}^{\theta,t}_{i, \alpha}(x,r,A)
 :=
 \left\{
 \begin{array}{ll}
 r^{1-\frac1\alpha}F_i\left(x,t,r^{\frac{1}{\alpha}},r^{\frac1\alpha-1}\theta,r^{\frac{1}{\alpha}-3}A\right)\quad & \mbox{if}\quad \alpha\not=0,\vspace{5pt}\\
 e^{-r}F_i\left(x,t, e^r, e^r \theta,e^r A\right) & \mbox{if}\quad \alpha=0,
 \end{array}
 \right.
 $$
 for $i\in\{1,\dots,k,\lambda\}$.
  \end{itemize}
Then $U_{\alpha, \lambda}$ defined by \eqref{eq:2.4} is a viscosity subsolution of \eqref{eq:2.2}--\eqref{eq:2.3} with $i=\lambda$. 
%Moreover, the same result holds in the \textcolor{red}{\sout{general}} case when $u_i\geq 0$ in $D_i$ for all $i=1, \ldots, k$ 
%if we further assume that the definition of $F_\lambda$ can be extended to $D_\lambda\times [0, \infty)\times \R^n\times \S^n$ so that
%\begin{equation}\label{eq:2.5}
%F_\lambda (x, t, 0, \eta, A)=0 \quad \text{for all $(x, t)\in D_\lambda$, $\eta\in \R^n$, $A\in \S^n$}.
%\end{equation}
\end{thm}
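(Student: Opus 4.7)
The plan is to establish the subsolution property of $U_{\alpha,\lambda}$ by first showing that the supremum in \eqref{eq:2.4} is attained, then performing a monotone change of variables that linearizes the $M_\alpha$-convolution into an ordinary additive sup- or inf-convolution, and finally invoking the parabolic Crandall--Ishii--Lions theorem on semi-jets to transfer the subsolution inequalities for the factors $u_i$ into one for $U_{\alpha,\lambda}$.

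First I would verify that $U_{\alpha,\lambda}$ is upper semicontinuous on $\ol{D_{\lambda,T}}$ and that for each $(x,t)$ the supremum is attained at some $(y_1^*,\ldots,y_k^*) \in \prod_i \ol{\Omega_i}$. This uses the compactness of each $\ol{\Omega_i}$, the USC of the $u_i$, and the continuity of $M_\alpha$ on $[0,\infty)^k$ for $\alpha \leq 0$ under the convention that $M_\alpha = 0$ whenever any argument vanishes (for $\alpha<0$, $a_j \to 0^+$ forces $a_j^\alpha \to +\infty$ and hence $M_\alpha \to 0$; for $\alpha=0$, $M_0$ is literally continuous up to zero). The boundary condition $U_{\alpha,\lambda} = 0$ on $\partial \Omega_\lambda \times (0, T)$ follows because $\Omega_\lambda = \sum_i \lambda_i \Omega_i$ is an open Minkowski sum of open sets: every decomposition of a boundary point $x \in \partial \Omega_\lambda$ as $\sum_i \lambda_i y_i$ with $y_i \in \ol{\Omega_i}$ must have $y_j \in \partial \Omega_j$ for at least one $j$ (otherwise openness of each $\Omega_i$ would put $x$ in the interior $\Omega_\lambda$), whence $u_j(y_j,t)=0$ and $M_\alpha = 0$.

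For the interior subsolution property, fix a $C^2$ test function $\varphi$ so that $U_{\alpha,\lambda} - \varphi$ attains a local maximum at $(x_*, t_*) \in D_{\lambda, T}$ with $U_{\alpha,\lambda}(x_*, t_*) > 0$, and let $(y_i^*)$ denote the optimizers, which then satisfy $y_i^* \in \Omega_i$ and $u_i(y_i^*, t_*) > 0$. Introduce the monotone transform $r_i := u_i^\alpha$ when $\alpha < 0$ and $r_i := \log u_i$ when $\alpha = 0$: this turns $U_{\alpha,\lambda}^\alpha$ (resp.~$\log U_{0,\lambda}$) into an additive convolution of the $r_i$'s subject to $\sum_i \lambda_i y_i = x$, and the subsolution property of $u_i$ becomes one for $r_i$ governed, in jet form, precisely by the operator $\mathcal{F}^{\theta, t}_{i, \alpha}$ of hypothesis (F). The local maximum of $U_{\alpha,\lambda} - \varphi$ at $(x_*,t_*)$ correspondingly translates to a local maximum of $\sum_i \lambda_i r_i(y_i, t) - \tilde\varphi(\sum_i \lambda_i y_i, t)$ at $((y_i^*), t_*) \in \prod_i \Omega_i \times (0, T)$, where $\tilde\varphi$ is the transform of $\varphi$.

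Applying the parabolic Crandall--Ishii--Lions theorem on semi-jets of sums then produces, for each $i$, an element $(\hat\tau_i, \tilde\xi, \hat X_i) \in \ol{P}^{2,+} r_i(y_i^*, t_*)$ with common spatial slope $\tilde\xi = \nabla \tilde\varphi(x_*, t_*)$, with $\sum_i \lambda_i \hat\tau_i = \partial_t \tilde\varphi(x_*, t_*)$, and with a block-matrix bound in $\S^{kn}$ that, via testing on the constant vector $(\eta, \ldots, \eta)$ and the passage $\mu \to 0^+$, reduces to $\sum_i \lambda_i \hat X_i \leq \nabla^2 \tilde\varphi(x_*, t_*)$. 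Summing the individual subsolution inequalities with weights $\lambda_i$, invoking condition (F) to compress $\sum_i \lambda_i \mathcal{F}^{\tilde\xi, t_*}_{i, \alpha}(y_i^*, r_i(y_i^*, t_*), \hat X_i)$ into a single $\mathcal{F}^{\tilde\xi, t_*}_{\lambda, \alpha}(x_*, \sum_i \lambda_i r_i(y_i^*, t_*), \sum_i \lambda_i \hat X_i)$, and finally using ellipticity of $F_\lambda$ to replace $\sum_i \lambda_i \hat X_i$ with $\nabla^2 \tilde\varphi(x_*, t_*)$, yields the subsolution inequality for the transformed variable at $(x_*, t_*)$; inverting the change of variables produces the desired inequality for $U_{\alpha,\lambda}$ at $i = \lambda$. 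The hardest part will be the Hessian extraction: the native CIL bound lives on the full block space $\S^{kn}$ and only reduces to the clean inequality $\sum_i \lambda_i \hat X_i \leq \nabla^2 \tilde\varphi$ after careful diagonal testing and a clean passage through the closure $\ol{P}^{2,+}$. A secondary delicate point is the degenerate case $U_{\alpha,\lambda}(x_*, t_*) = 0$, where $\mathcal{F}^{\theta, t}_{\lambda, \alpha}$ can be singular; this must be handled separately, either from the constraints that $U_{\alpha,\lambda} \geq 0$ imposes on $\nabla \varphi$ and $\partial_t \varphi$ at $(x_*, t_*)$, or by approximation with strictly positive subsolutions.
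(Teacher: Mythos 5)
Your proposal is correct and follows essentially the same route as the paper's proof: attainment and interiority of the optimizers (forced by the boundary blow-up of $u_i^\alpha$, resp.\ $\log u_i$, together with positivity of $U_{\alpha,\lambda}$ in the interior), the power/log change of variables turning the $M_\alpha$-convolution into an additive one, the Crandall--Ishii lemma on semi-jets with the block Hessian bound, and then condition (F) plus ellipticity to close the argument. The only cosmetic differences are that the paper tests the block inequality with the weighted vectors $(u_{1,*}^{\alpha}w,\dots,u_{k,*}^{\alpha}w)$ expressed through the jets of the original $u_i$ rather than the constant vector in the transformed jets (the two computations yield the same inequality), and the degenerate case $U_{\alpha,\lambda}(x_*,t_*)=0$ you flag does not arise here because the $u_i$ are assumed positive in $D_{i,T}$.
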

\begin{proof}
Let $\alpha\in(-\infty, 0]$ and $\lambda=(\lambda_1,\dots,\lambda_k)\in\Lambda_k$. 
It is not difficult to prove that $U_{\alpha, \lambda}$ is upper semicontinuous and positive in $D_{i, T}$. 
Moreover, $U_{\alpha, \lambda}$ satisfies \eqref{eq:2.3} with $i=\lambda$. In fact, note that for each $(x, t)\in \Omega_\lambda\times (0, T)$ and any $x_i\in \Omega_i$ for $i=1, \ldots, k$ such that $x=\sum_{i=1}^k \lambda_i x_i$, when $x\to \partial \Omega_\lambda$, we have $x_i\to \partial \Omega_i$ and thus by \eqref{eq:2.3} $u_i(x_i, t)\to 0$. This implies that $U_\lambda(x, t)\to 0$ as $x\to \partial \Omega_\lambda$.

Let us verify the subsolution property of $U_{\alpha, \lambda}$. We first consider the case when $u_i>0$ in $D_{i, T}$ for all $i\in \{1, \ldots, k, \lambda\}$.
Suppose that there exist a function $\varphi\in C^2(D_{\lambda, T})$ and $(x_*,t_*) \in D_{\lambda, T}$ such that 
\[
\max_{D_\lambda}\,(U_{\alpha, \lambda}-\varphi) =U_{\alpha, \lambda}(x_\ast, t_\ast)-\varphi(x_\ast, t_\ast)=0.
\]
Here, without loss of generality we may assume that $U_{\alpha, \lambda}(x_\ast, t_\ast)=\varphi(x_\ast, t_\ast)$ by adding an appropriate constant to $\varphi$.  %attains a global maximum in $D_\lambda$ at $(x_\ast, t_\ast)$. In addition, assuming that the maximum is  
%\textcolor{blue}{(In the definition we use local maximum. We need addition comments or change the definition.??? This is trivial, but ...)}

Due to the blowup of the boundary value of $u_i^\alpha$ for all $i=1, \ldots, k$, we can find $x_i\in\Omega_i$ % \textcolor{blue}{\sout{$x_i\in \R^n$}} 
such that 
\begin{equation}\label{eq:2.6}
x_*=\displaystyle{\sum_{i=1}^k}\lambda_i x_i,\qquad 
U_{\alpha,\lambda}(x_*,t_*)
=M_{\alpha}\left(u_1(x_1,t_*),\dots,u_k(x_k,t_*);\lambda\right).
\end{equation}
 %For the case $0<p<1$, we can use (G) to guarantee that none of $x_i$ appears on $\partial\Omega_i$ for $i\in \{1, \ldots, k\}$.
It follows that
$$
(y_1, y_2, \ldots, y_k, t)\mapsto \sum_{i=1}^k \lambda_i u_i(y_i, t)^\alpha-\varphi\left(\sum_{i=1}^k y_i, t\right)^\alpha
$$
attains a minimum at $(x_1, x_2, \ldots, x_k, t_\ast)$  when $\alpha<0$ and 
\[
(y_1, y_2, \ldots, y_k, t)\mapsto \prod_{\substack{i=1, \ldots, k}}  u_i(y_i, t)^{\lambda_i} -\varphi\left(\sum_{i=1}^k y_i, t\right).
\]
 a maximum when $\alpha=0$. Set  
$
U_*:=U_{\alpha, \lambda}(x_*,t_*)$ and 
$u_{i, \ast}:=u_i(x_i,t_*)$
for $i\in\{1,\dots,k\}$.  With these notations, it is then obvious that 
\begin{equation}\label{eq:2.7}
\varphi(x_\ast, t_\ast)=U_\ast
=
\left\{
\begin{array}{ll}
\displaystyle{\left(\sum_{i=1}^k \lambda_i u_{i, \ast}^\alpha\right)^{\frac{1}{\alpha}}} & \mbox{when $\alpha<0$},\vspace{7pt}\\
\displaystyle{\prod_{i=1}^k u_{i, \ast}^{\lambda_k}} & \mbox{when $\alpha=0$}
\end{array}
\right.
\end{equation}
%Below we mainly consider the case $\alpha<0$. 
%The case with $\alpha=0$ will be treated similarly with necessary adaptation.  %Let $\{a_i\}_{i=1}^k\subset[0,\infty)$ be such that 
%$\displaystyle{\sum_{i=1}^k\lambda_i a_i=1}$. 

%
By the Crandall--Ishii lemma \cite{CIL}*{Theorem 8.3},  %\textcolor{blue}{(It is better to state the number of theorem)}, %applied to $C^{2; 1}$ functions, 
for any $\varepsilon>0$ small, 
%The maximality or minimality of %eqref{eq:extremum}% yields 
there exist 
$(\tau_i, \eta_i, Z_i)\in \overline{P}^{2, +} u_i(x_i, t_\ast)$ ($i=1, 2, \ldots k$) such that 
\begin{equation}\label{eq:2.8}
\sum_{i=1}^k \lambda_i u_{i, \ast}^{\alpha-1} %\partial_t u_i(x_i, t_\ast)
\tau_i
=U_\ast^{\alpha-1} \partial_t \varphi(x_\ast, t_\ast),
\end{equation}
\begin{equation}\label{eq:2.9}
\theta:=u_{1, \ast}^{\alpha-1} \eta_1 %\nabla u_0(x_1, t_\ast)
=\ldots =u_{k, \ast}^{\alpha-1}\eta_k %\nabla u_k(x_k, t_\ast)
=U_\ast^{\alpha-1}\nabla \varphi(x_\ast, t_\ast)
\end{equation}
and 
\begin{equation}\label{eq:2.10}
\begin{aligned}
&\begin{pmatrix}
\lambda_1 X_1 & & & \\
& \lambda_2 X_2 & & \\
& & \ddots & \\
& & & \lambda_{k} X_{k}
\end{pmatrix}\\
&\leq  
\begin{pmatrix}
\lambda_1^2 Y & \lambda_1\lambda_2 Y & \cdots & \lambda_1\lambda_{k} Y\\
\lambda_2 \lambda_1 Y & \lambda_2^2 Y & \cdots & \lambda_2 \lambda_{k} Y\\
\vdots & \vdots & \ddots & \vdots\\
\lambda_{k}\lambda_1 Y & \lambda_{k-1} \lambda_2 Y &  \cdots & \lambda_{k}^2 Y
\end{pmatrix}
+\varepsilon \begin{pmatrix}
\lambda_1^2 Y & \lambda_1\lambda_2 Y & \cdots & \lambda_1\lambda_{k} Y\\
\lambda_2 \lambda_1 Y & \lambda_2^2 Y & \cdots & \lambda_2 \lambda_{k} Y\\
\vdots & \vdots & \ddots & \vdots\\
\lambda_{k}\lambda_1 Y & \lambda_{k-1} \lambda_2 Y &  \cdots & \lambda_{k}^2 Y
\end{pmatrix}^2,
\end{aligned}
\end{equation}
%\end{equation}
%with $Z$ given by 
%\begin{equation}
%Z=
where 
%\[
%\sgn^\ast(p)=\begin{cases}
%1 & \text{if $p\geq 0$,}\\
%-1 & \text{if $p<0$,}
%\end{cases}
%\]
%and
\[
\begin{aligned}
X_i&=u_{i, \ast}^{\alpha-1} %\nabla^2 u_i(x_i, t_\ast)
Z_i+(\alpha-1) u_{i, \ast}^{\alpha-2} \eta_i\otimes \eta_i, \ i=1, 2, \ldots, k, \\
Y&=U_\ast^{\alpha-1} \nabla^2 \varphi(x_\ast, t_\ast) + (\alpha-1) U_\ast^{\alpha-2} \nabla \varphi(x_\ast, t_\ast) \otimes \nabla \varphi(x_\ast, t_\ast).
\end{aligned}
\] 
Here we recall from Remark \ref{rmk jets} the definition of $\overline{P}^{2, +} u$ for an upper semicontinuous function $u$.   
Take an arbitrary unit vector $w\in \R^n$. Multiplying \eqref{eq:2.10} by $(u_{1, \ast}^\alpha w, u_{2, \ast}^\alpha w, \ldots, u_{k, \ast}^\alpha w)$ from left and its transpose from right, by \eqref{eq:2.7} we get 
\[
\begin{aligned}
 & \sum_{i=1}^k \lambda_i u_{i, \ast}^{3\alpha-1} \langle  Z_i w, w\rangle+ (\alpha-1) \sum_{i=1}^k \lambda_i u_{i, \ast}^\alpha |\langle \theta, w\rangle|^2\\
& \leq 
\sum_{i=1}^k \left(\lambda_i u_{i, \ast}^\alpha\right)^2 \langle Y w, w\rangle +O(\varepsilon)\\
&\leq 
U_\ast^{3\alpha-1} \langle \nabla^2 \varphi(x_\ast, t_\ast) w, w\rangle+(\alpha-1) U_\ast^\alpha |\langle \theta, w\rangle|^2 +O(\varepsilon),
\end{aligned}
\]
which further yields  
\begin{equation}\label{eq:2.11}
U_\ast^{1-3\alpha} \sum_{i=1}^k \lambda_i u_{i, \ast}^{3\alpha-1}  Z_i
\leq 
\nabla^2 \varphi(x_\ast, t_\ast)+C\varepsilon I
\end{equation}
for some $C>0$. 
Also, in view of \eqref{eq:2.7}, \eqref{eq:2.8} and \eqref{eq:2.9}, we obtain 
\begin{equation}
\label{eq:2.12}
 \partial_t\varphi(x_*,t_*)
 =U_*^{1-\alpha}\sum_{i=1}^k\lambda_i\, u_{i, \ast}^{\alpha-1} \tau_i,\quad
 \nabla\varphi(x_*,t_*)
 =U_*^{1-\alpha}\sum_{i=1}^k\lambda_i\,u_{i, \ast}^{\alpha-1} \eta_i
 =U_*^{1-\alpha}\theta.
\end{equation}

Using \eqref{eq:2.11} and \eqref{eq:2.12} as well as the ellipticity of $F$, 
we deduce
\[
\begin{split}
& \partial_t\varphi(x_*,t_*)
+F_\lambda \left(x_*,t_*,U_*,\nabla\varphi(x_*,t_*),\nabla^2\varphi(x_*,t_*)+C\varepsilon I\right)\\
& \leq U_*^{1-\alpha}
\sum_{i=1}^k \lambda_i \frac{\tau_i}{u_{i, \ast}^{1-\alpha}}
+F_\lambda\biggr(x_*,t_*,U_*,U_*^{1-\alpha}\theta,U_*^{1-3\alpha}\sum_{i=1}^k \lambda_i u_{i, \ast}^{3\alpha-1} {Z_i} \biggr).\\
\end{split}
\]
%for some $\tilde{C}>0$. 
Since $u_i$ is a viscosity subsolution of \eqref{eq:2.2}, we have
\begin{equation}
\label{eq:2.13}
\begin{split}
& \partial_t \varphi(x_*,t_*)
+F_\lambda \left(x_*,t_*,U_*,\nabla\varphi(x_*,t_*),\nabla^2\varphi(x_*,t_*)+C\varepsilon I\right)\\
& \leq -U_*^{1-\alpha}\sum_{i=1}^k 
\frac{\lambda_i}{u_{i, \ast}^{1-\alpha}} F_i(x_i,t_*, u_{i, \ast}, u_{i, \ast}^{1-\alpha}\theta, u_{i, \ast}^{1-3\alpha}A_{i})+\\
&\qquad\qquad\qquad
+F_\lambda\left(x_*,t_*,U_*,U_*^{1-\alpha}\theta,U_*^{1-3\alpha}\sum_{i=1}^k\lambda_i A_{i}\right),
\end{split}
\end{equation}
where %\textcolor{red}{\sout{we write}}
$A_i=u_{i, \ast}^{3\alpha-1}{%\nabla^2 u_i(x_i, t_\ast)
Z_i}$ for $i=1,\dots,k.$

On the other hand, for $\alpha<0$ it follows from (F) that 
\begin{equation*}
\begin{split}
 & U_*^{1-\alpha}\sum_{i=1}^k
\frac{\lambda_i }{u_{i, \ast}^{1-\alpha}}F_i\left(x_i,t_*, u_{i, \ast}, u_{i, \ast}^{1-\alpha}\theta, u_{i, \ast}^{1-3\alpha}A_i\right)\\
 & =U_*^{1-\alpha}\sum_{i=1}^k
\lambda_i v_{i, \ast}^{1-\frac{1}{\alpha}}F_i\left(x_i,t_*, v_{i, \ast}^{\frac{1}{\alpha}}, v_{i, \ast}^{\frac{1}{\alpha}-1}\theta, 
\left(v_{i, \ast}\right)^{\frac{1}{\alpha}-3}A_{i}\right)\\
 & =U_*^{1-\alpha}\sum_{i=1}^k
\lambda_i{\mathcal F}^{\theta,t_*}_{i,\alpha}\left(x_i, v_{i, \ast}, A_i\right) \ge U_*^{1-\alpha}\,
 {\mathcal F}^{\theta,t_*}_{\lambda,\alpha}\left(\sum_{i=1}^k \lambda_ix_i,\sum_{i=1}^k \lambda_i v_{i, \ast},\sum_{i=1}^k \lambda_iA_i\right),
\end{split}
\end{equation*}
where $v_{i, \ast}:=(u_{i, \ast})^\alpha$. 
Since 
$$
\sum_{i=1}^k \lambda_i x_i=x_*,
\quad
\sum_{i=1}^k \lambda_i v_{i, \ast}
=M_\alpha\left(u_{1, \ast},\dots, u_{k, \ast};\lambda\right)^\alpha=U_*^\alpha,
$$
we have 
\begin{equation*}
\begin{split}
 & U_*^{1-\alpha}\sum_{i=1}^k
\frac{\lambda_i }{u_{i, \ast}^{1-\alpha}}F_i\left(x_i,t_*, u_{i, \ast}, u_{i, \ast}^{1-\alpha}\theta, u_{i, \ast}^{1-3\alpha}A_i\right)\\
 & \ge  U_*^{1-\alpha}
 {\mathcal F}^{\theta,t_*}_{\lambda, \alpha}\left(x_*,U_*^\alpha,\sum_{i=1}^k \lambda_iA_i\right) 
 =F_\lambda\left(x_*,t_*,U_*,U_*^{1-\alpha}\theta,U_*^{1-3\alpha}\sum_{i=1}^k \lambda_iA_{i}\right).
\end{split}
\end{equation*}
Similarly, if $\alpha=0$, 
setting $v_{i, \ast}:=\log u_{i, \ast}$, 
we have
$$
\sum_{i=1}^k \lambda_i v_{i, \ast}
=\log\left(\prod_{i=1}^ku_{i, \ast}^{\lambda_i}\right)=\log U_*,
$$
and we obtain
\begin{equation*}
\begin{split}
 & U_*\sum_{i=1}^k
 \frac{\lambda_i}{u_{i, \ast}}F_i\left(x_i,t_*, u_{i, \ast}, u_{i, \ast}\theta, u_{i, \ast}A_i\right)\\
 & =U_*\sum_{i=1}^k
\lambda_i e^{-v_{i, \ast}}F_i\left(x_i, t_*, e^{v_{i, \ast}}, e^{v_{i, \ast}}\theta, e^{v_{i, \ast}}A_i\right)
=U_*\sum_{i=1}^k
\lambda_i{\mathcal F}^{\theta,t_*}_{i,0}\left(x_i, v_{i, \ast}, A_i\right)\\
 & \ge U_*\,
 {\mathcal F}^{\theta,t_*}_{\lambda,0}\left(\sum_{i=1}^k \lambda_ix_i,\sum_{i=1}^k \lambda_i v_{i, \ast},\sum_{i=1}^k \lambda_iA_i\right)\\
 & =U_*
 {\mathcal F}^{\theta,t_*}_{\lambda, 0}\left(x_*,\log U_*,\sum_{i=1}^k \lambda_iA_i\right)
 =F_\lambda\left(x_*,t_*,U_*,U_*\theta,U_*\sum_{i=1}^k \lambda_iA_{i}\right).
\end{split}
\end{equation*}
These together with \eqref{eq:2.13} imply 
\[
\partial_t\varphi(x_*,t_*)
+F_\lambda(x_*,t_*,U_*,\nabla\varphi(x_*,t_*),\nabla^2\varphi(x_*,t_*)+C\varepsilon I)\leq 0.
\]
%Since $(x_*,t_*)$ is arbitrary, 
Letting $\varepsilon\to 0$, we see that 
\begin{equation}\label{eq:2.14}
\partial_t\varphi(x_*,t_*)
+F_\lambda(x_*,t_*,U_*,\nabla\varphi(x_*,t_*),\nabla^2\varphi(x_*,t_*))\leq 0, 
\end{equation}
and thus 
$U_{\alpha, \lambda}$ is a viscosity subsolution of \eqref{eq:2.2} with $i=\lambda$. The proof is complete now. 
\end{proof}

\begin{comment}
We finally discuss the general case for nonnegative solutions under the additional condition \eqref{eq:2.5}. For $u_i\geq 0$ in $D_i$ for $i=1, 2, \ldots, k$, 
we can still define $U_{\textcolor{red}{\alpha,}\lambda}$ as in \eqref{eq:2.4}, which is still upper semicontinuous in $D_\lambda$ and satisfies the Dirichlet boundary condition. 
If $U_{\textcolor{red}{\alpha,}\lambda}$ is tested at $(x_\ast, t_\ast)\in D_\lambda$ such that 
$U_{\textcolor{red}{\alpha,}\lambda}(x_\ast, t_\ast)>0$, then we can still find $x_i\in \Omega_i$ such that $u_i(x_i, t_\ast)>0$ for all $i=1, 2, \ldots, k$ and  \eqref{eq:2.6} holds. 
Then we can proceed with the same arguments to show \eqref{eq:2.14}. If, on the other hand, \textcolor{red}{$U_{\alpha, \lambda}(x_\ast, t_\ast)=0$} \textcolor{blue}{\sout{$U_{p, \lambda}(x_\ast, t_\ast)=0$}}, 
then the test function $\varphi$ satisfies $\partial_t \varphi(x_\ast, t_\ast)=0$ and $\varphi(x_\ast, t_\ast)=0$. By \eqref{eq:2.5}, we obtain \eqref{eq:2.14} again. 
\textcolor{red}{Thus Theorem~\ref{thm:2.2} follows.}
\end{comment}

\begin{rem}
Since all classical subsolutions are viscosity subsolutions, our result above certainly holds if it is additionally assumed that $u_i\in C^{2; 1}(D_{i, T})\cap 
C(\overline{D_{i, T}})$ for all $i=1, 2, \ldots, k$. In fact, under this additional assumption, we can simplify our proof by replacing all $(\tau_i, \eta_i, Z_i)$ with $(\partial_t u_i(x_i, t_\ast), \nabla u_i(x_i, t_\ast), \nabla^2 u_i(x_i, t_\ast))$ and taking $\varepsilon=0$. 
\end{rem}

%\begin{rem}
%We can show the same result as in Theorem \ref{thm:2.2} for positive viscosity subsolutions $u_i$ of \eqref{eq:2.2} with inhomogeneous boundary condition 
%\begin{equation}%label{pm-cd2-inhomo}%
%u=\delta\quad \text{on $\partial \Omega\times (0, \infty)$}
%\end{equation}
%with $\delta>0$ small provided that 
%\end{rem}

%\begin{rem}%label{rmk nonnegative}%
%Because of the positivity assumption on $u_i$, we take the time interval $(0, T)$ instead of $(0, \infty)$ to allow applications to more general equations. In general, nonlinear parabolic equations (such as the fast diffusion equations we shall discuss later) may admit solutions that extinct in finite time even if the initial data are positive. We thus can only guarantee a positive solution and obtain the results in Theorem %ref{Theorem:4.1}%  up to a finite time $T>0$. However, the statement in Theorem %ref{Theorem:4.1}% in the case $0<p<1$ holds for global-in-time nonnegative solutions provided that $F$ can be extended to a function in $C(D_i\times [0,\infty)\times{\mathbb R}^n\times\S^n$ such that \eqref{eq:2.5} holds. 
%\end{rem}

%\
%Under the same assumptions, it is possible to obtain a similar result for $0< \alpha< 1$ provided that for any $x\in \partial \Omega_i$ with $i\in \{1, \ldots, k\}$ and $0<t< T$
% \[
%{1\over s} u_i(x+s\nu, t)^\alpha\to \infty\quad \text{as $s\to 0$.}
% \]
% This condition is used to guarantee the existence of maximizers $y_i\in \R^n$ for the supremum in %eqref{general mink}%.
%\end{rem}

\begin{rem}\label{thm:2.4}
In Theorem~\ref{thm:2.2} we only focus on the Cauchy--Dirichlet problem, but the same argument also applies to the Cauchy problem with $\Omega_i=\R^n$ and $\alpha<0$ provided that the following decay condition holds for all $u_i$, $i=1, \ldots, k$: for any $t\in (0, T)$, there exists $C(t)>0$ such that 
\begin{equation}\label{eq:2.15}
\sup_{|x|\geq R}u_i(x, t)\leq C(t)R^{1\over \alpha}\quad \text{as $R\to \infty$.} %\textcolor{red}{????}
\end{equation}
We obtain the same result in Theorem~\ref{thm:2.2} with all $\Omega_i$ replaced by $\R^n$: 
the Minkowski convolution $U_{\alpha,\lambda}$ is a subsolution in $\R^n\times (0, \infty)$. 
The decay condition \eqref{eq:2.15} is utilized to guarantee the existence of maximizers $x_i\in \R^n$. 
Moreover, in this case $U_{\alpha,\lambda}$ satisfies \eqref{eq:2.15} too. Indeed, for $\alpha<0$, since $u_i$ satisfies \eqref{eq:2.15}, we have 
\[
U_\lambda(x, t)^\alpha=\inf\left\{\sum_{i=1}^k \lambda_i u_i(x_i, t)^{\alpha}\,
:\,  y_i\in \R^n, 
\, x=\sum_{i=1}^k \lambda_i y_i
\right\}, 
\] 
which implies that, for any $x\in \R^n$ with $|x|$ large and any $y_i\in \R^n$ satisfying $x=\sum_{i=1}^k \lambda_i y_i$, 
\[
U_{\alpha,\lambda}(x, t)^\alpha\geq C^\alpha\sum_{i=1}^k \lambda_i |y_i|\geq C^\alpha |x|. %\textcolor{red}{???}
\]
%A similar argument also works for the case $\alpha=0$. 
\end{rem}
%%%%%%%%%%%%%%%%%%%%%%%%%%%%%%%%%%%%%%
%%%%%%%%%%%%%%%%%%%%%%%%%%%%%%%%%%%%%%
\section{Power concavity preserving for diffusion equations
}\label{section:3}
%%%%%%%%%%%%%%%%%%%%%%%%%%%%%%%%%%%%%%
%%%%%%%%%%%%%%%%%%%%%%%%%%%%%%%%%%%%%%
In this section, we apply Theorem~\ref{thm:2.2} and Remark~\ref{thm:2.4} to the diffusion equation \eqref{eq:1.8} 
and prove Theorem~\ref{thm:1.3}, with several further consequences. 

\subsection{Fast diffusion in general dimensions}

We first consider the Cauchy--Dirichlet problem \eqref{eq:1.15}--\eqref{eq:1.17} in a bounded smooth domain $\Omega\subset \R^n$ with $0<m\le 1$. In general, finite time extinction occurs in the case $0<m<1$ even when the initial data are positive in the domain. We thus consider %solutions $u_i$ ($i=0, 1$) of
\begin{numcases}{}
\partial_t u-{1\over m}\Delta u^m=0 &\quad \text{in $\Omega\times (0, T)$}\label{positive pme}\\
u=0 &\quad \text{on $\partial \Omega\times (0, T)$} \label{positive bdry}
\end{numcases}
for some extinction time $T\in (0, \infty]$. We refer to \cite{BoF} for a recent survey on various properties of this Cauchy-Dirichlet problem. 

%and the subsolution property for their $\alpha$-Minkowski convolution up to the time $T_\lambda=\min\{T_0, T_1\}$.
%
\begin{thm}
\label{thm:3.1}
Let $n\geq 1$ and $0<m\le 1$. Let $\Omega_0$, $\Omega_1 \subset \R^n$ be bounded smooth domains. 
Fix $\lambda\in (0, 1)$ and let $\Omega_\lambda$ be the Minkowski combination of $\Omega_0$ and $\Omega_1$, i.e., 
\begin{equation}\label{eq:3.1}
\Omega_\lambda=(1-\lambda) \Omega_0+\lambda \Omega_1. 
\end{equation}
Suppose that for each $i=0, 1$, $u_i \in USC(\overline{\Omega_i}\times [0, T_i])$ is a positive viscosity solution of the Cauchy--Dirichlet problem~\eqref{positive pme}--\eqref{positive bdry} with $\Omega=\Omega_i$ and 
extinction time $T=T_i\in (0, \infty]$, that is, $u_i>0$ in $\Omega_i\times [0, T_i)$ and $u_i=0$ in $\Omega_i\times [T_i, \infty)$. 
For $\alpha=(m-1)/2$, let $\ol{u}_{\lambda}$ be the spatial Minkowski convolution of $u_0$ and $u_1$, defined by 
\begin{equation}\label{eq:3.2}
\begin{aligned}
\ol{u}_{\lambda}(x,t):=\sup\big\{ 
M_{\alpha}\left(u_0(y,t), u_1(z, t); \lambda\right)
:\, y\in \overline{\Omega_0}, z\in \overline{\Omega_1}, 
\,  x=(1-\lambda)y+\lambda z
\big\}
\end{aligned}
\end{equation}
for $(x, t)\in \Omega_\lambda\times (0, \infty)$. Then $\ol{u}_{\lambda}$
is a viscosity subsolution of the Cauchy--Dirichlet problem~\eqref{positive pme}--\eqref{positive bdry} with $\Omega=\Omega_\lambda$ and extinction time $T=\ol{T}:=\min\{T_0, T_1\}$. 
\end{thm}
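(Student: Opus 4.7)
The plan is to realize Theorem~\ref{thm:3.1} as a direct application of Theorem~\ref{thm:2.2} with $k=2$, weights $(\lambda_1,\lambda_2)=(1-\lambda,\lambda)$, and a single operator $F_0=F_1=F_\lambda=:F$ shared across the three cylinders, where $F$ encodes the fast-diffusion equation $\partial_t u=\frac{1}{m}\Delta u^m$. The work reduces to recasting this equation in the quasilinear form used in Section~\ref{section:2} and verifying hypothesis (F) for the associated operator $\mathcal{F}^{\theta,t}_\alpha$; everything else is then automatic.

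Expanding $\frac{1}{m}\Delta u^m=(m-1)u^{m-2}|\nabla u|^2+u^{m-1}\Delta u$ for $u>0$, the equation takes the form $\partial_t u+F(x,t,u,\nabla u,\nabla^2 u)=0$ with
\begin{equation*}
F(x,t,r,\eta,A)=-(m-1)\,r^{m-2}|\eta|^2-r^{m-1}\tr(A),\qquad r>0,
\end{equation*}
which is elliptic since $r^{m-1}>0$. Plugging this into the definition of $\mathcal{F}^{\theta,t}_\alpha$ and using the relation $m=2\alpha+1$, which gives $m/\alpha=2+1/\alpha$ when $\alpha\neq 0$, a short algebraic computation collapses the expression to
\begin{equation*}
\mathcal{F}^{\theta,t}_\alpha(x,r,A)=-2\alpha\,r\,|\theta|^2-\tr(A)\qquad(\alpha\neq 0),
\end{equation*}
while in the heat-equation case $\alpha=0$, $m=1$, one similarly obtains $\mathcal{F}^{\theta,t}_0(x,r,A)=-\tr(A)$. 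In either case, for $\theta$ and $t$ fixed, the dependence on $(r,A)$ is affine, so the inequality in (F) holds with equality and the hypothesis is automatically verified.

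With (F) in hand, I restrict attention to the interval $(0,\bar T)$ with $\bar T:=\min\{T_0,T_1\}$, on which $u_0,u_1$ are positive viscosity solutions -- hence subsolutions -- of the equation above satisfying zero Dirichlet data on $\partial\Omega_i\times(0,\bar T)$. All hypotheses of Theorem~\ref{thm:2.2} are then met (with $T=\bar T$), and that theorem yields at once that $\ol u_\lambda$ of \eqref{eq:3.2}, which coincides with $U_{\alpha,\lambda}$ of \eqref{eq:2.4} for $k=2$, is a viscosity subsolution of $\partial_t u-\frac{1}{m}\Delta u^m=0$ on $\Omega_\lambda\times(0,\bar T)$ together with the boundary condition $\ol u_\lambda=0$ on $\partial\Omega_\lambda\times(0,\bar T)$. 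To extend the conclusion across $t=\bar T$, observe that $\alpha=(m-1)/2\le 0$ forces $M_\alpha(a,b;\lambda)=0$ as soon as $ab=0$; since at least one of $u_0(\cdot,t),u_1(\cdot,t)$ vanishes identically for $t\ge\bar T$, we get $\ol u_\lambda\equiv 0$ on $\Omega_\lambda\times[\bar T,\infty)$, which is trivially a subsolution. The only delicate point worth monitoring is bookkeeping, namely the matching of the dual scaling $r\mapsto r^{1/\alpha}$ (with negative exponent when $\alpha<0$) against the nonlinearity $u^{m-1}$ of the equation: the identity $m=2\alpha+1$ is precisely what degenerates (F) to equality and allows the whole argument to go through.
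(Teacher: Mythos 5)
Your proposal is correct and follows essentially the same route as the paper: both reduce Theorem~\ref{thm:3.1} to Theorem~\ref{thm:2.2} by writing the equation with $F(x,t,r,\eta,A)=-(m-1)r^{m-2}|\eta|^2-r^{m-1}\tr A$, computing that ${\mathcal F}^{\theta,t}_{\alpha}(x,r,A)=-\tr A-(m-1)r|\theta|^2$ under $\alpha=(m-1)/2$, and noting that the affine dependence on $(r,A)$ makes condition (F) hold with equality. Your handling of the extinction time $\ol{T}=\min\{T_0,T_1\}$ via the convention $M_\alpha(a,b;\lambda)=0$ when $ab=0$ also matches the paper's argument.
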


\begin{proof}
By definition of the power mean $M_\alpha$, it is clear that $\ol{u}_\lambda=0$ in $\ol{\Omega_\lambda}\times [\ol{T}, \infty)$ and $\ol{u}_\lambda>0$ in $\Omega_\lambda\times [0, \ol{T})$. 
Let us verify the subsolution property of $\ol{u}_\lambda$ in $\Omega_\lambda\times (0, \ol{T})$. %Since $u_i$ %is a solution of \eqref{eq:1.15} with $\Omega=\Omega_i$ for $i=0,1$, 
%we see that 
Since $u_i$ is a positive solution of \eqref{positive pme}
with $\Omega=\Omega_i, T=T_i$ for $i=0, 1$, we can rewrite the equation in the form \eqref{eq:2.2}, where 
\begin{equation}\label{eq:3.3}
F_i(x,t,r,\theta, A):=-r^{m-1} \tr A-(m-1)r^{m-2}  |\theta|^2
\end{equation}
%$$
%F_\mu(x,t,\xi,\theta,A):=-m\xi^{-1}\,\mbox{trace}\,A
%-\frac{m(2m-1)}{1-m}\xi^{-2}|\theta|^2
%$$
for $(x,t,r,\theta, A)\in \Omega_i\times (0, T_i)\times (0, \infty)\times \R^n \times \S^n$ and $i=0, 1$. 
%\textcolor{blue}{(What is the domain of $F$? How do we define $F$ at $r=0$? If $F$ is defined for $r\in(0,\infty)$, then we have to modify the arguments from the definition..., I guess. 
%Furthermore, in Theorem~2.2, we discussed the case of $r=0$. I %am confused...)}
For any fixed $0<\lambda<1$, we take $F_\lambda$ to be the same operator $F$ as in \eqref{eq:3.3}. Then, under the choice $\alpha=(m-1)/2$, we have
\[
\begin{aligned}
{\mathcal F}^{\theta,t}_{i, \alpha}(x, r, A) & =r^{1-\frac1\alpha}F\left(x,t,r^{\frac{1}{\alpha}},r^{\frac1\alpha-1}\theta,r^{\frac1\alpha-3}A\right)\\
&=-\tr A-(m-1)r |\theta|^2
\end{aligned}
\]
for all $(x, r, A)\in \Omega_i\times (0, \infty)\times\S^n$,
%\textcolor{blue}{\sout{$(x, r, A)\in \Omega_i\times (0, T_i)\times (0, \infty)\times\S^n$}}, 
$(\theta,t)\in{\mathbb R}^n\times(0,T_i)$ and $i\in \{0, 1, \lambda\}$. 
Since %\textcolor{blue}{\sout{Noticing that}} 
${\mathcal F}^{\theta,t}_{i, \alpha}$ is linear in $r$ and $A$, we verify (F) immediately. 
Hence, by Theorem~\ref{thm:2.2},  we see that $\ol{u}_{\lambda}$ is a positive viscosity subsolution of \eqref{positive pme} with $i=\lambda$ 
satisfying $\ol{u}_\lambda=0$ on $\partial \Omega_\lambda\times (0, \ol{T})$.  
%The boundary condition $u_\lambda=0$ on $\partial \Omega_\lambda\times (0, \infty)$ can also be easily verified. In view of the definition of $\Omega_\lambda$ as in \eqref{eq:3.1}, for any $x\in \partial \Omega_\lambda$ and $t>0$ and $y, z\in \R^n$ satisfying $x=(1-\lambda)y+\lambda z$, either $y\notin \Omega_1$ or $z\notin \Omega_2$ holds. Since $u_0(\cdot, t)=0$ on $\R^n\setminus \Omega_i$ for $i=1, 2$, the definition of $u_\lambda$ yields 
\end{proof}

We now present a comparison principle for the Cauchy--Dirichlet problem with $n\geq 1$ and $0<m\leq 1$. 
Related comparison results are studied in \cite{FPS}. Our argument below is slightly different. 
%similar to Proposition \ref{thm:3.9}. We also slightly change assumptions to accommodate the phenomenon of finite time extinction. 
%Our comparison result is for a viscosity subsolution $u$ and a solution $v$ with positive initial data.
\begin{prop}\label{thm:3.2}
Assume that $n\geq 1$ and $0<m\leq 1$. Let $\Omega$ be a bounded domain in $\R^n$ and $D_T=\Omega\times (0, T)$ for some $T>0$.
Assume that $u\in USC(\overline{D_T})$ is a bounded nonnegative subsolution of \eqref{positive pme}--\eqref{positive bdry} satisfying $u>0$ in $D_T$ and 
$v\in C(\overline{D_T})\cap C^{2; 1}(D_T)$ is a classical supersolution of \eqref{positive pme} satisfying $v>0$ in $\ol{D_T}$.  
%\textcolor{red}{{\rm ({\it resp.~a bounded nonnegative supersolution})}} of 
%with $\Omega_i=\Omega$ for $i=0, 1$. 
%Suppose that there exists $T\in (0, \infty]$ such that $v\in C^2(\Omega\times (0, T))$, $v>0$ in~$\Omega\times (0, T)$ 
%and $v\equiv 0$ in $\Omega\times [T, \infty)$. 
If $u(\cdot, 0)< v(\cdot, 0)$ in $\overline{\Omega}$, then $u\leq v$ in $\overline{\Omega}\times [0, T)$. 
\end{prop}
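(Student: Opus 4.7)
The plan is to argue by contradiction via a standard viscosity-theoretic penalization that leverages the classical regularity of $v$ and the strict initial inequality. First I would fix an arbitrary $T_0 \in (0,T)$ and aim to prove $u \le v$ on $\overline{\Omega}\times[0,T_0]$; letting $T_0 \uparrow T$ then gives the full claim. Suppose for contradiction that $M := \sup_{\overline{\Omega}\times[0,T_0]}(u - v) > 0$. Since $u$ is upper semicontinuous and $v$ is continuous on a compact set, this supremum is attained; using the strict initial inequality $u(\cdot,0) < v(\cdot,0)$ on $\overline{\Omega}$ together with the zero boundary condition $u = 0 < v$ on $\partial\Omega\times(0,T)$ from \eqref{positive bdry}, one rules out the parabolic boundary as the location of this positive supremum.

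To push the maximizer into the smooth interior of $D_T$ where $v\in C^{2;1}$, I would introduce the time-penalized functional
\[
\Phi_\varepsilon(x,t) := u(x,t) - v(x,t) - \frac{\varepsilon}{T_0 - t}
\quad\text{on }\overline{\Omega}\times[0, T_0).
\]
Since $\Phi_\varepsilon \to -\infty$ as $t \to T_0^-$ and $\Phi_\varepsilon < 0$ on the parabolic boundary, for all sufficiently small $\varepsilon > 0$ the function $\Phi_\varepsilon$ attains a positive maximum at some $(x_\varepsilon, t_\varepsilon) \in \Omega\times(0,T_0)$. Since $\phi_\varepsilon := v + \varepsilon/(T_0 - t)$ is smooth in $D_T$ and $u - \phi_\varepsilon$ attains a maximum at $(x_\varepsilon, t_\varepsilon)$, it is an admissible test function for $u$. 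Testing the viscosity subsolution inequality for $u$ against the classical supersolution equation for $v$ at this point and subtracting then yields the key inequality
\[
\frac{\varepsilon}{(T_0 - t_\varepsilon)^2}
\;\le\;
F\bigl(x_\varepsilon, t_\varepsilon, v_\varepsilon, \nabla v, \nabla^2 v\bigr)
- F\bigl(x_\varepsilon, t_\varepsilon, u_\varepsilon, \nabla v, \nabla^2 v\bigr),
\]
with $F$ as in \eqref{eq:3.3} and $u_\varepsilon := u(x_\varepsilon,t_\varepsilon)$, $v_\varepsilon := v(x_\varepsilon,t_\varepsilon)$.

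For the linear case $m = 1$, the operator $F$ is independent of $r$, so the right-hand side vanishes identically and one immediately contradicts the strict positivity of the left-hand side. For $m \in (0, 1)$ the right-hand side does not vanish in general, and I would then recast the comparison in the pressure-like variables $W := u^m$ and $V := v^m$. Under this substitution $W$ remains a viscosity subsolution and $V$ a classical supersolution of $\partial_t W - W^{(m-1)/m}\Delta W = 0$, an equation whose operator is independent of $\nabla W$. Repeating the penalization for $W - V$ produces the analogous inequality $\varepsilon/(T_0 - t_\varepsilon)^2 \le (W_\varepsilon^{(m-1)/m} - V_\varepsilon^{(m-1)/m})\,\Delta V$, in which the prefactor on the right is strictly negative because $(m-1)/m < 0$ and $W_\varepsilon > V_\varepsilon > 0$ at the maximum. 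A further localization device, such as an auxiliary quadratic penalty centered at a maximizer of $W - V$, then confines $(x_\varepsilon,t_\varepsilon)$ to a compact subset of $D_T$ on which $|\Delta V|$ is uniformly bounded, and a suitable choice of parameters yields the desired contradiction.

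The main obstacle is precisely the fast-diffusion case $0 < m < 1$: because $F$ fails to be monotone in the $r$-variable, a direct test of $u$ against $v$ does not immediately pin down a sign on the right-hand side of the key inequality. The resolution rests on the substitution $W = u^m$, which removes the gradient-dependent part of the operator, together with the positivity hypotheses $u > 0$ in $D_T$ and $v > 0$ on $\overline{D_T}$ that keep the transformed equation non-degenerate on compact subsets. With these ingredients in place the standard viscosity-comparison machinery closes the argument, and letting $T_0 \uparrow T$ completes the proof.
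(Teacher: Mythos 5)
Your argument for $m=1$ is fine (the operator is then independent of $r$ and the standard time-penalization closes immediately), but for $0<m<1$ the final step has a genuine gap. After the substitution $W=u^m$, $V=v^m$ you arrive at
\[
\frac{\varepsilon}{(T_0-t_\varepsilon)^2}\;\le\;\bigl(W_\varepsilon^{(m-1)/m}-V_\varepsilon^{(m-1)/m}\bigr)\,\Delta V ,
\]
and you assert that the negativity of the prefactor plus a uniform bound on $|\Delta V|$ yields a contradiction. It does not: the sign of the product depends on the sign of $\Delta V$ (the transformed operator is still non-monotone in $r$, so the change of variables buys you nothing on this point), and when $\Delta V<0$ the right-hand side is positive and of size $O\bigl(|W_\varepsilon-V_\varepsilon|\cdot\|\Delta V\|_\infty\bigr)$. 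With the single-parameter penalization you only know $W_\varepsilon-V_\varepsilon\ge \varepsilon/(T_0-t_\varepsilon)$ from below; from above you have nothing better than $\sup(W-V)$, a fixed positive constant. Meanwhile the left-hand side tends to $0$ as $\varepsilon\to0$ (there is no reason for $t_\varepsilon\to T_0$), so no contradiction is reached, and an auxiliary quadratic localization does not change the order of magnitude of either side.

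What is missing is a matching \emph{upper} bound on the difference at the penalized maximizer, of the same order $\varepsilon/(\,\cdot\,)$ as the lower bound, together with a reason why the remaining time factor is small. The paper obtains both by (i) locating the first crossing time $\hat t$ (so that $u\le v$ on $[0,\hat t]$ and the maximizer necessarily satisfies $t_\varepsilon>\hat t$, whence $\hat t+\tau-t_\varepsilon<\tau$ is as small as desired), and (ii) running a two-parameter penalization: first calibrating $\varepsilon=\varepsilon(\tau)$ by the intermediate value theorem so that $\max\bigl(u-v-\tfrac{2\varepsilon}{\hat t+\tau-t}\bigr)=0$, which forces $\tfrac{\varepsilon}{\hat t+\tau-t_\varepsilon}\le u_\varepsilon-v_\varepsilon\le \tfrac{2\varepsilon}{\hat t+\tau-t_\varepsilon}$ at the maximizer of the $\varepsilon$-penalized function. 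The zeroth-order error, controlled by the local Lipschitz continuity of $r\mapsto F(r,\nabla v,\nabla^2 v)$ (valid since $v>0$ on $\overline{D_T}$), is then $O\bigl(\varepsilon/(\hat t+\tau-t_\varepsilon)\bigr)$ and is beaten by the good term $\varepsilon/(\hat t+\tau-t_\varepsilon)^2$ once $\tau$ is small. You would need to incorporate this (or an equivalent device) to make your $0<m<1$ case work; as written, ``a suitable choice of parameters'' does not exist for your setup.
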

\begin{proof}
%\textcolor{red}{\sout{The proof is essentially an application of the definition of viscosity subsolutions.}} 
Suppose that $u-v$ attains a positive value in $D_T$. Then, there must exist $\hat{t}\in \Omega\times [0, T)$ such that 
\begin{align}
\label{comparison-time}
 & u\leq v\quad \text{in $\overline{\Omega}\times [0, \hat{t}]$},\\
\label{hypo-comparison}
 & \max_{x\in \overline{\Omega}} \left(u(x, s_j)-v(x, s_j)\right)>0 \quad \text{at a sequence $s_j\to \hat{t}+$.}
\end{align}
In view of the conditions that $u\leq v$ on $\overline{\Omega}\times \{0\}$ and $u-v\in USC(\ol{D_T})$, we have $\hat{t}>0$.

For $\tau\in(0, T-\hat{t})$ and $\vep>0$, let 
\[
\Psi_\vep(x, t):= u(x, t)-v(x, t)-{2\vep \over \hat{t}+\tau-t},\quad (x, t)\in \overline{\Omega}\times [0, \hat{t}+\tau).
\]
Note that 
\[
M(\vep):=\max_{\overline{\Omega}\times [0, \hat{t}+\tau)}\Psi_\vep
\]
is continuous with respect to $\vep\in (0, \infty)$. In view of the boundedness of $u, v$ in $\overline{D_T}$, we have $M(\vep)\to -\infty$ as $\vep\to \infty$. It also follows from \eqref{hypo-comparison} that $\lim_{\vep\to 0}M(\vep)>0$. Then, by the intermediate value theorem, we can choose $\vep=\vep(\tau)>0$ such that $M(\vep)=0$, 
which implies the existence of a maximizer $(\ol{x}, \ol{t})$ of $\Psi_\vep$ in $\overline{\Omega}\times [0, \hat{t}+\tau)$ satisfying 
\begin{equation}\label{psi-result0}
u(x, t)-v(x, t)-{2\vep \over \hat{t}+\tau-t}\leq u(\ol{x}, \ol{t})-v(\ol{x}, \ol{t})-{2\vep \over \hat{t}+\tau-\ol{t}}=0
\end{equation}
for all $(x, t)\in \overline{\Omega}\times [0, \hat{t}+\tau)$. In particular, we have 
\begin{align}
\label{psi-result1}
 & u(\ol{x}, \ol{t})-v(\ol{x}, \ol{t})-{\vep \over \hat{t}+\tau-\ol{t}}={\vep \over \hat{t}+\tau-\ol{t}}>0,\\
\label{psi-result2}
 & u(x, t)-v(x, t)-{\vep \over \hat{t}+\tau-t}\leq {\vep \over \hat{t}+\tau-t} \quad\text{for all $(x, t)\in \overline{\Omega}\times [0, \hat{t}+\tau)$.}
\end{align}
It is also clear from \eqref{psi-result0} that our choice of $\vep=\vep(\tau)$ satisfies $\vep\to 0$ as $\tau\to 0$.

For these $\tau$, $\vep>0$, we next define
\[
\Phi_\vep(x, t):= u(x, t)-v(x, t)-{\vep \over \hat{t}+\tau-t}, \quad (x, t)\in \overline{\Omega}\times [0, \hat{t}+\tau). 
\]
Suppose that $\Phi_\vep$ attains a maximum at a point $(x_\vep, t_\vep)\in \overline{\Omega}\times [0, \hat{t}+\tau)$. By \eqref{psi-result1}, we see that 
\begin{equation}\label{psi-result3}
\max_{\overline{\Omega}\times [0, \hat{t}+\tau)} \Phi_\vep=\Phi(x_\vep, t_\vep)>0. 
\end{equation}
On the other hand, it follows from \eqref{psi-result2} that 
\begin{equation}\label{psi-result4}
\max_{\overline{\Omega}\times [0, \hat{t}+\tau)} \Phi_\vep=\Phi(x_\vep, t_\vep)=u(x_\vep, t_\vep)-v(x_\vep, t_\vep)-{\vep \over \hat{t}+\tau-t_\vep}\leq {\vep \over \hat{t}+\tau-t_\vep}.
\end{equation}
By the relation \eqref{comparison-time}, we see that $t_\vep>\hat{t}$. Moreover, as $\tau$, $\vep(\tau)\to 0$ we can take a convergent sequence, still denoted by $(x_\vep, t_\vep)$ such that $(x_\vep, t_\vep)\to (\hat{x}, \hat{t})$ for some $\hat{x}\in \overline{\Omega}$. Noticing that along this subsequence we have
\[
u(\hat{x}, \hat{t})-v(\hat{x}, \hat{t})\geq \limsup_{\vep\to 0}\Phi_\vep(x_\vep, t_\vep)\geq 0, 
\]
we therefore obtain $\hat{x}\in \Omega$ in view of the boundary data of $u, v$. This implies that $x_\vep$ is bounded away from $\partial \Omega$ for all such $\tau$, $\vep>0$. 
By the definition of subsolutions, we deduce that 
\[
\partial_t \varphi(x_\vep, t_\vep)+ F(u(x_\vep, t_\vep), \nabla \varphi(x_\vep, t_\vep), \nabla^2 \varphi(x_\vep, t_\vep)) \leq 0,
\]
where $\varphi(x, t)=v(x, t)+{\vep\over \hat{t}+\tau-t}$ for $(x, t)\in D_T$ and 
\[
F(r, \theta, A)=-r^{m-1} \tr A-(m-1)r^{m-2}  |\theta|^2, \quad (r, \theta, A)\in \R\times \R^n\times \S^n. 
\]
This immediately yields
\begin{equation}\label{sigma sub}
\partial_t v(x_\vep, t_\vep)+ {\vep\over (\hat{t}+\tau-t_\vep)^2}+ F\left(u(x_\vep, t_\vep), \nabla v(x_\vep, t_\vep), \nabla^2 v(x_\vep, t_\vep)\right) \leq 0.
\end{equation}
Since for all such small $\tau, \vep>0$, $(x_\vep, t_\vep)$ stays in a compact set of $\Omega\times (0, T)$ and 
\[
{\vep\over \hat{t}+\tau-t_\vep}\leq u(x_\vep, t_\vep)-v(x_\vep, t_\vep)\leq {2\vep\over \hat{t}+\tau-t_\vep}
\]
due to \eqref{psi-result3} and \eqref{psi-result4}, we have local Lipschitz continuity of $r\mapsto F(r, \nabla v(x_\vep, t_\vep), \nabla^2 v(x_\vep, t_\vep))$ implying that 
%\begin{equation}\label{lip-comparison}
\[
\begin{aligned}
&F(v(x_\vep, t_\vep), \nabla v(x_\vep, t_\vep), \nabla^2 v(x_\vep, t_\vep))\\
&\leq F(u(x_\vep, t_\vep), \nabla v(x_\vep, t_\vep), \nabla^2 v(x_\vep, t_\vep))+{C\vep\over \hat{t}+\tau-t_\vep}
\end{aligned}
\]
%\end{equation}
for some $C>0$ independent of $\tau, \vep>0$. It then follows from \eqref{sigma sub} that
\[
\partial_t v(x_\vep, t_\vep)+ F\left(v(x_\vep, t_\vep), \nabla v(x_\vep, t_\vep), \nabla^2 v(x_\vep, t_\vep)\right) 
\leq {C\vep\over \hat{t}+\tau-t_\vep}-{\vep\over (\hat{t}+\tau-t_\vep)^2}.
\]
Since $v$ is a classical solution of \eqref{positive pme}, we get 
\begin{equation}\label{contradiction-comparison}
{C\vep\over \hat{t}+\tau-t_\vep}\geq {\vep\over (\hat{t}+\tau-t_\vep)^2}, 
\end{equation}
which is a contradiction when $\vep$, $\tau>0$ are taken small so that $\hat{t}+\tau-t_\vep<1/C$. 
Thus $u-v$ cannot attain a positive value in $D_T$. We complete the proof of this proposition. 
\end{proof}

\begin{rem}\label{rmk comparison}
Proposition \ref{thm:3.2} can be used to prove a comparison result between a bounded nonnegative viscosity subsolution $u\in USC(\ol{D_T})$ and a positive classical solution $v\in C(\overline{D_T})\cap C^{2; 1}(D_T)$ of  \eqref{positive pme}--\eqref{positive bdry}. 
In this case, recall that the solution $v$ can be approximated by the smooth solution $v_\delta\in C^{2; 1}(D_T)$ of the following inhomogeneous Dirichlet problem with $\delta>0$ small:
\begin{numcases}{}
\partial_t u-{1\over m}\Delta u^m=0 &\quad \text{in $D_T=\Omega\times (0, T)$}\notag\\
u=\delta &\quad \text{on $\partial \Omega\times (0, T)$} \label{sigma bdry}\\
u(\cdot, 0)=v(\cdot, 0)+\delta &\quad \text{in $\Omega$}.
\end{numcases}
It is known that $v_\delta\in C^{2; 1}(D_T)$ satisfies $\inf_{D_T} v_\delta\geq \delta$ and converges to $v$ pointwise in $D_T$ as $\delta\to 0$. Consult \cite{VaBook06} and \cite{BoFRo}*{Appendix B} for more details about this approximation. Since Proposition \ref{thm:3.2} implies $u\leq v_\delta$ for any $\delta>0$, we can prove $u\leq v$ in $D_T$ by letting $\delta\to 0$. 
\end{rem}

By Theorem \ref{thm:3.1}, Proposition \ref{thm:3.2} and suitable approximations of solutions, we can now prove Theorem~\ref{thm:1.3} for the case $0<m\leq 1$, $n\geq1$. % the following result for the Cauchy problem \eqref{eq:1.8}\eqref{eq:1.9}. 

\begin{proof}[Proof of Theorem~{\rm\ref{thm:1.3}}] 
For $k=1,2,\dots$, let $\zeta_k$ be a $C_0^\infty({\mathbb R}^n)$ such that 
\[
0\le\zeta_k\le 1\quad\mbox{in}\quad{\mathbb R}^n,\quad
\zeta_k=1\quad\mbox{on}\quad B(0,k),\quad
\zeta_k=0\quad\mbox{outside}\quad B(0,k+1).
\]
For $i\in \{0, 1, \lambda\}$, we denote by $u_{i,k}$ the solution to the Cauchy--Dirichlet problem 
\begin{numcases}{\rm (CD)\quad }
\partial_t u-{1\over m}\Delta u^m=0 &\quad \text{in $B(0, k)\times (0, \infty)$,}\notag\\
u=0 &\quad \text{on $\partial B(0, k)\times (0, \infty)$,} \notag\\
u(\cdot, 0)=\phi_i\zeta_k &\quad \text{in $B(0, k)$}, \notag
\end{numcases}
\begin{comment}
\begin{equation}
\label{eq:3.6}
\left\{
\begin{array}{ll}
\partial_t u={1\over m}\Delta(u^m) & \quad\mbox{in}\quad B(0,k)\times(0,\infty),\vspace{3pt}\\
u=0 & \quad\mbox{on}\quad\partial B(0,k)\times(0,\infty),\vspace{3pt}\\
u(\cdot,0)=\phi_i\zeta_k & \quad\mbox{in}\quad B(0,k).
\end{array}
\right.
\end{equation}
\end{comment}
where $B(0, R)$ denotes the open ball centered at $0$ with radius $R>0$.  
It follows from \eqref{eq:1.13} that 
\begin{equation*}
\begin{split}
M_{\alpha}(u_{1,k}(y, 0),u_{2,k}(z, 0);\lambda)
 & \le M_{\alpha}(\phi_0(y), \phi_1(z);\lambda)\\
 & \le \phi_\lambda((1-\lambda)y+\lambda z)=u_{\lambda, k+1}((1-\lambda)y+\lambda z, 0)
\end{split}
\end{equation*}
for $x$, $y\in B(0, k)$.

Let $T_{i, k}$ denote the extinction time of $u_{i, k}$ for $i\in \{0, 1, \lambda\}$ and set $Q_{\lambda, k}=B(0, k)\times (0, T_{\lambda, k+1})$.
Since $u_{0, k}$, $u_{1, k}$ are nonnegative bounded solutions of (CD), by Theorem \ref{thm:3.1} we see that their Minkowski convolution $\ol{u}_{\lambda, k}$ defined by
\[
\begin{aligned}
 & \ol{u}_{\lambda, k}(x, t)\\
 & : =\sup\big\{M_{\alpha}\left(u_{0, k}(y,t), u_{1, k}(z, t); \lambda\right)\,:\,
 y\in \overline{B(0, k)}, z\in \overline{B(0, k)}, 
\,  x=(1-\lambda)y+\lambda z\big\}
\end{aligned}
\]
for $(x, t)\in B(0, k)\times [0, \infty)$ is a viscosity subsolution with extinction time $\min\{T_{0, k}, T_{1, k}\}$. 

On the other hand, as mentioned in Remark \ref{rmk comparison}, we can approximate $u_{\lambda, k}$ by the solution $u_{\lambda, k}^\delta$ of (CD) with $\delta>0$ and boundary and initial values 
\[
u_{\lambda, k}^\delta=\delta \quad \text{on $\partial B(0, k)\times (0, \infty)$,}
\]
\[
u_{\lambda, k}^\delta(\cdot, 0)=\phi_{\lambda}\zeta_k+\delta \quad \text{in $B(0, k)$.}
\]
%\textcolor{blue}{(Why do we compare $\ol{u}_{\lambda, k}$ with $u_{\lambda, k}$?)} \textcolor{magenta}{(Qing: We here compare $\ol{u}_{\lambda, k}$ with $u_{\lambda, k+1}^\delta$ instead of $u_{\lambda, k}$ or $u_{\lambda, k}^\delta$. It may be possible to change to $u_{\lambda, k}^\delta$, but I am not sure. My impression is that Proposition~\ref{thm:3.2} requires $u_{\lambda, k}$ to be compared with a solution in $C(\ol{B(0, k)}\times [0, \infty))$, and $u_{\lambda, k}$ or $u_{\lambda, k}^\delta$ may have discontinuity at $t=0$ near $\partial B(0, k)$ due to the cut-off of initial value. And $u_{\lambda, k+1}^\delta$ is a solution in a larger domain $B(0, k+1)$ and does not seem to have this problem.)}
%{\color{green} I agree with Qing.}
Since $u_{\lambda, k+1}^\delta\in C(\ol{Q}_{\lambda, k})\cap C^{2;1}(Q_{\lambda, k})$ is a positive supersolution in $Q_{\lambda, k}$ with
\[
u_{\lambda, k+1}^\delta> \ol{u}_{\lambda, k} \quad \text{on $\big(\overline{B(0, k)}\times \{0\}\big)\cup \big(\partial B(0, k)\times [0, T_{\lambda, k+1})\big)$}, 
\]
by Proposition~\ref{thm:3.2} applied to (CD),  we obtain 
\[
u_{\lambda, k+1}^\delta\geq \ol{u}_{\lambda, k} \quad \text{in $B(0, k)\times (0, T_{\lambda, k+1})$}.
\]
Passing to the limit as $\delta\to 0$, we obtain
\begin{equation}\label{approx comparison}
{u}_{\lambda,k+1}(x, t)\ge \ol{u}_{\lambda, k}(x, t)
\end{equation}
for all $x\in B(0,k)$ and $t\in (0, T_{\lambda, k+1})$, which also implies that $\ol{u}_{\lambda, k}(x, T_{\lambda, k+1})=0$ for all $x\in B(0, k)$ and therefore $\min\{T_{0, k}, T_{1, k}\}\leq T_{\lambda, k+1}$. It follows immediately that \eqref{approx comparison} holds for all $x\in B(0,k)$ and $t>0$. Hence, we get 
$$
{u}_{\lambda,k+1}((1-\lambda)y+\lambda z, t)\ge M_{\alpha}(u_{1,k}(y,t),u_{2,k}(z,t);\lambda)
$$
for all $y, z\in B(0,k)$ and $t>0$.
Letting $k\to\infty$, we have $u_{i, k}\to u_i$ locally uniformly and therefore 
\eqref{eq:1.14} holds for all $y, z\in {\mathbb R}^n$ and $t>0$. 

The above convergence of $u_{i, k}$  for $i\in \{0, 1, \lambda\}$ can be deduced as follows. Since $u_{i, k}\leq u_{i, k+1}\leq u_i$ in $B(0, k)\times (0, \infty)$ for all $k\geq 1$, we get a positive function $v_i\in C(\R^n\times (0, \infty))$ such that $u_{i, k}\to v_i$ uniformly in any compact set of $\R^n\times (0, \infty)$ as $k\to \infty$. It is not difficult to verify that $v_i$ is a very weak solution (in the sense of distribution) satisfying the initial condition $v_i=\phi_i$. Noting that by construction $v_i$ is locally Lipschitz in time and thus $\partial_t v_i\in L^1_{loc}(\R^n\times (0, \infty))$, we can apply the uniqueness result \cite{HP}*{Theorem 2.3} to show that $v_i=u_i$, which yields the locally uniform convergence of $u_{i, k}$ to $u_i$. 
\end{proof}

As a consequence of Remark \ref{rmk comparison} and a similar argument as in the proof of Theorem~\ref{thm:1.3} above, we obtain the following result. 

\begin{prop}
\label{thm:3.4}
Let $\Omega_0$, $\Omega_1$ be bounded smooth domains in $\R^n$ with $n\geq 1$. 
For $0<\lambda<1$, let $\Omega_\lambda$ be as in ~\eqref{eq:3.1}. For $i\in \{0, 1, \lambda\}$, denote $D_i=\Omega_i\times (0, \infty)$ and assume that $\phi_i\in C(\ol{\Omega_i})$ satisfies $\phi_i>0$ in $\Omega_i$. Let $0<m\leq 1$. Let $u_i\in C(\overline{D_i})$ be the nonnegative bounded solution of \eqref{eq:1.15}--\eqref{eq:1.17} with $\Omega=\Omega_i$, $\phi=\phi_i$ for $i\in \{0, 1, \lambda\}$. 
%Assume in addition that there exists $T\in (0, \infty]$ such that $u_\lambda\in C^2(\Omega_\lambda\times (0, T))$, $u_\lambda>0$ in $\Omega_\lambda\times (0, T)$ and $u_\lambda\equiv 0$ in $\Omega_\lambda\times [T, \infty)$.
Let $\alpha:=(m-1)/2$.  If
\begin{equation}\label{eq:3.4}
\phi_\lambda((1-\lambda)x_1+\lambda x_2)\geq M_\alpha(\phi_0(x_1), \phi_1(x_2); \lambda)\quad \text{for all $x_i\in \Omega_i$, $i=1, 2$,}
\end{equation}
 then 
\begin{equation}\label{eq:3.5}
\begin{aligned}
&u_\lambda((1-\lambda)x_1+\lambda x_2, t)\\
& \ge  M_\alpha(u_0(x_0, t), u_1(x_1, t); \lambda)\quad\text{for all $x_i\in \Omega_i$, $i=1, 2$ and $t>0$.}
\end{aligned}
\end{equation}
\end{prop}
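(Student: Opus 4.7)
The plan is to mirror the proof of Theorem~\ref{thm:1.3}, but in a streamlined form: since we already work on bounded smooth domains $\Omega_0,\Omega_1$, no exhausting cutoff sequence $\zeta_k$ is needed. The two ingredients are the subsolution property of the spatial Minkowski convolution established in Theorem~\ref{thm:3.1} together with the comparison principle of Proposition~\ref{thm:3.2} applied to a strictly positive regularization of $u_\lambda$ as recalled in Remark~\ref{rmk comparison}.

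Concretely, I would first define $\bar u_\lambda$ on $\Omega_\lambda\times[0,\infty)$ exactly as in \eqref{eq:3.2}. Let $T_i\in(0,\infty]$ be the extinction time of $u_i$ for $i=0,1$ and put $\bar T:=\min\{T_0,T_1\}$. Theorem~\ref{thm:3.1} then yields that $\bar u_\lambda$ is a viscosity subsolution of the Cauchy--Dirichlet problem \eqref{positive pme}--\eqref{positive bdry} on $\Omega_\lambda\times(0,\bar T)$, is strictly positive there, and vanishes on $\partial\Omega_\lambda\times(0,\bar T)$. The hypothesis \eqref{eq:3.4} translates at $t=0$ into the pointwise bound $\bar u_\lambda(\cdot,0)\le\phi_\lambda=u_\lambda(\cdot,0)$ on $\overline{\Omega_\lambda}$: indeed, for any representation $x=(1-\lambda)y+\lambda z$ with $y\in\overline{\Omega_0}$, $z\in\overline{\Omega_1}$, \eqref{eq:3.4} (extended by continuity to $\overline{\Omega_i}$) gives $M_\alpha(\phi_0(y),\phi_1(z);\lambda)\le\phi_\lambda(x)$.

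Next I would introduce the regularized supersolution $u_\lambda^\delta$ from Remark~\ref{rmk comparison}, namely the solution of \eqref{positive pme} on $\Omega_\lambda$ with boundary value $\delta>0$ and initial value $\phi_\lambda+\delta$. Then $u_\lambda^\delta\in C^{2;1}(\Omega_\lambda\times(0,T))\cap C(\overline{\Omega_\lambda\times(0,T)})$ is strictly positive (indeed, bounded below by $\delta$) for every $T>0$ and satisfies $u_\lambda^\delta(\cdot,0)=\phi_\lambda+\delta>\bar u_\lambda(\cdot,0)$ in $\overline{\Omega_\lambda}$ and $u_\lambda^\delta\equiv\delta>0=\bar u_\lambda$ on $\partial\Omega_\lambda\times(0,T)$. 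For any $T\in(0,\bar T)$, Proposition~\ref{thm:3.2} applied on $\Omega_\lambda\times(0,T)$ (where $\bar u_\lambda>0$ as required) gives $\bar u_\lambda\le u_\lambda^\delta$. Letting $\delta\to 0$ and invoking the convergence $u_\lambda^\delta\to u_\lambda$ recalled in Remark~\ref{rmk comparison}, one obtains $\bar u_\lambda\le u_\lambda$ throughout $\Omega_\lambda\times(0,\bar T)$. For $t\ge \bar T$, at least one of $u_0, u_1$ is identically zero, so $\bar u_\lambda\equiv 0$ and the inequality $\bar u_\lambda\le u_\lambda$ holds trivially. Unfolding the definition of $\bar u_\lambda$ is exactly \eqref{eq:3.5}.

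The main obstacle is aligning the technical hypotheses of Proposition~\ref{thm:3.2} - boundedness and strict positivity of the subsolution on the open parabolic cylinder, smoothness and positivity of the supersolution, and strict separation at $t=0$. All of these are handled by passing through $u_\lambda^\delta$ and by restricting the comparison to $(0,\bar T)$, after which the complementary regime $t\ge\bar T$ is trivial; every other ingredient is either a direct quotation of Theorem~\ref{thm:3.1} or a pointwise manipulation of power means.
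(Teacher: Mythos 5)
Your proposal is correct and follows essentially the same route as the paper: the Minkowski convolution $\ol{u}_\lambda$ is a positive viscosity subsolution up to the extinction time $\ol{T}=\min\{T_0,T_1\}$ by Theorem~\ref{thm:3.1}, the initial comparison follows from \eqref{eq:3.4}, and comparison with the $\delta$-regularized supersolution of Remark~\ref{rmk comparison} via Proposition~\ref{thm:3.2} gives $\ol{u}_\lambda\le u_\lambda$ on $(0,\ol{T})$, with the regime $t\ge\ol{T}$ trivial since $\ol{u}_\lambda\equiv 0$ there. The only cosmetic difference is that you unpack the $\delta$-approximation explicitly where the paper simply cites the remark, and you handle the extinction-time bookkeeping by restricting to $T<\ol{T}$ rather than arguing $\ol{T}\le T_\lambda$ as the paper does; both are equivalent.
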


\begin{proof}
Let $T_i\in (0, \infty]$ denote the extinction times of $u_i$ for $i\in \{0, 1, \lambda\}$. 
We have seen in Theorem~\ref{thm:3.1} that $\ol{u}_{\lambda}$ defined by \eqref{eq:3.2} is a viscosity subsolution of \eqref{positive pme}--\eqref{positive bdry} with $i=\lambda$ and extinction time $\ol{T}=\min\{T_0, T_1\}$. 
It is also not difficult to show that 
\begin{equation}\label{initial-relation}
\ol{u}_\lambda(\cdot, 0)\leq \phi_\lambda=u_\lambda(\cdot, 0) \quad \text{in $\Omega_\lambda$}
\end{equation}
under the condition \eqref{eq:3.4}.  By Remark \ref{rmk comparison}, we have $\ol{u}_\lambda\leq u_\lambda$ in $\Omega_\lambda \times [0, T_\ast)$, where $T_\ast=\min\{\ol{T}, T_\lambda\}$. In particular, it implies that $\ol{T}\leq T_\lambda$ and $T_\ast=\ol{T}$.  It is clear that $\ol{u}_\lambda(\cdot, t)\equiv 0\leq u_\lambda(\cdot, t)$ for all $t>T_\ast=\ol{T}$. Hence, $\ol{u}_\lambda\leq u_\lambda$ holds in $\Omega_\lambda\times (0, \infty)$. Our proof of \eqref{eq:3.5} is complete now. 
\end{proof}

Applying Proposition \ref{thm:3.4}, in the case of $0<m\leq 1$ we recover the $\alpha$-concavity preserving property for the Cauchy--Dirichlet problem for porous medium equation with $\alpha=(m-1)/2$ established in \cite{LV2}*{Lemma~4.6}. Our proof here is different from the one in \cite{LV2}, which directly tracks the evolution of the infimum of second derivatives of $u(\cdot, t)^{\alpha}$. 

\begin{cor}
Let $\Omega$ be a bounded smooth convex domain in $\R^n$ with $n\geq 1$. 
Let $0<m\le 1$. 
Assume that $\phi\in C(\overline{\Omega})$ with $\phi=0$ on $\partial \Omega$ and $\phi>0$ in $\Omega$. 
Let $\alpha=(m-1)/2$ and $u\in C(\overline{\Omega}\times [0, \infty))$ be a solution of  \eqref{eq:1.15}--\eqref{eq:1.17}. If $\phi$ is $\alpha$-concave in $\Omega$, then so is $u(\cdot, t)$ for all $t\geq 0$.
\end{cor}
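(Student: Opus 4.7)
The plan is to apply Proposition~\ref{thm:3.4} with all three domains and initial data chosen to be identical copies of $\Omega$ and $\phi$, exploiting the convexity of $\Omega$ to ensure the Minkowski combination stays inside $\Omega$.

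More precisely, first I would set $\Omega_0=\Omega_1=\Omega$. Since $\Omega$ is convex, we have
\[
\Omega_\lambda=(1-\lambda)\Omega+\lambda\Omega=\Omega,
\]
so we are free to take $\Omega_\lambda=\Omega$ as well. Then I would pick $\phi_0=\phi_1=\phi_\lambda=\phi$. The assumption that $\phi$ is $\alpha$-concave in $\Omega$ is precisely the statement
\[
\phi((1-\lambda)x_1+\lambda x_2)\ge M_\alpha(\phi(x_1),\phi(x_2);\lambda)
\quad\text{for all }x_1,x_2\in\Omega,
\]
which is exactly the hypothesis \eqref{eq:3.4} of Proposition~\ref{thm:3.4}. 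By uniqueness of the solution to the Cauchy--Dirichlet problem \eqref{eq:1.15}--\eqref{eq:1.17}, one has $u_0=u_1=u_\lambda=u$, so Proposition~\ref{thm:3.4} yields
\[
u((1-\lambda)x_1+\lambda x_2,t)\ge M_\alpha(u(x_1,t),u(x_2,t);\lambda)
\quad\text{for all }x_1,x_2\in\Omega,\, t>0.
\]
Together with the $\alpha$-concavity of $\phi=u(\cdot,0)$, this gives $\alpha$-concavity of $u(\cdot,t)$ on $\Omega$ for every $t\ge 0$.

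To conclude $\alpha$-concavity in the sense of the definition given in the introduction, I would check that the zero extension of $u(\cdot,t)$ outside $\Omega$ remains $\alpha$-concave on $\R^n$. If both $x_1,x_2\in\Omega$, convexity of $\Omega$ places $(1-\lambda)x_1+\lambda x_2$ inside $\Omega$ and the inequality is the one just proved. If either $x_1$ or $x_2$ lies outside $\Omega$, then $u$ vanishes there by extension, so by the convention $M_\alpha(a,b;\lambda)=0$ whenever $ab=0$ the right-hand side is zero and the inequality is automatic. Hence $u(\cdot,t)$ is $\alpha$-concave in $\Omega$ for all $t\ge 0$.

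I do not anticipate a genuine obstacle here: the statement is essentially a specialization of Proposition~\ref{thm:3.4} to the case when all three initial profiles coincide, made possible by the convexity hypothesis on $\Omega$. The only minor point worth spelling out is the reconciliation between the inequality obtained on $\Omega$ and the global $\alpha$-concavity convention used in the paper, which is handled by the zero-extension argument sketched above.
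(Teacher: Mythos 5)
Your argument is correct and is exactly the paper's proof: the authors also obtain the corollary by taking $\Omega_i=\Omega$, $\phi_i=\phi$ and $u_i=u$ for all $i\in\{0,1,\lambda\}$ in Proposition~\ref{thm:3.4}, using the convexity of $\Omega$ so that $\Omega_\lambda=\Omega$. Your extra remark reconciling the inequality on $\Omega$ with the zero-extension convention for $\alpha$-concavity is a sensible detail the paper leaves implicit.
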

This result can be easily obtained by taking $\Omega_i=\Omega$, $\phi_i=\phi$ and $u_i=u$ for all $i\in \{0, 1, \lambda\}$ in Proposition \ref{thm:3.4}.

\subsection{Superfast diffusion in one dimension}
Before we proceed to the proof of Theorem~\ref{thm:3.8}, let us recall well-posedness results about the Cauchy problem \eqref{eq:1.8}--\eqref{eq:1.9} with $n=1$ and $-1<m\leq 0$. Based on our needs, we reorganize the main results of \cite{ERV} in the following way.

\begin{thm}\label{thm:3.6}
Let $n=1$ and $-1<m\leq 0$. Let $\phi\in L^1(\R^n)$ be such that $\phi\geq 0$, $\phi\not\equiv 0$ in $\R$. 
Then there exists a unique positive  classical solution $u\in C^{\infty}(\R\times (0, \infty))\cap C([0, \infty); L^1(\R))$ of \eqref{eq:1.8}--\eqref{eq:1.9} satisfying 
\[
\int_{\R} u(x, t)\, dx=\int_{\R} \phi\, dx\quad \text{for all $t>0$.}
\]
 Moreover, $u$ has the following properties:
\begin{itemize}
\item[(i)] for any $t>0$, $u(\cdot, t)\in L^\infty(\R)$ and  
\begin{equation}\label{eq:3.7}
-{u\over (1-m)t} \leq \partial_t u\leq {u\over (1-m)t}; 
\end{equation}
\item[(ii)] for any $T>1$, $u$ satisfies 
\begin{equation}\label{eq:3.8}
u(x, t)=O\left(|x|^{2\over m-1}\right)\,\,\, \text{as $|x|\to \infty$, uniformly for $t\in [1/T, T]$.}
\end{equation}
\end{itemize} 
%u(x, t)^{m-1\over 2}=O(|x|) \quad \text{as $x\to \infty$ uniformly for $t\in [1/T, T]$.}
%\]
In addition, in the case $-1<m<0$, if we additionally assume that $\phi\in L^1(\R)\cap L^\infty(\R) \cap C^\infty(\R)$, $\phi>0$ and $\phi^m$ is Lipschitz continuous in $\R$, then $u\in C(\R\times [0, \infty))\cap L^\infty(\R\times (0, \infty))$.
\end{thm}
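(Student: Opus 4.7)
The plan is to deduce the statement by organizing existing results from the well-posedness theory of Esteban--Rodr\'iguez--V\'azquez \cite{ERV}, which is precisely tuned to the one-dimensional supercritical superfast range $-1 < m \leq 0$. No new PDE ideas are required; the work lies in extracting exactly the form of each ingredient needed.

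First I would invoke the existence and uniqueness theorem of \cite{ERV} for nonnegative initial data in $L^1(\R)$, which produces a positive smooth solution $u \in C^\infty(\R \times (0, \infty))$ with $u(\cdot, t) \to \phi$ in $L^1(\R)$ as $t \to 0^+$. Instantaneous strict positivity for $t > 0$ is part of the smoothing effect available in this regime. The mass identity $\int_\R u(\cdot, t)\,dx = \int_\R \phi\,dx$ is the distinctive feature of this exponent window and I would quote it directly from \cite{ERV}; the analogous statement fails at the critical value $m = -1$ or without extra decay in higher dimensions.

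Next, for the two-sided bound (i), I would apply the Aronson--B\'enilan type estimate. The lower bound $\partial_t u \geq -u/((1-m)t)$ follows from the classical homogeneity/scaling argument applied to the invariant family $u_\lambda(x,t) := \lambda u(x, \lambda^{m-1}t)$: monotonicity in $\lambda$ via the comparison principle, combined with differentiation at $\lambda = 1$, delivers the bound. The matching upper inequality $\partial_t u \leq u/((1-m)t)$ is the feature specific to the one-dimensional superfast setting of \cite{ERV}, and I would cite their formulation of it directly. The pointwise boundedness $u(\cdot, t) \in L^\infty(\R)$ for $t > 0$ then follows from the $L^1$--$L^\infty$ smoothing effect together with mass conservation. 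For the spatial decay (ii), the profile $\Phi(x) := C|x|^{2/(m-1)}$ is a stationary solution of $\partial_t w = m^{-1}\Delta w^m$ away from the origin, as a direct computation using $\Phi^m = C^m|x|^{2m/(m-1)}$ shows. Using $\Phi$ as a barrier on $\{|x| \geq R_0\} \times [1/T, T]$, with $R_0$ chosen so that $u \leq \Phi$ on $|x| = R_0$ (via the $L^\infty$ bound) and at the initial slice $t = 1/T$ (via the $L^1$ tail of the solution already obtained from ERV), the comparison principle delivers the pointwise decay, uniformly on $[1/T, T]$.

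Finally, for the enhanced regularity up to $t = 0$ in the case where $\phi$ is smooth, strictly positive, bounded, and $\phi^m$ is Lipschitz, I would work with $v := u^m$, which satisfies a quasilinear parabolic equation. Finite-difference quotients of $v$ satisfy a maximum principle, so the Lipschitz bound on $v(\cdot, 0) = \phi^m$ propagates uniformly in $t$. Combined with strict positivity preserved from the initial datum and the $L^\infty$ bound from (i), this yields equicontinuity of $u$ on compact space-time sets down to $t = 0$, hence $u \in C(\R \times [0, \infty))$. The main obstacle I anticipate is matching the precise formulation of each result in \cite{ERV} to the statement here, in particular ensuring that the smoothing and decay estimates are genuinely uniform on compact subintervals $[1/T, T]$ rather than only locally in time, and that the equi-Lipschitz propagation for $v$ in the enhanced-regularity case is compatible with the two-sided time derivative bound near $t = 0$.
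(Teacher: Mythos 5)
Your overall route is the same as the paper's: Theorem~\ref{thm:3.6} is presented there as a reorganization of \cite{ERV} (Theorem~1, Proposition~2.6, Theorem~2, and estimate (2.4) therein), and your plan of quoting existence, uniqueness, mass conservation, the two-sided time-derivative bound, the decay, and the up-to-$t=0$ regularity from that paper is exactly what is done. However, one of the sub-arguments you offer in place of a citation is wrong. You claim that $\Phi(x):=C|x|^{2/(m-1)}$ is a stationary solution of $\partial_t w=\tfrac1m\Delta(w^m)$ away from the origin and propose to use it as an upper barrier for (ii). A direct computation in $n=1$ gives $(\Phi^m)''=C^m\,p(p-1)|x|^{p-2}$ with $p=2m/(m-1)\in(0,1)$ for $m\in(-1,0)$, so $p(p-1)<0$ and hence $\tfrac1m(\Phi^m)''>0$ (and for $m=0$ one gets $(\log\Phi)''=2/x^2>0$). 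Thus $\Phi$ is stationary only at the excluded endpoint $m=-1$; for the range at hand it is a strict \emph{subsolution}, so the comparison runs in the wrong direction and $\Phi$ cannot serve as an upper barrier. Relatedly, controlling $u$ at the slice $t=1/T$ by the $L^1$ tail does not yield the pointwise bound $u\le\Phi$ you need there. The correct statement \eqref{eq:3.8} comes from the explicit time-dependent asymptotic profiles of \cite{ERV}*{Theorem~2} (tails of the form $\bigl(2t/((1-m)x^2)\bigr)^{1/(1-m)}$), which is what the paper cites; either cite that directly or replace your barrier by such a time-dependent supersolution.

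A smaller point: for $m<1$ the scaling family $u_\lambda(x,t)=\lambda u(x,\lambda^{m-1}t)$ together with comparison and differentiation at $\lambda=1$ yields the \emph{upper} bound $\partial_t u\le u/((1-m)t)$ (the classical Aronson--B\'enilan side), whereas it is the \emph{lower} bound in \eqref{eq:3.7} that is the extra one-dimensional superfast feature of \cite{ERV}*{Proposition~2.6}; you have the two roles reversed. Since both inequalities are contained in the cited proposition this does not sink the argument, but as written your derivation of the lower bound would not go through.
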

These results can be found in \cite{ERV}*{Theorem~1, Proposition~2.6, Theorem~2}. 
The last statement above is an intermediate result of the first step of the proof of \cite{ERV}*{Theorem~1}. 
The boundedness of $u$ can be obtained from the estimate (2.4) in \cite{ERV}. It is worth remarking that in the proof of \cite{ERV}*{Theorem~1}, there is an important approximation to construct the solution $u$ by taking $\sigma\to 0$ for the classical solution of the Cauchy problem with initial data $u_0+\sigma$ in $\R$ with $\sigma>0$ small. We shall use this approximation to prove our comparison principle later.  

\begin{rem}\label{thm:3.7}
The inequality in \eqref{eq:3.7} enables us to obtain further results for our later use. 
For a bounded solution $u$ of  \eqref{eq:1.8}--\eqref{eq:1.9} with $n=1$ , $-1<m<0$, 
and a positive initial value $\phi\in L^1(\R)\cap L^\infty(\R) \cap C^\infty(\R)$ such that $\phi^m$ Lipschitz in $\R$,  %\textcolor{blue}{(Is it too long? How about the rule of A, B and C?)}, 
we can show that there exists $C>0$ such that 
\begin{equation}\label{eq:3.9}
\sup_{x\in \R}|u(x, t)-\phi(x)|\leq Ct^{1\over 2} \quad \text{for all $t>0$.}
\end{equation}
In fact, for such $\phi$,  %$$\phi\in L^1(\R)\cap L^\infty(\R) \cap C^\infty(\R)$ with $\phi>0$ and $\phi^m$ Lipschitz in $\R$, 
%\textcolor{red}{
setting $\hat{u}(x, t)=u(x, t^2)$, by \eqref{eq:3.7}, we have 
%\textcolor{blue}{\sout{we have  \eqref{eq:3.7} for the solution $u$ of \eqref{eq:1.8}\eqref{eq:1.9} with $n=1$. We thus obtain that for all $(x, t)\in Q$, $\hat{u}(x, t)=u(x, t^2)$ satisfies}} 
\[
|\partial_t \hat{u}|\leq {2u\over 1-m}\quad \text{ in\,  $\R\times (0, \infty)$,}
\]
%\textcolor{red}{for all $(x, t)\in Q$},
which, by the boundedness of $u$, immediately yields 
\[
\sup_{x\in \R} |\hat{u}(x, t)-\phi(x)|\leq Ct
\]
for some $C>0$. We are thus led to \eqref{eq:3.9}.
\end{rem}

We obtain the following subsolution property of the Minkowski convolution in the case $n=1$ and $-1<m<0$. 
 \begin{thm}
\label{thm:3.8}
%Let $\{\Omega_i\}_{i=1}^k$ be bounded smooth domains in $\R^n$ 
%and $\Omega_\lambda$ the domain defined by~%eqref{eq:1.1ax}%, 
Let $n=1$ and $-1<m<0$. Let $\phi\in L^1(\R)$ and $\phi> 0$ in $\R$. %$\phi\not\equiv 0$
Suppose that for each $i=1, 2$, $u_i \in C^\infty(\R^n\times (0, \infty))\cap C([0, \infty); L^1(\R))$ is a positive bounded solution of \eqref{eq:1.8}--\eqref{eq:1.9}. 
For $\alpha=(m-1)/2$ and a fixed $\lambda\in (0, 1)$, let $u_{\lambda}$ be the $\alpha$-Minkowski convolution of $u_0, u_1$, defined by 
\eqref{eq:1.12}. 
%%label{eq:4.2}%
%\begin{split}
 %\item[{\rm (G)}] If $m>1$, then for any $x\in \partial \Omega_i$ with $i\in \{1, \ldots, k\}$ and $t>0$, 
% \[
%{1\over \rho} u_i(x+\rho\nu, t)^p\to \infty\quad \text{as $\rho\to 0$.}
% \] 
Then $u_{\lambda}$ is a viscosity subsolution of \eqref{eq:1.8}. 
\end{thm}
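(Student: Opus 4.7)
The plan is to reduce the statement to Theorem~\ref{thm:2.2} in its whole-space variant (Remark~\ref{thm:2.4}), with $k=2$, $T=\infty$, and $\Omega_0=\Omega_1=\R$. The proof will closely mirror that of Theorem~\ref{thm:3.1}: once the structural condition (F) and the decay condition \eqref{eq:2.15} are verified, the conclusion follows immediately from the general Crandall--Ishii argument of Theorem~\ref{thm:2.2}. What changes compared to the bounded-domain case is only how the two hypotheses are produced, since here there is no boundary condition to drive $u_i^\alpha$ to $+\infty$ at the edge of the domain.

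First, I would rewrite \eqref{eq:1.8} in the form $\partial_t u + F(x,t,u,\nabla u,\nabla^2 u)=0$ with
\[
F(x,t,r,\theta,A) := -r^{m-1}\tr A - (m-1)r^{m-2}|\theta|^2,
\]
choosing $F_0=F_1=F_\lambda=F$. With $\alpha=(m-1)/2$, exactly the same calculation as in Theorem~\ref{thm:3.1} yields
\[
\mathcal F^{\theta,t}_{i,\alpha}(x,r,A) = r^{1-\frac1\alpha}F\bigl(x,t,r^{\frac1\alpha},r^{\frac1\alpha-1}\theta,r^{\frac1\alpha-3}A\bigr) = -\tr A - (m-1)\,r\,|\theta|^2,
\]
which is affine in $(r,A)$ for each fixed $(\theta,t)$. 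Hence condition (F) holds with equality, independently of $\lambda$.

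Second, I would verify the decay condition \eqref{eq:2.15}. By Theorem~\ref{thm:3.6}(ii), each bounded positive classical solution $u_i$ satisfies
\[
u_i(x,t) = O\bigl(|x|^{2/(m-1)}\bigr) \qquad \text{as } |x|\to\infty,
\]
uniformly for $t$ in compact subsets of $(0,\infty)$. Since $1/\alpha = 2/(m-1)$ in the superfast regime, this is precisely the decay rate $|x|^{1/\alpha}$ requested by Remark~\ref{thm:2.4}. Combined with the strict positivity and smoothness of $u_i\in C^\infty(\R\times(0,\infty))$ from Theorem~\ref{thm:3.6}, this ensures that for each $(x,t)\in\R\times(0,\infty)$ the supremum in \eqref{eq:1.12} is realized at some pair $y,z\in\R$.

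With both ingredients in place, the proof of Theorem~\ref{thm:2.2} applies verbatim — one localizes the test function $\varphi\in C^2$ touching $\bar u_\lambda$ from above at an interior point, uses Remark~\ref{rmk jets} with semijets (or simply classical derivatives, since here the $u_i$ are smooth) at the attained maximizer, and exploits the linearity of $\mathcal F^{\theta,t}_{i,\alpha}$ to obtain the required inequality. The only genuinely non-routine point — and the place where the argument could have failed in this regime — is the existence of the maximizer in the definition of $\bar u_\lambda$; in the Cauchy--Dirichlet setting of Theorem~\ref{thm:3.1} this came for free from the boundary blow-up of $u_i^\alpha$ ($\alpha<0$), whereas here it is supplied precisely by the sharp spatial decay \eqref{eq:3.8} coming from \cite{ERV}. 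I would therefore isolate that attainment step as a short preliminary lemma before invoking Theorem~\ref{thm:2.2}, and the proof would then conclude immediately.
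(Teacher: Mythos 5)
Your proposal is correct and follows essentially the same route as the paper: verify that condition (F) holds with the same operator $F$ as in the case $0<m<1$ (it is unchanged, since the computation of $\mathcal F^{\theta,t}_{i,\alpha}$ is identical), and then invoke the whole-space variant of Theorem~\ref{thm:2.2} from Remark~\ref{thm:2.4}, with the decay hypothesis \eqref{eq:2.15} supplied by the estimate \eqref{eq:3.8} of Theorem~\ref{thm:3.6}(ii), which guarantees attainment of the supremum in \eqref{eq:1.12}. You correctly identify the attainment of the maximizer as the one point requiring care in the unbounded setting; no gap.
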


\begin{proof}
Note that the operator $F_i$ stays the same as the case $0<m<1$ and therefore assumption (F) in Theorem~\ref{thm:2.2} still holds. Since $u_i$ is a solution of \eqref{eq:1.8} satisfying \eqref{eq:3.8}, by Remark \ref{thm:2.4},  we see that $\ol{u}_{\lambda}$ defined by \eqref{eq:1.12} is a positive viscosity subsolution to \eqref{eq:1.8} with $i=\lambda$. 
\end{proof}

Next, we present a comparison result in this case.

\begin{prop}\label{thm:3.9}
%Let $\Omega$ be a bounded domain in $\R^n$.
%Set $D:=\Omega \times (0, \infty)$. 
Let $n=1$, $-1<m< 0$ and $Q=\R \times (0, \infty)$. 
Assume that $u\in USC(\ol{Q})$ is a bounded nonnegative viscosity subsolution 
 and $v\in C^{2}(Q)\cap C(\ol{Q})$ is a bounded nonnegative solution of \eqref{eq:1.8}. Assume that $u$ satisfies \eqref{eq:3.8}. If 
\begin{equation}\label{eq:3.10}
\sup_{x\in \R}\,(u(x, t)-v(x, t))\to 0\quad \text{as $t\to 0$, }
\end{equation}
then $u\leq v$ in $\ol{Q}$.
\end{prop}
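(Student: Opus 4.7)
The plan is to adapt the double-penalty contradiction argument of Proposition~\ref{thm:3.2} to the unbounded domain, leaning crucially on the decay estimate~\eqref{eq:3.8} for $u$ and the asymptotic initial condition~\eqref{eq:3.10}. Suppose for contradiction that $u - v$ takes a positive value somewhere in $\overline Q$. Since $v$ is bounded and $u(x,t) \to 0$ as $|x|\to\infty$ uniformly on each compact subinterval of $(0,\infty)$ by \eqref{eq:3.8}, we have $(u-v)(x,t) \to -v(x,t) \leq 0$ as $|x|\to\infty$, uniformly in such $t$. Combined with \eqref{eq:3.10}, this guarantees that the positive supremum of $u - v$ on $\overline Q$ is attained at some $(\hat x, \hat t)\in \mathbb{R}\times (0,\infty)$.

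For $\tau>0$ small, I introduce the two penalized functionals
\[
\Psi_\epsilon(x,t) := u(x,t) - v(x,t) - \frac{2\epsilon}{\hat t + \tau - t}, \qquad \Phi_\epsilon(x,t) := u(x,t) - v(x,t) - \frac{\epsilon}{\hat t + \tau - t}
\]
on $\overline{\mathbb{R}}\times [0, \hat t + \tau)$. Exactly as in Proposition~\ref{thm:3.2}, an intermediate value argument in $\epsilon$ produces $\epsilon=\epsilon(\tau)>0$ such that $\sup \Psi_\epsilon = 0$; here the supremum is genuinely attained because the time penalty rules out $t\to \hat t + \tau$, the spatial decay in \eqref{eq:3.8} rules out $|x|\to\infty$, and \eqref{eq:3.10} forces $\limsup_{t\to 0^+}\Psi_\epsilon \leq -2\epsilon/(\hat t+\tau)<0$, ruling out $t\to 0^+$. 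The same reasoning yields a maximizer $(x_\epsilon, t_\epsilon)\in \mathbb{R}\times (0, \hat t + \tau)$ of $\Phi_\epsilon$ with the two-sided bound
\[
0 < \frac{\epsilon}{\hat t + \tau - t_\epsilon} \leq u(x_\epsilon, t_\epsilon) - v(x_\epsilon, t_\epsilon) \leq \frac{2\epsilon}{\hat t + \tau - t_\epsilon},
\]
so in particular $u>v>0$ at $(x_\epsilon, t_\epsilon)$ and $u(x_\epsilon, t_\epsilon)$ is locally bounded. Letting $\tau,\epsilon(\tau)\to 0$, along a subsequence $(x_\epsilon, t_\epsilon)\to(\hat x, \hat t)$ and in particular $\hat t + \tau - t_\epsilon \to 0$.

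I then apply the viscosity subsolution inequality for $u$ with the test function $\varphi(x,t) = v(x,t) + \epsilon/(\hat t + \tau - t)$ at $(x_\epsilon, t_\epsilon)$. Writing \eqref{eq:1.8} in the form $\partial_t u + F(u,\nabla u, \nabla^2 u)=0$ with $F(r,\xi, A) = -r^{m-1}\operatorname{tr} A - (m-1)r^{m-2}|\xi|^2$ as in \eqref{eq:3.3}, and subtracting the equation $\partial_t v + F(v, v_x, v_{xx}) = 0$ satisfied by the classical solution $v$, I obtain at $(x_\epsilon, t_\epsilon)$
\[
\frac{\epsilon}{(\hat t + \tau - t_\epsilon)^2} + F(u, v_x, v_{xx}) - F(v, v_x, v_{xx}) \leq 0.
\]
Since $u$ and $v$ are bounded above and below away from zero in a neighborhood of $(\hat x, \hat t)$, and $v_x$, $v_{xx}$ are locally bounded there, $r\mapsto F(r, v_x, v_{xx})$ is Lipschitz on the relevant range, so the bracketed difference is bounded by $C|u-v|\leq 2C\epsilon/(\hat t + \tau - t_\epsilon)$, and hence $\hat t + \tau - t_\epsilon \geq 1/(2C)$, contradicting $\hat t + \tau - t_\epsilon \to 0$. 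The main technical hurdle is precisely the compactness argument that pins the maximizers $(x_\epsilon, t_\epsilon)$ in a fixed compact set of $\mathbb{R}\times(0,\infty)$ and drives them to $(\hat x, \hat t)$: this is the step that genuinely requires both the superfast-diffusion spatial decay \eqref{eq:3.8} and the initial compatibility \eqref{eq:3.10}, without which the penalty approach alone could not localize the maximizer and the final Lipschitz estimate could not be closed.
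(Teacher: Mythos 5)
Your overall strategy---the double time-penalization combined with the decay \eqref{eq:3.8} to localize maximizers---is the paper's strategy, but there are two genuine gaps. The main one is your choice of $(\hat x,\hat t)$ as a \emph{global maximizer} of $u-v$, together with the ensuing claim that the maximizers $(x_\varepsilon,t_\varepsilon)$ of $\Phi_\varepsilon$ converge to $(\hat x,\hat t)$, so that $\hat t+\tau-t_\varepsilon\to 0$. This is unjustified and in general false. Writing $S(t)=\sup_x\,(u-v)(x,t)$, the normalization $\sup\Psi_\varepsilon=0$ forces $2\varepsilon=\sup_t S(t)(\hat t+\tau-t)$ (up to the usual caveats), so if $S$ is positive at some time $t''$ well before $\hat t$, then $\varepsilon(\tau)$ does \emph{not} tend to $0$, the touching point of $\Phi_\varepsilon$ sits near $t''$, and $\hat t+\tau-t_\varepsilon\to\hat t-t''>0$. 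Your final inequality $\hat t+\tau-t_\varepsilon\ge 1/(2C)$ is then not contradicted. The correct choice (the one in the paper) is to take $\hat t$ as the first time after which $\sup_x(u-v)$ becomes positive, so that $u\le v$ on $\R\times[0,\hat t]$; then any point where $\Phi_\varepsilon>0$ must have $t_\varepsilon>\hat t$, hence $\hat t+\tau-t_\varepsilon<\tau$, and choosing $\tau<1/(2C)$ closes the contradiction.

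The second gap is that you compare $u$ directly with $v$ instead of first lifting $v$ to the solution $v_\delta$ with initial datum $v(\cdot,0)+\delta$ (so that $v_\delta\ge\delta$ in $Q$), as the paper does before sending $\delta\to0$. This lift is not cosmetic. Since $m<0$, the map $r\mapsto F(r,\xi,A)=-r^{m-1}\operatorname{trace}A-(m-1)r^{m-2}|\xi|^2$ is singular at $r=0$, so your Lipschitz-in-$r$ step needs the comparison function to be bounded below by a fixed positive constant at the touching points. Moreover, \eqref{eq:3.10} alone gives only $\limsup_{t\to0}\sup_x(u-v)\le0$, not an actual ordering on an initial time strip; without the $\delta$-lift the first-crossing time could be $\hat t=0$ and the touching points could drift toward $t=0$, where $v$ is neither $C^2$ nor uniformly positive (it is only assumed in $C^2(Q)\cap C(\ol Q)$ and nonnegative). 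With $v_\delta$ one gets the strict ordering $u<v_\delta$ on $\R\times[0,s)$ for some $s>0$, hence $\hat t\ge s>0$, the touching points stay in a fixed compact subset of $Q$ where $v_\delta\ge\delta$, and the Lipschitz estimate holds with a controlled constant; the conclusion $u\le v$ follows by letting $\delta\to0$.
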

\begin{proof}
Similarly to the strategy pointed out in Remark \ref{rmk comparison}, we implement an approximation of $v$ by considering the solution $v_\delta$ of \eqref{eq:1.8} with initial value $v(\cdot, 0)+\delta$ in $\R$. It is shown in the proof of \cite{ERV}*{Theorem~1} that $v_\delta\in C^\infty(Q)$ with $v_\delta\geq \delta$ and $v_\delta\to v$ locally uniformly in $Q$ as $\delta\to 0$. 

In order to prove $u\leq v$ in $Q$, it thus suffices to show $u\leq v_\delta$ for every fixed $\sigma>0$. By \eqref{eq:3.10}, we have $u\leq v_\delta$ in $\R\times [0, s)$ for $s>0$ sufficiently small. Suppose that there exists $\hat{t}> 0$ such that $u(\cdot, \hat{t})\leq v_\delta(\cdot, \hat{t})$ in $\R$ and $\sup_\R (u(\cdot, s_j)-v(\cdot, s_j)>0$ for a sequence $s_j>\hat{t}$ with $s_j\to \hat{t}+$. 

Thanks to the decay condition \eqref{eq:3.8}, we can restrict our comparison argument in a compact set of $\overline{Q}$. Following the proof of Proposition \ref{thm:3.2}, for $\tau>0$ small, we take $\vep>0$ small such that
\[
\Phi_\vep(x, t):=u(x, t)-v_\delta(x, t)-{\vep\over \hat{t}+\tau-t}, \quad (x, t)\in \R\times [\hat{t}, \hat{t}+\tau)
\]
satisfies 
 % for any $\sigma>0$ when $T>t_0$ is taken sufficiently large. We fix such $T$. Then there exists $c>0$ such that 
\begin{equation}\label{1d-comparison1}
0<\sup_{\R\times [\hat{t}, \hat{t}+\tau)} \Phi_\vep \leq {\vep\over \hat{t}+\tau-t}. 
\end{equation}
for $\delta>0$ small. 
%For $\tau>0$ small, take $\vep>0$ small such that
%\[
%\Phi_\vep(x, t):=u(x, t)-v_\delta(x, t)-{\vep\over \hat{t}+\tau-t}, \quad (x, t)\in %\R\times [\hat{t}, \hat{t}+\tau)
%\]
%satisfies 
 % for any $\sigma>0$ when $T>t_0$ is taken sufficiently large. We fix such $T$. Then there exists $c>0$ such that 
%\begin{equation}\label{1d-comparison1}
%0<\sup_{\R\times [\hat{t}, \hat{t}+\tau)} \Phi_\vep \leq {\vep\over \hat{t}+\tau}. 
%\end{equation}
%for $\sigma>0$ small. 
%\textcolor{blue}{(It is better to follow explanation of Proposition 3.2.)}\textcolor{magenta}{(Qing: I now mention without details that one can follow the proof of Proposition 3.2 to obtain \eqref{1d-comparison1}. Maybe this is enough, but I can write more if needed.)} {\color{green} For me it's enough}.
Since \eqref{eq:3.8} holds for $u$, $\Phi_\vep$ attains a maximum over $\R\times [\hat{t}, \hat{t}+\tau)$ at a point $(x_\vep, t_\vep)$. The decay condition \eqref{eq:3.8} actually implies that $x_\vep$ is bounded uniformly for all such $\tau, \vep>0$. 

The rest of our proof is essentially the same as that of Proposition \ref{thm:3.2}. It follows from \eqref{1d-comparison1} that
\begin{equation}
{\vep\over \hat{t}+\tau-t_\vep}   \leq u(x_\vep, t_\vep)-v_\delta(x_\vep, t_\vep)\leq {2\vep\over \hat{t}+\tau-t_\vep}.
\end{equation}
Noticing that $\varphi(x, t)=v_\delta(x, t)+{\vep\over \hat{t}+\tau-t}$ serves as a test function for $u$ at $(x_\vep, t_\vep)$, we use the definition of viscosity subsolutions to obtain 
\[
\partial_t v_\delta(x_\vep, t_\vep)+ {\vep\over (\hat{t}+\tau-t_\vep)^2}+ F\left(u(x_\vep, t_\vep), \partial_x v_\delta (x_\vep, t_\vep), \partial_{x}^2 v_\delta(x_\vep, t_\vep)\right) \leq 0,
\]
where $F$ is given by \eqref{eq:3.3}.
%\[
%\partial_t v+{\sigma \over (T-\hat{t})^2}+F(v+\vep\psi, \nabla  (v+\vep\psi), \nabla^2(v+\vep\psi)) \leq 0\quad \text{at $(\hat{x}, \hat{t})$},
%\]
%\[
%\partial_t v_\delta(x_\vep, t_\vep)+{\vep\over (\hat{t}+\tau-t_\vep)^2}+ F(u(x_\vep, t_\vep), v_\delta(x_\vep, t_\vep), \nabla^2 v_\delta(x_\vep, t_\vep)) \leq 0,
%\]
Since $x_\vep$ is uniformly bounded in $\R$, we apply the uniform boundedness of $\partial_x v_\delta(x_\vep, t_\vep)$ and $\partial_x^2 v_\delta(x_\vep, t_\vep)$ to obtain Lipschitz regularity of 
\[
r\mapsto F\left(r, \partial_x v_\delta(x_\vep, t_\vep), \partial_x^2 v_\delta(x_\vep, t_\vep)\right)
\]
near $r=v_\delta(x_\vep, t_\vep)$. %, which yields \eqref{lip-comparison} for some $C>0$. 
We are therefore led to 
\[
\partial_t v_\delta(x_\vep, t_\vep)+ F\left(v(x_\vep, t_\vep), \partial_x v_\delta(x_\vep, t_\vep), \partial_x^2 v_\delta(x_\vep, t_\vep)\right) 
\leq {C\vep\over \hat{t}+\tau-t_\vep}-{\vep\over (\hat{t}+\tau-t_\vep)^2}
\]
for some $C>0$. 
Adopting the supersolution property of $v_\delta$, we again end up with the contradiction \eqref{contradiction-comparison} when $\tau, \vep>0$ are taken small. It thus follows that $u\leq v_\delta$ in $Q$. 
%This is a contradiction to the assumption that $v$ is a classical supersolution. 
We complete the proof by letting $\delta\to 0$. 
\end{proof}

We are now in a position to prove an analogue of Theorem~\ref{thm:1.3} in the case $n=1$ and $0<m<1$ under additional assumptions on $\phi_i$.
\begin{thm}\label{thm:3.10}
Let $n=1$, $0<m< 1$ and $0<\lambda<1$. Let $Q=\R\times (0, \infty)$. For $i\in \{0, 1, \lambda\}$, let $\phi_i\in L^1(\R)\cap L^\infty(\R) \cap C^\infty(\R)$ with $\phi_i>0$ and $\phi_i^m$ Lipschitz in $\R$. Assume that $u_i\in C^\infty(Q)\cap C(\ol{Q})$ is a positive bounded classical solution of \eqref{eq:1.8}--\eqref{eq:1.9} with $\phi=\phi_i$ for $i\in \{0, 1, \lambda\}$. Let $\alpha=(m-1)/2$. If \eqref{eq:1.13} holds for all $y, z\in \R$, then \eqref{eq:1.14} holds for all $y, z\in \R$ and $t>0$. 
\end{thm}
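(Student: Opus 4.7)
The plan is to mimic the three-step strategy of the proof of Theorem~\ref{thm:1.3} in the regime $0 < m \le 1$, using Theorem~\ref{thm:3.8} in place of Theorem~\ref{thm:3.1} (for the subsolution property on all of $\R$) and Proposition~\ref{thm:3.9} in place of Proposition~\ref{thm:3.2} (as the comparison principle on $\ol{Q}$). Writing $\ol{u}_\lambda$ for the spatial $\alpha$-Minkowski convolution of $u_0$ and $u_1$ defined in \eqref{eq:1.12}, the goal reduces to showing $\ol{u}_\lambda \le u_\lambda$ on $\ol{Q}$.

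First, the regularity imposed on $\phi_0$ and $\phi_1$ is exactly the hypothesis of the last part of Theorem~\ref{thm:3.6}, so the $u_i$ in the statement are positive bounded classical solutions in $C^\infty(Q) \cap C(\ol{Q})$ satisfying the spatial decay \eqref{eq:3.8}. Applying Theorem~\ref{thm:3.8} to the pair $(u_0, u_1)$ yields that $\ol{u}_\lambda$ is a viscosity subsolution of \eqref{eq:1.8} in $Q$. Moreover, by the discussion at the end of Remark~\ref{thm:2.4}, $\ol{u}_\lambda$ inherits the decay \eqref{eq:3.8} from $u_0$ and $u_1$, so $\ol{u}_\lambda$ fits the hypotheses of Proposition~\ref{thm:3.9}.

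The delicate step is to verify the initial compatibility \eqref{eq:3.10} for $(\ol{u}_\lambda, u_\lambda)$. Here I would invoke Remark~\ref{thm:3.7}: each $u_i$ obeys $\sup_{x \in \R} |u_i(x, t) - \phi_i(x)| \le C t^{1/2}$. The monotonicity of $M_\alpha$ in each argument and the assumption \eqref{eq:1.13} give
\[
\ol{u}_\lambda(x, t) \le \sup_{x = (1-\lambda)y + \lambda z} M_\alpha\bigl(\phi_0(y) + Ct^{1/2},\ \phi_1(z) + Ct^{1/2};\ \lambda\bigr).
\]
A direct computation shows that $s \mapsto M_\alpha(a+s, b+s; \lambda)$ has derivative $(M_\alpha/M_{\alpha-1})^{1-\alpha}$, and the ratio $M_\alpha(a,b;\lambda)/M_{\alpha-1}(a,b;\lambda)$ depends only on $a/b$ and extends continuously with finite, positive limits as $a/b \to 0$ or $\infty$; hence $s \mapsto M_\alpha(a+s, b+s;\lambda)$ is Lipschitz on $[0, \infty)$ with a constant $K = K(\alpha, \lambda)$ independent of $a, b > 0$. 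Consequently the right-hand side above is bounded by $\phi_\lambda(x) + KC t^{1/2}$. Combined with the matching lower bound $u_\lambda(x, t) \ge \phi_\lambda(x) - Ct^{1/2}$, this yields $\sup_{x \in \R} \bigl(\ol{u}_\lambda - u_\lambda\bigr)(x, t) \le (K + 1)C t^{1/2} \to 0$ as $t \to 0^+$, which is \eqref{eq:3.10}.

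With the preceding steps in place, Proposition~\ref{thm:3.9} immediately gives $\ol{u}_\lambda \le u_\lambda$ in $\ol{Q}$, which unpacks to \eqref{eq:1.14}. The principal obstacle is the uniform-in-$x$ initial compatibility: passing to the whole space removes the spatial compactness available in the Cauchy--Dirichlet setting, and the power mean $M_\alpha$ is not uniformly continuous on $(0, \infty)^2$, so a uniform modulus of continuity along the diagonal translation direction---together with the $t^{1/2}$ time-continuity of the $u_i$ granted by the Lipschitz hypothesis on $\phi_i^m$---is exactly what makes the comparison principle applicable at the initial time.
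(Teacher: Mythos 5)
Your proposal is correct and follows essentially the same route as the paper's own proof: the subsolution property of $\ol{u}_\lambda$ via Theorem 3.8 (with the decay inherited as in Remark 2.5), the $t^{1/2}$ initial-time estimate of Remark 3.7 combined with a uniform Lipschitz bound for $s\mapsto M_\alpha(a+s,b+s;\lambda)$ to verify the compatibility condition, and finally the comparison principle of Proposition 3.9. The only cosmetic difference is that the paper bounds the derivative explicitly between $1$ and $\min\{1-\lambda,\lambda\}^{1/\alpha}$, whereas you obtain the uniform constant by a homogeneity and limiting argument; both are valid.
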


\begin{proof}
By Theorem~\ref{thm:3.6}, \eqref{eq:3.8} holds for all $u_i$. In view of Theorem~\ref{thm:3.8}, we see that $\ol{u}_{\lambda}$ defined by \eqref{eq:1.12} is a viscosity subsolution of \eqref{eq:1.8} 
satisfying the condition Theorem~\ref{thm:3.6}~(ii).

Let us adopt the estimates in Remark \ref{thm:3.7}. 
Utilizing \eqref{eq:3.9} for $u_0$ and $u_1$, we have
\begin{equation}\label{1dcauchy comparison}
\ol{u}_\lambda(x, t) \leq \sup\,\{M_\alpha(\phi_0(y)+Ct^{1\over 2}, \phi_1(z)+Ct^{1\over 2}; \lambda): x=(1-\lambda)y+\lambda z\},
\quad x\in{\mathbb R},
\end{equation}
for some $C>0$. 
For any $a, b\ge 0$, set
\[
g(s)=((1-\lambda)(a+s)^\alpha+\lambda (b+s)^\alpha)^{1\over \alpha}\quad\mbox{for $s\ge 0$}.
\]
Then we have, for any $s>0$, 
\begin{equation}\label{g-prime}
g'(s)=\frac{(1-\lambda) (a+s)^{\alpha-1}+\lambda (b+s)^{\alpha-1}}{\left((1-\lambda)(a+s)^\alpha+\lambda (b+s)^\alpha\right)^{\alpha-1\over \alpha}}=\left(\frac{M_{\alpha-1}(a+s, b+s; \lambda)}{M_{\alpha}(a+s, b+s; \lambda)}\right)^{\alpha-1}.
\end{equation}
We obtain $g'(s)\geq 1$ for all $s>0$ due to the property that 
\[
M_{\alpha-1}(a+s, b+s; \lambda)\leq M_{\alpha}(a+s, b+s; \lambda).
\]
On the other hand, the first expression of $g'(s)$ in \eqref{g-prime} also yields
\[
g'(s)^{1\over p}=\frac{\left((1-\lambda)^{1\over \alpha} A^p+\lambda^{1\over \alpha}B^p\right)^{1\over p}}{A+B}\leq \min\{1-\lambda, \lambda\}^{1 \over \alpha p}\frac{(A^p+B^p)^{1\over p}}{A+B}\leq \min\{1-\lambda, \lambda\}^{1 \over \alpha p},
\]
where $p={\alpha-1\over \alpha}>1$, $A=(1-\lambda)(a+s)^\alpha$ and $B=\lambda(b+s)^\alpha$. %It follows that $g'(s)\leq \min\{1-\lambda, \lambda\}^{1\over \alpha}$.
Hence, we have shown the following estimate: 
\begin{equation}\label{eq:3.11}
1\leq g'(s)\leq \min\{1-\lambda, \lambda\}^{1\over \alpha} \quad \text{for } s>0.
\end{equation}
In view of \eqref{1dcauchy comparison}, we therefore can find $\tilde{C}>0$ such that 
\[
\ol{u}_\lambda(x, t) \leq \sup\{M_\alpha(\phi_0(y), \phi_1(z); \lambda): x=(1-\lambda)y+\lambda z\}+\tilde{C}t^{1\over 2}
\]
for all $t>0$. The relation \eqref{eq:1.13} thus enables us to get
\[
\ol{u}_\lambda(x, t) \leq \phi_\lambda(x)+\tilde{C}t^{1\over 2}
\]
for all $(x, t)\in Q$. Applying \eqref{eq:3.9} to $u_\lambda$, we further deduce that 
\[
\sup_{x\in \R}\left(\ol{u}_\lambda(x, t) -u_\lambda(x, t)\right)\leq (C+\tilde{C})t^{1\over 2}
\]
for all $t>0$ small. 
% In addition, it is also not difficult to see that 
%\[
%\ol{u}_{\lambda}(\cdot, 0)\leq \phi_\lambda=u_{\lambda} \quad \text{in $\R^n$}
%\] 
%under the condition \eqref{eq:1.13}.  
By Proposition \ref{thm:3.9}, we obtain $\ol{u}_{\lambda}\leq u_\lambda$ in $\R^n\times (0, \infty)$, which amounts to saying that \eqref{eq:1.14} holds. Our proof is complete now.
\end{proof}

%\textcolor{blue}{How about $m=0$? It is not needed in our arguments, but .... In the beginning of this subsection we said that we treat the case of $m=0$...}

%%%%%%%%%%%%%%%%%%%%%%%%%%%%%%%%%%%%%%
%%%%%%%%%%%%%%%%%%%%%%%%%%%%%%%%%%%%%%
\section{Borell--Brascamp--Lieb inequality}\label{section:4}
%%%%%%%%%%%%%%%%%%%%%%%%%%%%%%%%%%%%%%
%%%%%%%%%%%%%%%%%%%%%%%%%%%%%%%%%%%%%%

In this section, based on the results of our preceding sections, we complete our proof of Theorem~\ref{thm:1.1}.
One of the key ingredients for this part is the large time asymptotics for  \eqref{eq:1.8}--\eqref{eq:1.9}. It is known that when $0<m<1$, $n(m-1)+2>0$ and $\phi$ satisfies
\begin{equation}\label{eq:4.1}
\phi(x)=O\left(|x|^{\frac{2}{m-1}}\right) \quad \text{as $|x|\to \infty$,}
\end{equation}
the solution $u$ satisfies 
\begin{equation}\label{eq:4.2}
t^{n\over d}|u(x, t)-U(x, t)|\to 0 \quad \text{as $t\to \infty$}
\end{equation}
uniformly for $x\in \R^n$ with $|x|\leq Ct^{1\over d}$ for any $C>0$, where $d=n(m-1)+2>0$ %\textcolor{red}{\sout{and $\lambda\in(0,1)$}} 
and
\[
U(x,t):=\left(\frac{2d}{1-m}\right)^{\frac{1}{1-m}}\left(\frac{t}{|x|^2+\tilde{C} t^{2\over d}}\right)^{\frac{1}{1-m}}
\]
with $\tilde{C}=c(m,n)\|\phi\|_{L^1(\R^n)}^{-\frac{2(1-m)}{d}}$ for some $c(m, n)>0$. 
It follows immediately that 
\begin{equation}\label{eq:4.3}
t^{\frac{n}{d}}u(0,t)\to C_{m,n}\|\phi\|_{L^1(\R^n)}^{\frac{2}{d}}\quad \text{as $t\to \infty$}.
\end{equation}
 This type of asymptotic behavior is well known in \cite{FKa}*{Theorem~1.1} for the case $m>1$.  
 For the fast diffusion with $0<m<1$, see such a result in \cite{VaBook}*{Theorem~18.34}.  
 In the case of superfast diffusion in one dimension, i.e., $-1<m\le 0$ and $n=1$, 
 we can obtain \eqref{eq:4.2} and \eqref{eq:4.3} without assuming \eqref{eq:4.1}; see \cite{ERV}*{Theorem~5}.

\subsection{The case of continuous integrable functions}\label{subsection:4.1}
We begin with the case when the functions are assumed to be continuous. 
\begin{prop}
\label{thm:4.1}
Let $n\geq 1$, $0<m\leq 1$ with $n(m-1)+2>0$ and $\lambda\in(0,1)$. Let $\alpha=(m-1)/2$.
Assume that $\phi_0$, $\phi_1$, $\phi_\lambda$ are nonnegative functions in $L^1(\R^n)\cap C(\R^n)$ such that \eqref{eq:1.13} 
holds for all $y, z\in \R^n$. Then 
\begin{equation}
\label{eq:4.4}
\|\phi_\lambda\|_{L^1({\mathbb R}^n)}\ge M_{\frac{\alpha}{n\alpha+1}}(\|\phi_0\|_{L^1({\mathbb R}^n)},\|\phi_1\|_{L^1({\mathbb R}^n)};\lambda).
\end{equation} 
\end{prop}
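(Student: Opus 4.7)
The plan is to apply Theorem~\ref{thm:1.3} at a single point and then pass to the large-time limit using the sharp asymptotics \eqref{eq:4.3}. For $i\in\{0,1,\lambda\}$ let $u_i$ denote the solution of the Cauchy problem \eqref{eq:1.8}--\eqref{eq:1.9} with $\phi=\phi_i$ and $m=2\alpha+1$. To invoke both Theorem~\ref{thm:1.3} and the asymptotic formula, I would also assume at this stage that each $\phi_i$ is bounded and has the decay \eqref{eq:4.1}; these are exactly the ``other regularity assumptions'' mentioned in the introduction and they will be removed by the approximation of Step~3 (Section~\ref{subsection:4.2}).

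First, Theorem~\ref{thm:1.3} together with the hypothesis \eqref{eq:1.13} yields, in particular at the choice $y=z=0$, the pointwise inequality
\begin{equation*}
u_\lambda(0,t)\ \ge\ M_\alpha\bigl(u_0(0,t),\,u_1(0,t);\lambda\bigr)\qquad\text{for every }t>0.
\end{equation*}
Setting $d:=n(m-1)+2=2(n\alpha+1)>0$ and using the positive $1$-homogeneity of $M_\alpha$, I multiply this inequality by $t^{n/d}$ to obtain
\begin{equation*}
t^{n/d}u_\lambda(0,t)\ \ge\ M_\alpha\bigl(t^{n/d}u_0(0,t),\,t^{n/d}u_1(0,t);\lambda\bigr).
\end{equation*}

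Next I pass to the limit $t\to\infty$. By \eqref{eq:4.3} applied to each $u_i$, the left-hand side converges to $C_{m,n}\|\phi_\lambda\|_{L^1(\R^n)}^{2/d}$ and the argument inside the right-hand side converges in the corresponding components; the continuity of $M_\alpha$ on $(0,\infty)^2$ then yields
\begin{equation*}
C_{m,n}\|\phi_\lambda\|_{L^1(\R^n)}^{2/d}\ \ge\ M_\alpha\Bigl(C_{m,n}\|\phi_0\|_{L^1(\R^n)}^{2/d},\,C_{m,n}\|\phi_1\|_{L^1(\R^n)}^{2/d};\lambda\Bigr).
\end{equation*}
Dividing by $C_{m,n}$ (again by homogeneity) and raising to the power $d/2$, I invoke the standard identity $M_\alpha(a^p,b^p;\lambda)^{1/p}=M_{\alpha p}(a,b;\lambda)$ with $p=2/d=1/(n\alpha+1)$, whose product $\alpha p$ equals $\alpha/(n\alpha+1)$. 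This is exactly \eqref{eq:4.4}.

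The main obstacle is not any of these manipulations but rather the legitimacy of the two ingredients invoked. Theorem~\ref{thm:1.3} requires bounded continuous initial data, and the sharp asymptotic \eqref{eq:4.3} (for $0<m<1$) is usually quoted under the decay \eqref{eq:4.1}. Neither is guaranteed by $\phi_i\in L^1(\R^n)\cap C(\R^n)$ alone, so I expect the delicate point to be the additional regularity needed to make the $t\to\infty$ limit rigorous; I would handle it by working under those extra assumptions here and removing them afterwards via the approximation of Section~\ref{subsection:4.2}. In the one-dimensional superfast regime $-1<m\le 0$, the same scheme applies with Theorem~\ref{thm:3.10} replacing Theorem~\ref{thm:1.3} and with the asymptotic provided by \cite{ERV}*{Theorem~5}, which as noted in the text does not require \eqref{eq:4.1}.
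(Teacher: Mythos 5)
Your core computation is exactly the paper's: apply Theorem~\ref{thm:1.3}, evaluate at $y=z=0$, multiply by $t^{n/d}$, use the homogeneity of $M_\alpha$, pass to the limit via \eqref{eq:4.3} (or, for $\alpha=0$, via the explicit heat kernel and dominated convergence), and convert $M_\alpha$ of the $2/d$ powers into $M_{\alpha/(n\alpha+1)}$ of the masses. That part is correct.

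However, there is a genuine gap in how you dispose of the boundedness and decay hypotheses. Proposition~\ref{thm:4.1} is stated for arbitrary nonnegative $\phi_i\in L^1(\R^n)\cap C(\R^n)$, and your plan to ``assume boundedness and \eqref{eq:4.1} here and remove them later via Section~\ref{subsection:4.2}'' does not prove the proposition as stated: the approximation in Section~\ref{subsection:4.2} is designed to remove \emph{continuity} (and positivity) from $L^1$ data and it \emph{invokes} Proposition~\ref{thm:4.1} for the mollified functions, so you would be proving a strictly weaker statement and leaving the stated one unproved. The missing ingredient is an internal truncation, which is short and self-contained: set $\Phi_{i,\ell}=\min\{\phi_i,\ell\}$ when $\alpha=0$ and $\Phi_{i,\ell}=\min\{\phi_i,\ell(1+|x|)^{1/\alpha}\}$ when $-1/2<\alpha<0$. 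These truncations are bounded, continuous, satisfy the decay \eqref{eq:4.1}, and still satisfy \eqref{eq:1.13} (for $\alpha<0$ one uses the $\alpha$-concavity of $x\mapsto\ell(1+|x|)^{1/\alpha}$ together with the monotonicity of $M_\alpha$ in each argument). Running your argument for the $\Phi_{i,\ell}$ gives \eqref{eq:4.4} with $\|\Phi_{i,\ell}\|_{L^1}$ in place of $\|\phi_i\|_{L^1}$, and letting $\ell\to\infty$ with monotone convergence yields the stated inequality. With this truncation inserted, your proof coincides with the paper's.
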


\begin{proof}
We have $n\alpha+1>0$ by the conditions $n(m-1)+2>0$ and $\alpha=(m-1)/2$. In addition, the range $0<m\leq 1$ implies $-1/2<\alpha\leq 0$. Below, we consider two different cases.  %The case $n\alpha+1=0$ can be handled by approximation. 

\noindent 1) When $\alpha=0$ (equivalently $m=1$), for $i\in\{0, 1, \lambda\}$ and any $\ell>0$, we consider the solution $U_{i,\ell}$ of \eqref{eq:1.8}--\eqref{eq:1.9} with $\phi=\Phi_{i,\ell}$, where
\begin{equation}\label{eq:4.5}
\Phi_{i,\ell}(x):=\min\{\phi_i(x),\ell\}, \quad x\in{\mathbb R}^n.
\end{equation}
Since $\phi_0$, $\phi_1$, $\phi_\lambda\in L^1(\R^n)\cap C(\R^n)$ satisfy \eqref{eq:1.13} for all $y, z\in \R^n$, it is not difficult to see that $\Phi_{0, \ell}, \Phi_{1, \ell}, \Phi_{\lambda, \ell}\in L^1(\R^n)\cap C(\R^n)$ are bounded in $\R^n$ and also satisfy \eqref{eq:1.13} for all $y, z\in \R^n$. Then, by Theorem~\ref{thm:1.3} we have 
\[
U_{\lambda,\ell}((1-\lambda)y+\lambda z,t)\ge M_{0}(U_{1,\ell}(y,t), U_{2,\ell}(z,t);\lambda)
\]
for all $y, z\in{\mathbb R}^n$, and $t>0$. In particular, letting $y=z=0$ we have 
\begin{equation}
\label{eq:4.6}
U_{\lambda,\ell}(0,t)\ge M_{0}(U_{1,\ell}(0,t), U_{2,\ell}(0,t);\lambda).
\end{equation} 
Since for any $x\in \R^n$
$$
U_{i,\ell}(x,t)=(4\pi t)^{-\frac{n}{2}}\int_{{\mathbb R}^n}\exp\left(-\frac{|x-y|^2}{4t}\right)\Phi_{i,\ell}(y)\,dy,
$$
Lebesgue's dominated convergence theorem implies that,  as $t\to \infty$, 
$$
t^{\frac{n}{2}}U_{i,\ell}(0,t)\to (4\pi)^{-\frac{n}{2}}\|\Phi_{i,\ell}\|_{L^1({\mathbb R}^n)}.
$$
%uniformly for $x$ in any compact set of ${\mathbb R}^n$. 
This together with \eqref{eq:4.6} yields 
$$
\|\Phi_{\lambda,\ell}\|_{L^1({\mathbb R}^n)}\ge M_0(\|\Phi_{1,\ell}\|_{L^1({\mathbb R}^n)},\|\Phi_{2,\ell}\|_{L^1({\mathbb R}^n)};\lambda). 
$$
Letting $\ell\to\infty$, we obtain inequality~\eqref{eq:4.4} in the case of $\alpha=0$. 

\noindent 2) If $-1/2< \alpha<0$ (equivalently $0<m<1$), for $i\in \{0, 1, \lambda\}$, we set
%take our approximation $\Phi_{i, \ell}$ of $\phi_i$ to be
\[
{\Phi}_{i,\ell}(x):=\min\left\{\phi_i(x),\ell(1+|x|)^{1\over \alpha}\right\}.
\]
%Our choice is different from the case with $\alpha=0$, but we still use the same notation ${\Phi}_{i,\ell}$ for convenience. %\quad
%\tilde{f}_{1,\ell}(x):=\min\left\{\phi_i(x),\ell(1+|x|)^{\frac{2}{m-1}}\right\}.
Let $U_{i,\ell}$ still denote the solution to problem \eqref{eq:1.8}--\eqref{eq:1.9} with $\phi={\Phi}_{i,\ell}$. 
Then, by Theorem~\ref{thm:1.3} we have 
\[
{U}_{\lambda,\ell}((1-\lambda)y+\lambda z, t)\ge M_{\alpha}({U}_{1,\ell}(y, t),{U}_{2,\ell}(z, t);\lambda)
\]
for all $x$, $y\in{\mathbb R}^n$, and $t>0$, which in particular yields, 
\begin{equation}\label{eq:4.7}
U_{\lambda,\ell}(0, t)\ge M_{\alpha}(U_{1,\ell}(0, t), U_{2,\ell}(0, t);\lambda)\quad \text{for all $t>0$.}
\end{equation}
On the other hand, 
since 
$$
\Phi_{i,\ell}(x)=O(|x|^{1\over \alpha})\quad\mbox{as}\quad |x|\to\infty,
$$
by using \eqref{eq:4.3}, we get
$$
t^{\frac{n}{d}}U_{i,\ell}(0,t)\to C_{m,n}\|\Phi_{i,\ell}\|_{L^1(\R^n)}^{\frac{2}{d}}\quad \text{as $t\to \infty$}.
$$
This together with \eqref{eq:4.7} implies that
$$
\|\Phi_{\lambda,\ell}\|_{L^1(\R^n)}^{\frac{2}{d}}
\ge M_{\alpha}\left(\|\Phi_{1,\ell}\|_{L^1(\R^n)}^{\frac{2}{d}},|\Phi_{2,\ell}\|_{L^1(\R^n)}^{\frac{2}{d}};\lambda\right).
$$
Noticing that
$$
\frac{2\alpha}{d}%=\frac{2}{d}\cdot \frac{m-1}{2}
=\frac{2\alpha}{n(m-1)+2}
=\frac{\alpha}{n\alpha+1},
$$
 we have
$$
\|\phi_\lambda\|_{L^1(\R^n)}\ge \|\Phi_{\lambda,\ell}\|_{L^1(\R^n)}
\ge M_{\frac{\alpha}{n\alpha+1}}\left(\|\Phi_{1,\ell}\|_{L^1(\R^n)},\|\Phi_{2,\ell}\|_{L^1(\R^n)};\lambda\right)
$$
for all $\ell>0$. Therefore, letting $\ell\to\infty$, we obtain inequality~\eqref{eq:4.4} and conclude the proof for the case $n\alpha+1>0$.  
Thus Proposition~\ref{thm:4.1} follows.
\end{proof}

%\begin{rem}[Pr\'ekopa--Leindler inequality]%label{rmk heat}%
   % Since the solution formula %eqref{heat formula}% of the heat equation holds even without the continuity assumption on the initial value, our proof above in the case of $\alpha=0$ and $m=1$ actually holds for $\phi_i\in L^1(\R^n)$ for $i\in \{0, 1, \lambda\}$. In other words, our proof of the Pr\'ekopa--Leindler inequality is complete. 
%\end{rem}

\begin{prop}
\label{thm:4.2}
Let $n=1$, $-1<m< 0$, $\alpha=(m-1)/2$ and $0<\lambda<1$. 
For $i\in \{0, 1, \lambda\}$, let $\phi_i\in L^1(\R)\cap L^\infty(\R) \cap C^\infty(\R)$ with $\phi_i>0$ and $\phi_i^m$ Lipschitz in $\R$. 
Assume that $\phi_i$ satisfy \eqref{eq:1.13} for all $y, z\in \R$ and $i\in\{0, 1,\lambda\}$. Then \eqref{eq:4.4} holds with $n=1$. 
\end{prop}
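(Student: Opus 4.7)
The plan is to mirror the proof of Proposition~\ref{thm:4.1}, replacing Theorem~\ref{thm:1.3} by its one-dimensional superfast counterpart, Theorem~\ref{thm:3.10}, and \eqref{eq:4.3} by the asymptotic result of \cite{ERV}*{Theorem~5} which, crucially in this regime, requires no decay assumption on the initial datum.

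First, observe that under $n=1$ and $-1<m<0$ we have $d=n(m-1)+2=m+1>0$ and $n\alpha+1=\alpha+1=(m+1)/2>0$, together with the key identity $2\alpha/d=\alpha/(n\alpha+1)$. Given the regularity hypotheses on $\phi_i$, Theorem~\ref{thm:3.6} produces, for each $i\in\{0,1,\lambda\}$, a positive bounded classical solution $u_i\in C^\infty(Q)\cap C(\overline{Q})$ of the Cauchy problem \eqref{eq:1.8}--\eqref{eq:1.9} with $\phi=\phi_i$. These are exactly the hypotheses of Theorem~\ref{thm:3.10}, which we apply to conclude
\[
u_\lambda((1-\lambda)y+\lambda z,t)\ge M_\alpha(u_0(y,t),u_1(z,t);\lambda)\qquad\text{for all }y,z\in\R,\ t>0.
\]
Specializing to $y=z=0$ gives
\[
u_\lambda(0,t)\ge M_\alpha(u_0(0,t),u_1(0,t);\lambda),
\]
and multiplying both sides by $t^{1/d}=t^{n/d}$ and exploiting the positive $1$-homogeneity of $M_\alpha$ in its first two entries yields
\[
t^{1/d}u_\lambda(0,t)\ge M_\alpha\bigl(t^{1/d}u_0(0,t),t^{1/d}u_1(0,t);\lambda\bigr).
\]

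Next, because $n=1$ and $-1<m<0$, the asymptotic result \cite{ERV}*{Theorem~5} applies to each $u_i$ \emph{without} requiring the pointwise decay \eqref{eq:4.1}; hence
\[
\lim_{t\to\infty} t^{1/d}u_i(0,t)=C_{m,1}\,\|\phi_i\|_{L^1(\R)}^{2/d}\qquad (i\in\{0,1,\lambda\}).
\]
Passing to the limit in the inequality above and invoking continuity of $M_\alpha$ together with its $1$-homogeneity once more, we obtain
\[
C_{m,1}\|\phi_\lambda\|_{L^1(\R)}^{2/d}\ge C_{m,1}\, M_\alpha\bigl(\|\phi_0\|_{L^1(\R)}^{2/d},\|\phi_1\|_{L^1(\R)}^{2/d};\lambda\bigr),
\]
so that, after cancellation of $C_{m,1}$ and applying the identity $2\alpha/d=\alpha/(n\alpha+1)$ to rewrite the exponent, one finds precisely
\[
\|\phi_\lambda\|_{L^1(\R)}\ge M_{\frac{\alpha}{n\alpha+1}}\bigl(\|\phi_0\|_{L^1(\R)},\|\phi_1\|_{L^1(\R)};\lambda\bigr),
\]
which is \eqref{eq:4.4} with $n=1$.

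There is no genuine obstacle here: all the hard work sits in Theorem~\ref{thm:3.10}, where both the Minkowski-convolution subsolution property (Theorem~\ref{thm:3.8}) and the delicate comparison principle (Proposition~\ref{thm:3.9}) for the superfast regime were established, and in the availability of the asymptotic rate \cite{ERV}*{Theorem~5} free of decay hypotheses. The only minor technical point to keep in mind is that the regularity demanded in Theorem~\ref{thm:3.10} ($\phi_i\in L^1\cap L^\infty\cap C^\infty$, positive, with $\phi_i^m$ Lipschitz) is \emph{exactly} what is assumed here, so no approximation step is needed within this proposition; the removal of these extra assumptions is deferred to Section~\ref{subsection:4.2}.
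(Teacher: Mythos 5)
Your proposal is correct and follows essentially the same route as the paper's own proof: invoke Theorem~\ref{thm:3.10} to get the pointwise power-mean inequality for the solutions, specialize to $y=z=0$, and pass to the limit using the large-time asymptotics \eqref{eq:4.3}, which in the one-dimensional superfast regime holds without the decay hypothesis \eqref{eq:4.1}. The extra details you supply (homogeneity of $M_\alpha$ and the identity $2\alpha/d=\alpha/(n\alpha+1)$) are exactly the computations the paper performs in Proposition~\ref{thm:4.1} and implicitly reuses here.
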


\begin{proof}
Since the large time behavior \eqref{eq:4.3} for the solution of \eqref{eq:1.8}--\eqref{eq:1.9} holds without needing \eqref{eq:4.1}, we can apply the same argument as in the proof of Proposition \ref{thm:4.2}. Indeed, let $u_i$ be the solution of \eqref{eq:1.8}--\eqref{eq:1.9} (in the sense of Theorem~\ref{thm:3.6}) with $\phi=\phi_i$, by Theorem~\ref{thm:3.10}, we have 
\[
u_{\lambda}((1-\lambda)y+\lambda z, t)
\ge  M_\alpha(u_{0}(y, t), u_{1}(z, t); \lambda)\quad \text{for all $y, z\in \R$, $t>0$}
\]
and therefore 
\[
u_{\lambda}(0, t)
\ge  M_\alpha(u_{0}(0, t), u_{1}(0, t); \lambda) \quad\text{for all $t>0$.}
\]
Adopting \eqref{eq:4.3}, we get \eqref{eq:4.4} immediately.
\end{proof}

The borderline case $\alpha=-1/2$ in one dimension, corresponding to the choice $m=0$,  can be proved by approximation. It can also be obtained from the cases $\alpha<-1/2$ by the homogeneity argument in Appendix~\ref{section:A}. The result for the case of $\alpha=-1$ and $m=-1$ also follows by approximation, as clarified later.

%\textcolor{red}{I could not understand the reason why $\alpha=-1/2$ is delicate.}

\subsection{The case of $L^1$ functions}\label{subsection:4.2}

We have shown Theorem~\ref{thm:1.1} when the functions satisfy certain additional regularity assumptions. 
This section is devoted to the proof of BBL without smoothness beyond $L^1$ regularity by applying further regularization. 

We use the standard mollification to regularize $\phi_i$. Recall that in $\R^n$, the mollifier with $\vep>0$ is defined by 
\begin{equation}\label{eq:4.8}
\rho_\varepsilon(x):=\varepsilon^{-n}\rho(\varepsilon^{-1}x)\quad\mbox{for}\quad x\in{\mathbb R}^n,
\end{equation}
where $\rho$ is a $C_0({\mathbb R}^n)$-function given by
\[
\rho(x)=\begin{cases}
\displaystyle{c_n\exp\left(-\frac{1}{1-|x|^2}\right)} &\text{for $x\in B(0,1)$,}\\
0 &\text{for $x\in{\mathbb R}^n\setminus B(0,1)$}.
\end{cases}
\]
Here $c_n$ is a positive constant to be chosen such that $\|\rho\|_{L^1({\mathbb R}^n)}=1$.

\begin{proof}[Proof of Theorem~{\rm\ref{thm:1.1}}]
The following proof applies to the choice 
$f=\phi_0$, $g=\phi_1$ and $h=\phi_\lambda$
%\textcolor{blue}{\sout{$\phi_0=f, \phi_1=g$ and $\phi_\lambda=h$}} 
in Theorem~\ref{thm:1.1}. 
%As pointed out in Remark %ref{rmk heat}%, we have already proved the Pr\'ekopa--Leindler inequality, i.e. the case with $\alpha=0$. 
Let us split our discussion into several different cases. %with the case . The other cases %$n=1$ and $-1<\alpha<-1/2$ 
%will can be treated in a similar manner; we will add comments about them later.

\medskip 

\noindent {\bf Case 1:} 
Consider the case of $n\geq 1$, $-1/2<\alpha< 0$ and $n\alpha+1>0$.
Let $\alpha=(m-1)/2$ as before. For any $c\geq 1$ and $f\in L^1(\R^n)$, we first define $J_{\delta}^c[f]$  by 
\begin{equation}\label{eq:4.9}
J_{\delta}^c[f](x)=\min\left\{f(x)+c\delta, \delta^{\beta}(1+|x|)^{1\over \alpha}\right\}, \quad x\in \R^n, 
\end{equation}
where we choose $\beta<0$ close to $0$ so that 
\begin{equation}\label{eq:4.10}
1+n\alpha(1-\beta)>0.
\end{equation} 
It is easily seen that $J_{\delta}^c[f]\in L^\infty(\R^n)$ with $J_{\delta}^c[f]>0$ in $\R^n$. Also, due to the condition $n\alpha+1>0$, we see that $J_{\delta}^c[f]\in L^1(\R^n)$. 

Moreover, when $\delta>0$ is small, we have 
$J_{\delta}^c[f](x)=\delta^\beta(1+|x|)^{1\over \alpha}$ for all $x\in \R$ satisfying $|x|\geq R_\delta$, where 
\begin{equation}\label{eq:4.11}
R_\delta:=\delta^{\alpha(1-\beta)}-1>0.
\end{equation}
Then, we get
\[
\|J_{\delta}^c[f]-f\|_{L^1(\R)}\leq \int_{|x|\leq R_\delta} |J_{\delta}^c[f](x)-f(x)|\, dx+ \int_{|x|\geq R_\delta} |J_{\delta}^c[f](x)-f(x)|\, dx.
\]
%\textcolor{red}{(Do we need the last integral?)}
%
Let us fix arbitrarily $x\in \R^n$ satisfying $|x|\leq R_\delta$ and $f(x)<\delta^\beta(1+|x|)^{1\over \alpha}$. 
%If 
%\[
%J_\delta^c[f](x)=\delta^\beta(1+|x|)^{1\over \alpha}\leq f(x)+c\delta, 
%\]
We then immediately get 
\begin{equation}\label{eq:4.12}
0\leq  J_\delta^c[f](x)-f(x)\leq (f(x)+c\delta)-f(x)= c\delta
\end{equation}
%It is clear that \eqref{eq:4.12} still holds if $J^c_\delta[f](x)=f(x)+c\delta$. \textcolor{red}{(Do we need this comment?)} 
and therefore $|J^c_\delta[f](x)-f(x)|\leq c\delta$ for such $x$. It follows that 
\begin{equation}\label{eq:4.13}
\int_{|x|\leq R_\delta} |J^c_{\delta}[f](x)-f(x)|\, dx \leq C_1                
           \delta R_\delta^n +\int_{|x|\leq R_\delta} \left(f(x)-\delta^\beta(1+|x|)^{1\over \alpha}\right)_+\,dx
\end{equation}
for some $C_1>0$. 
We have $\delta R_\delta^n\to 0$ as $\delta\to 0$, thanks to \eqref{eq:4.10} and  \eqref{eq:4.11}. 
The second term on the right hand side above also converges to $0$ by Lebesgue's dominated convergence theorem. 
Indeed, for any fixed $R>0$, we have as $\delta\to 0$
\[
F_\delta(x):=\left((f(x)-\delta^\beta(1+|x|)^{1\over \alpha}\right)_+\to 0
\]
pointwise for all $x\leq R$, which by Lebesgue's dominated convergence theorem yields  %\textcolor{blue}{(I changes it into the display style.)}
\[
\int_{|x|\leq R} F_\delta(x)\, dx\to 0.
\]
Since
\[
\begin{aligned}
\int_{|x|\leq R_\delta} F_\delta(x)\,dx
&=\int_{|x|\leq R} F_\delta(x)\, dx+ \int_{R\leq |x|\leq R_\delta} F_\delta(x)\,dx\\
&\leq \int_{|x|\leq R} F_\delta(x)\, dx+\int_{|x|\geq R} f(x)\, dx,
\end{aligned}
\]
It follows that 
\[
\limsup_{\delta\to 0}\int_{|x|\leq R_\delta} F_\delta(x)\,dx\leq \int_{|x|\geq R} f(x)\, dx,
\]
By sending $R\to \infty$, we immediately obtain %\textcolor{blue}{(I changes it into the display style.)}
\[
\int_{|x|\leq R_\delta} F_\delta(x)\,dx\to 0\quad\mbox{as $\delta\to 0$}.
\]  
As a result, from \eqref{eq:4.13} we are led to 
\begin{equation}\label{eq:4.14}
\int_{|x|\leq R_\delta} \left|J_{\delta}^c[f](x)-f(x)\right|\, dx\to 0\quad \text{as $\delta\to 0$}. 
\end{equation}
 On the other hand, it is not difficult to see that 
\begin{equation}\label{eq:4.15}
\begin{aligned}
\int_{|x|\geq R_\delta} |J_{\delta}^c[f](x)-f(x)|\, dx &\leq \int_{|x|\geq R_\delta} J_{\delta}^c[f](x)\, dx+ \int_{|x|\geq R_\delta} f(x)\, dx\\
&\leq C_2\delta^{1+n\alpha(1-\beta)} +\int_{|x|\geq R_\delta} f\, dx
\end{aligned}
\end{equation}
for some $C_2>0$, which yields 
\begin{equation}\label{eq:4.16}
\int_{|x|\geq R_\delta}\left|J_{\delta}^c[f](x)-f(x)\right|\, dx\to 0\quad \text{as $\delta\to 0$. }
\end{equation}
Combining \eqref{eq:4.14} and \eqref{eq:4.16}, we obtain 
\begin{equation}\label{eq:4.17}
\left\|J_{\delta}^c[f](x)-f(x)\right\|_{L^1(\R^n)}\to 0\quad \text{as $\delta\to 0$.}
\end{equation}

Noticing that  \eqref{eq:1.13} holds for almost every $y, z\in \R$, by \eqref{eq:3.11} we can take $a\geq 1$ such that, for any $\delta>0$, it holds
\begin{equation}\label{eq:4.18}
M_\alpha({\phi}_{0}(y)+\delta, {\phi}_{1}(z)+\delta ; \lambda)\leq M_\alpha(\phi_0(y), \phi_1(z); \lambda)
+a\delta\leq \phi_\lambda((1-\lambda)y+\lambda z)+a\delta. 
\end{equation}
We fix such $a\geq 1$. In view of the $\alpha$-concavity of the function $(1+|x|)^{1\over \alpha}$, 
the relation \eqref{eq:4.18} together with \eqref{eq:4.9} yields 
\begin{equation}\label{eq:4.19}
M_\alpha\left(J_\delta^1[\phi_0](y), J_\delta^1[\phi_1](z); \lambda\right)\leq J_\delta^a[\phi_\lambda]((1-\lambda)y+\lambda z)
\end{equation}
for almost all $y, z\in \R^n$. 

We next adopt the mollifier \eqref{eq:4.8} for further regularization.   For any $\varepsilon>0$, set 
\begin{equation}\label{eq:4.20}
\tilde{\phi}_{i}=\left(J_\delta^1 [\phi_i]^\alpha \ast \rho_\vep\right)^{1\over \alpha} 
\end{equation} 
for $i= 1, 2$ and 
\begin{equation}\label{eq:4.21}
\tilde{\phi}_{\lambda}=\left(J_\delta^a [\phi_\lambda]^\alpha \ast \rho_\vep\right)^{1\over \alpha}.
\end{equation} 
%\textcolor{red}{\sout{Here $J_\delta^1$ and $J_\delta^a$ respectively denote the operator $J_\delta^c$ above with $c=1$ and $c=a$.}} 
%
Note that $\tilde{\phi}_{i}$ constructed in \eqref{eq:4.20} and \eqref{eq:4.21} are positive, continuous in ${\mathbb R}^n$. % and satisfy \eqref{eq:1.13} for all $y$, $z\in{\mathbb R}^n$.  
Moreover, letting  $\varepsilon\to 0$ and then $\delta\to 0$,  
we obtain $\tilde{\phi_i}\to \phi_i$ in $L^1(\R^n)$ for all $i\in \{0, 1, \lambda\}$. 

 Using the mollification, from \eqref{eq:4.18} we obtain 
\begin{equation}\label{eq:4.22}
M_\alpha\left(\tilde{\phi}_{1}(y), \tilde{\phi}_{2}(z); \lambda\right)\leq \tilde{\phi}_{\lambda}((1-\lambda) y+\lambda z)
\end{equation}
for all $y, z\in \R^n$. In view of Proposition \ref{thm:4.1} we get 
\begin{equation}\label{eq:4.23}
\|\tilde{\phi}_\lambda\|_{L^1({\mathbb R}^n)}\ge M_{\frac{\alpha}{n\alpha+1}}(\|\tilde{\phi}_1\|_{L^1({\mathbb R}^n)},\|\tilde{\phi}_2\|_{L^1({\mathbb R}^n)};\lambda).
\end{equation}
Passing to the limit as $\varepsilon\to 0$, $\delta\to 0$ we end up with the desired inequality for $\phi_i$. 

\medskip 

\noindent {\bf Case 2:} 
Consider the case $n=1$ and $-1<\alpha<-1/2$. 
Our approximation is actually the same as in the previous case. Let us define $J_\delta^c$ for $c\geq 1$ in the same manner as in \eqref{eq:4.9} with $n=1$ and $\beta\in ((\alpha+1)/\alpha, 0)$ such that \eqref{eq:4.10} still holds. Take $\tilde{\phi}_i$ as in \eqref{eq:4.20} and \eqref{eq:4.21}. Hence, we still get \eqref{eq:4.22} for all $y, z\in \R$ and $\tilde{\phi}_i\to \phi_i$ in $L^1(\R)$ as $\vep, \delta\to 0$ for $i\in \{0, 1, \lambda\}$. 

The only difference for the current case is that, in order to use Proposition \ref{thm:4.2}, we need our approximation $\tilde{\phi}_i$ to be in $L^1(\R)\cap L^\infty(\R) \cap C^\infty(\R)$ with $\tilde{\phi}_i>0$ and $(\tilde{\phi}_i)^m$ Lipschitz in $\R$. These properties of $\tilde{\phi}_i$ do hold. Indeed, by our construction it is clear that $\tilde{\phi}_i>0$ in $\R$ and $\tilde{\phi}_i\in L^\infty(\R)\cap C^\infty(\R)$. Moreover, since $x\mapsto J_{\delta}^c[f](x)^\alpha$ is linear in $(-\infty, R_\delta]$ and $[R_\delta, \infty)$ for any $f\in L^1(\R)$ and $c\geq 1$, where $R_\delta>0$ is given by \eqref{eq:4.11}, we have 
\[
\tilde{\phi}_{i}(x)=J_{\delta}^c[\phi_i](x) =\delta^\beta(1+|x|)^{1\over \alpha }\quad \text{for all $|x|>R_\delta+1$}
\]
when $\vep>0$ is small. This implies that $\tilde{\phi}_i\in L^1(\R)$ and $(\tilde{\phi}_i)^m$ is Lipschitz continuous in $\R$ for $i\in \{0, 1, \lambda\}$. 
%thanks to the relation . \textcolor{blue}{(Do we need ``thanks to the relation $\alpha<m<0$"?)}

Hence, applying Proposition \ref{thm:4.2} to $\tilde{\phi}_i$, we are led to \eqref{eq:4.23} again. 
Sending $\vep\to 0$ and then $\delta\to 0$, we finish the proof of BBL in the case of $n=1$ and $-1<\alpha<-1/2$. This also implies the case $\alpha=-1/2$ by the argument in Appendix \ref{section:A}.

\medskip

\noindent {\bf Case 3:} Consider the case of $n\geq 1$, $\alpha=0$.  This case corresponds to PL. % \textcolor{blue}{\sout{the Pr\'ekopa--Leindler inequality}}. 
It seems that $J_\delta^c$ does not meet our needs in this case. We introduce a different operator $K_{\delta, \gamma}^c$ with $\delta>0$, $\gamma\in (0, 1]$ and $c\geq 1$.  For any $f\in L^1(\R^n)$, let
\[
K_{\delta, \gamma}^c[f](x)=\min\left\{\max\{f(x), c\delta^\gamma\},\ \delta^{\beta}e^{-|x|}\right\}, \quad x\in \R^n,
\]
where we fix $-\gamma<\beta<0$. It is clear that $K_{\delta, \gamma}^c[f]\in L^1(\R^n)$.

Although $K_{\delta, \gamma}^c$ appears slightly different from $J_\delta^c$, we can still prove $K_{\delta, \gamma}^c[f]\to f$ in $L^1$ as $\delta\to 0$ by similar arguments for \eqref{eq:4.17} in Case~1. In fact, 
choosing
$R_\delta:=(\beta-\gamma)\log \delta$ enables us to obtain estimates analogous to \eqref{eq:4.13} and \eqref{eq:4.15}: for $\delta>0$ sufficiently small, we have
\[
\int_{|x|\leq R_\delta} |K^c_{\delta, \gamma}[f](x)-f(x)|\, dx \leq C_1                
           \delta^\gamma R_\delta^n +\int_{|x|\leq R_\delta} \left(f(x)-\delta^\beta e^{-|x|}\right)_+\,dx,
\]
\[
\begin{aligned}
\int_{|x|\geq R_\delta} & |K^c_{\delta, \gamma}[f](x)-f(x)|\, dx \leq \int_{|x|\geq R_\delta} \delta^\beta e^{-|x|}\, dx+ \int_{|x|\geq R_\delta} f(x)\, dx\\
&\leq \delta^\beta C_2 e^{-{R_\delta\over 2}}+\int_{|x|\geq R_\delta} f\, dx =C_2 \sqrt{c}\delta^{\beta+\gamma\over 2}+\int_{|x|\geq R_\delta} f\, dx,
\end{aligned}
\]
for some constants $C_1, C_2>0$. Since our choice of $\beta$ yields $\beta+\gamma>0$, the $L^1$-convergence of $K_{\delta, \gamma}^c[f]$ to $f$ as $\delta\to 0$ then follows immediately. 

Fix $\ell>0$ arbitrarily large and let $\Phi_{i, \ell}$ be given by \eqref{eq:4.5}. Since \eqref{eq:1.13} holds for $\phi_i$ with $i\in \{0, 1, \lambda\}$, we deduce that
\[
\Phi_{0, \ell}(y)^{1-\lambda}\Phi_{1, \ell}(z)^\lambda\leq \Phi_{\lambda, \ell}((1-\lambda)y+\lambda z)
\]
for almost all $y, z\in \R^n$. It follows that 
\begin{equation}\label{eq:4.24}
\max\{\Phi_{0, \ell}(y), \delta\}^{1-\lambda}\max\{\Phi_{1, \ell}(z), \delta\}^\lambda\leq \max\{\Phi_{\lambda, \ell}((1-\lambda)y+\lambda z),\  c_\ell\delta^{\lambda_0}\}
\end{equation}
for almost all $y, z\in \R^n$ and $\delta>0$ small, where $\lambda_0=\min\{1-\lambda, \lambda\}$ and $c_\ell=\max\{\ell^\lambda, \ell^{1-\lambda}\}$.

We extend our mollification in \eqref{eq:4.20} and \eqref{eq:4.21} to the case $\alpha=0$. For $i=0, 1$, we adopt $K_{\delta, \gamma}^c$ with $\gamma=c=1$ and set
\[
\tilde{\Phi}_{i, \ell}= \exp \left(\log \left(K_{\delta, 1}^1[\Phi_{i, \ell}]\right) \ast \rho_\vep\right).
\]
On the other hand, for $\Phi_{\lambda, \ell}$ we use 
$K_{\delta, \gamma}^c$ with $\gamma=\lambda_0$, $b=c_\ell $
and take 
\[
\tilde{\Phi}_{\lambda, \ell}= \exp \left(\log \left(K_{\delta, \lambda_0}^{c_\ell}[\Phi_{\lambda, \ell}]\right) \ast \rho_\vep\right).
\]
By the log-concavity of the function $e^{-|x|}$, the relation \eqref{eq:4.24} implies that
\[
(1-\lambda)\log \left(K_{\delta, 1}^1[\Phi_{0, \ell}]\right)(y)+ \lambda \log \left(K_{\delta, 1}^1[\Phi_{1, \ell}]\right)(z)\leq \log \left(K_{\delta, \lambda_0}^{c_\ell}[\Phi_{\lambda, \ell}]\right) \left((1-\lambda)y+\lambda z\right)
\]
for almost all $y, z\in \R^n$. It then follows that 
\[
M_0\left(\tilde{\Phi}_{0, \ell}(y),  \tilde{\Phi}_{1, \ell}(z); \lambda\right)\leq \tilde{\Phi}_{\lambda, \ell}((1-\lambda)y+\lambda z)
\]
holds for all $y, z\in \R^n$. Since $\tilde{\Phi}_{i, \ell}\in L^1(\R^n)\cap C(\R^n)$, we can apply Proposition \ref{thm:4.1} to obtain
\[
\|\tilde{\Phi}_{\lambda, \ell}\|_{L^1(\R^n)}\geq M_0\left(\|\tilde{\Phi}_{0, \ell}\|_{L^1(\R^n)}, \|\tilde{\Phi}_{1, \ell}\|_{L^1(\R^n)}\right).
\]
Letting $\vep\to 0$ and then $\delta\to 0$, by the $L^1$-convergence established above we get
\[
\|\Phi_{\lambda, \ell}\|_{L^1(\R^n)}\geq M_0\left(\|\Phi_{0, \ell}\|_{L^1(\R^n)}, \|\Phi_{1, \ell}\|_{L^1(\R^n)}\right).
\]
Since $\Phi_{i, \ell}\to \phi_i$ in $L^1(\R^n)$ as $\ell\to \infty$, we pass to the limit to complete our proof of PL for $\phi_i$ with $i\in \{0, 1, \lambda\}$.

\medskip

\noindent {\bf Case 4:} Consider the case of $n\geq1$, $\alpha=-1/n$. 
This case is not included in the discussion above. However, we can get it through approximations by first having the inequality with exponent 
\[
\alpha=-{1\over n}+\vep \quad (\vep>0)
\]
for functions $\hat{\phi}_i$ ($i\in \{0, 1, \lambda\}$) given by
\[
\hat{\phi}_i(x)=\min\left\{\phi_i(x)^{1\over 1-n\vep},\  {1\over \vep}\right\}, \quad x\in \R^n
\]
and then sending $\vep\to 0+$. Therefore the proof of Theorem~\ref{thm:1.1} is complete. 
\end{proof}

\section{Equality condition for Pr\'ekopa--Leindler inequality}\label{section:5}

In this section we continue to adopt our PDE approach to further explore the equality condition. 
When the functions in question are assumed to be continuous and compactly supported, we are able to recover the equality condition for PL, thanks to special fine properties of the heat equation, including eventual log-concavity and backward uniqueness. 
The case of more general BBL  %\textcolor{blue}{\sout{Borell--Brascamp--Lieb inequality}} 
turns out to be a more challenging problem and related comments are given at the end of the section.  

\begin{thm}\label{thm:5.1}
Let $n\geq 1$ and $\lambda\in (0, 1)$. 
Suppose that $\phi_0, \phi_1, \phi_\lambda$ are nonnegative continuous functions in ${\mathbb R}^n$ with compact support.
%{\color{red}CAN WE ELIMINATE CONTINUITY?}
Assume that \eqref{eq:1.13} holds with $\alpha=0$, that is, 
\begin{equation}\label{eq:5.1}
\phi_\lambda((1-\lambda)y+\lambda z)\geq \phi_0(y)^{1-\lambda} \phi_1(z)^\lambda
\end{equation}
for all $y, z\in \R^n$. Assume also that
\begin{equation}\label{eq:5.2}
\int_{\R^n} \phi_\lambda(x)\, dx=\left(\int_{\R^n} \phi_0(x)\, dx\right)^{1-\lambda} \left(\int_{\R^n} \phi_1(x)\, dx\right)^\lambda. 
\end{equation}
Then, $\phi_0, \phi_1, \phi_\lambda$ are log-concave in $\R^n$ such that 
\begin{equation}\label{eq:5.3}
\phi_0(x)= k\phi_1 \left(x+\eta\right)=k^\lambda \phi_\lambda(x+\lambda \eta),
\quad x\in{\mathbb R}^n,
\end{equation}
for some $\eta\in \R^n$ and $k>0$.
\end{thm}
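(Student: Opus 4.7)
The plan follows the four-stage strategy of Subsection~1.4. First, set $u_i(x,t) := (e^{t\Delta}\phi_i)(x)$ for $i \in \{0,1,\lambda\}$; these are smooth positive functions on $\R^n \times (0,\infty)$ that integrate to $\int \phi_i$ for every $t>0$ by mass conservation. Theorem~\ref{thm:1.3} with $\alpha=0$ gives $\ol{u}_\lambda \leq u_\lambda$, where $\ol{u}_\lambda$ is the spatial log-Minkowski convolution \eqref{eq:1.12} of $u_0, u_1$. Since $\ol{u}_\lambda(\cdot,t), u_0(\cdot,t), u_1(\cdot,t)$ satisfy the PL hypothesis at every $t>0$ by construction of $\ol{u}_\lambda$, Theorem~\ref{thm:1.1} gives $\int \ol{u}_\lambda(\cdot,t)\,dx \geq \bigl(\int \phi_0\bigr)^{1-\lambda}\bigl(\int \phi_1\bigr)^\lambda$. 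By hypothesis \eqref{eq:5.2}, the right-hand side equals $\int \phi_\lambda = \int u_\lambda(\cdot,t)\,dx$, so $\ol{u}_\lambda \leq u_\lambda$ becomes a pointwise equality on $\R^n \times (0,\infty)$.

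Next, invoke eventual strong log-concavity of the heat flow \cite{LV1, IshSaTa1}: since the $\phi_i$ are continuous and compactly supported, there is $t_*>0$ for which each $\log u_i(\cdot, t_*)$ is smooth and strongly concave on $\R^n$. Writing $V_i := -\log u_i(\cdot, t_*)$, the identity $u_\lambda = \ol{u}_\lambda$ at time $t_*$ becomes the weighted inf-convolution identity
\begin{equation*}
V_\lambda(x) = \inf\bigl\{(1-\lambda)V_0(y) + \lambda V_1(z) : (1-\lambda)y + \lambda z = x\bigr\}, \quad x \in \R^n.
\end{equation*}
Taking Legendre conjugates of the smooth strongly convex $V_i$ converts this into the pointwise equality $V_\lambda^* = (1-\lambda) V_0^* + \lambda V_1^* + c_0$ for some constant. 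Combined with the integral equality $\int e^{-V_\lambda} = \bigl(\int e^{-V_0}\bigr)^{1-\lambda}\bigl(\int e^{-V_1}\bigr)^\lambda$ inherited from the first step at $t_*$, a rigidity analysis (via the change of variables $x = \nabla V_i^*(\xi)$ and the equality case of the resulting H\"older-type estimate) forces $\nabla^2 V_0^* \equiv \nabla^2 V_1^*$, so $V_1^* - V_0^*$ is affine. Equivalently, there exist $\eta \in \R^n$ and $k > 0$ with $u_0(x, t_*) = k\, u_1(x+\eta, t_*) = k^\lambda u_\lambda(x+\lambda\eta, t_*)$, which is the relation \eqref{eq:1.20}.

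Finally, $u_0(x,t)$ and $k\,u_1(x+\eta, t)$ are bounded smooth heat-equation solutions that agree at $t = t_*$, so backward uniqueness of the heat equation \cite{EKPV} forces them to coincide on $[0, t_*]$; evaluating at $t = 0$ yields $\phi_0 = k\,\phi_1(\cdot + \eta)$, and the same argument applied to $k^\lambda u_\lambda(\cdot + \lambda\eta, \cdot)$ yields $\phi_0 = k^\lambda \phi_\lambda(\cdot + \lambda\eta)$, i.e.\ \eqref{eq:5.3}. Substituting these translation relations into \eqref{eq:5.1} and performing an affine change of variables reduces \eqref{eq:5.1} to $\phi_0((1-\lambda)y + \lambda z') \geq \phi_0(y)^{1-\lambda}\phi_0(z')^\lambda$, i.e.\ the log-concavity of $\phi_0$; log-concavity of $\phi_1$ and $\phi_\lambda$ then follows from the translation identities.

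The main obstacle I expect is the Legendre-duality/rigidity step in the middle paragraph: turning the inf-convolution identity, together with equality in the integral PL at $t_*$, into the affine relation $V_1^* - V_0^* = \mathrm{linear}$. The point of invoking eventual log-concavity is precisely to reach a regime where all Legendre manipulations are licit and the rigidity is reduced to a smooth, well-conditioned problem; nevertheless, one must carefully track additive constants (which encode the multiplicative factor $k$) and verify that the equality case of the change-of-variables step indeed pins down the Hessians of $V_0^*$ and $V_1^*$. The other stages are comparatively routine given the preceding sections of the paper, modulo the standard verifications that the objects involved (in particular $\ol{u}_\lambda$) remain regular enough for the integral comparisons in the first step.
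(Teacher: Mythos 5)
Your overall architecture matches the paper's: show $u_\lambda=\ol{u}_\lambda$ from mass conservation and the equality hypothesis, pass to a time $t_*$ of strong log-concavity, derive the translation identity \eqref{eq:1.20} at $t_*$, conclude by backward uniqueness, and read off log-concavity from \eqref{eq:5.1} and \eqref{eq:5.3}. The first, fourth and fifth stages are fine. The problem is the middle rigidity step, which you yourself flag as the main obstacle: as sketched, it does not close. Writing $V_i=-\log u_i(\cdot,t_*)$, $A_i=\nabla^2V_i^*$ and changing variables $x=\nabla V_\lambda^*(\xi)$, one gets
\begin{equation*}
\int e^{-V_\lambda}\,dx=\int \bigl(e^{-V_0(\nabla V_0^*)}\bigr)^{1-\lambda}\bigl(e^{-V_1(\nabla V_1^*)}\bigr)^{\lambda}\det\bigl((1-\lambda)A_0+\lambda A_1\bigr)\,d\xi ,
\end{equation*}
and the Minkowski determinant inequality bounds this \emph{from below} by $\int G_0^{1-\lambda}G_1^{\lambda}\,d\xi$ with $G_i=e^{-V_i(\nabla V_i^*)}\det A_i$, while H\"older bounds that same quantity \emph{from above} by $(\int G_0)^{1-\lambda}(\int G_1)^{\lambda}=(\int e^{-V_0})^{1-\lambda}(\int e^{-V_1})^{\lambda}$. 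The two inequalities point in opposite directions, so the hypothesis $\int e^{-V_\lambda}=(\int e^{-V_0})^{1-\lambda}(\int e^{-V_1})^{\lambda}$ does not force equality in either one; in particular it does not give $A_0\equiv A_1$. (The classical transport proof avoids this because the Brenier map satisfies a Monge--Amp\`ere identity that replaces the H\"older step by an exact equality; the Legendre conjugates of the potentials satisfy no such transport equation.)

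The paper closes this step with a genuinely pointwise PDE argument that your sketch omits. Since the infimum defining $\ol{u}_\lambda$ is attained and equals $u_\lambda$, at matched points $(x_1,x_2,x_*)$ one gets, besides the gradient identity \eqref{eq:5.8} and the Hessian inequality $\bigl((1-\lambda)X_1^{-1}+\lambda X_2^{-1}\bigr)^{-1}\le Y$, also the \emph{time} first-order condition $(1-\lambda)\partial_tv_0+\lambda\partial_tv_1=\partial_tv_\lambda$. Plugging the equation $\partial_tv_i=|\nabla v_i|^2+\Delta v_i$ for $v_i=\log u_i$ into this and using the equal gradients yields the trace identity $(1-\lambda)\tr X_1+\lambda\tr X_2=\tr Y$; combined with the trace inequality for harmonic means of negative definite matrices (\cite{CoSa2}*{Lemma~2}), whose equality case is $X_1=X_2$, this forces $X_1=X_2=Y$ at every contact point. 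Only then does the Legendre transform enter (Lemma~\ref{thm:5.2}), and in a different role than in your sketch: it converts ``equal gradients and equal Hessians at matched points for every slope $\xi$'' into the affine relation $V_1^*-V_0^*=\text{linear}$, i.e.\ the translation identity. If you want to keep your outline, replace the integral/H\"older rigidity paragraph with this pointwise argument; otherwise the proof has a gap precisely where you predicted.
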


We need the following lemma in our proof of Theorem~\ref{thm:5.1}.

\begin{lem}\label{thm:5.2}
Assume that $W_1, W_2\in C^2(\R^n)$ are strongly convex functions in $\R^n$. 
 Assume in addition that for any $\xi\in \R^n$, there exist $x, y\in \R^n$ such that $\xi=\nabla W_1(x)=\nabla W_2(y)$ and $\nabla^2 W_1(x)=\nabla^2 W_2(y)$ hold. Then there exist $\eta\in \R^n$ and $b\in \R$ such that 
$$
W_1(x)=W_2(x+\eta)+b\quad \text{ for all $x\in \R^n$.}
$$
\end{lem}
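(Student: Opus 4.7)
The plan is to show that under the stated hypotheses, the gradient map $\nabla W_2$ translated by some constant vector coincides with $\nabla W_1$, which then gives the conclusion after integration.

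First, I would use strong convexity to establish that each $\Phi_i := \nabla W_i$ is a $C^1$ injective map on $\R^n$. Strong convexity yields $\langle \nabla W_i(x) - \nabla W_i(x'), x - x' \rangle \ge c|x-x'|^2$ for some $c>0$, which gives injectivity directly. The surjectivity of both $\Phi_1$ and $\Phi_2$ follows from the hypothesis that every $\xi \in \R^n$ is attained as $\nabla W_1(x) = \nabla W_2(y)$ for suitable $x,y$. Thus each $\Phi_i$ is a $C^1$ bijection, and by the inverse function theorem (using that $\nabla^2 W_i$ is everywhere positive definite), $\Phi_i$ is in fact a $C^1$ diffeomorphism of $\R^n$ onto itself.

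Next, I would define the map $T := \Phi_2^{-1} \circ \Phi_1: \R^n \to \R^n$. By construction,
\[
\nabla W_1(x) = \nabla W_2(T(x)) \quad \text{for all } x \in \R^n.
\]
Since the pairing $(x,y)$ with common gradient $\xi = \nabla W_1(x) = \nabla W_2(y)$ is \emph{uniquely} determined by $\xi$ (by injectivity of the $\Phi_i$), the hypothesis $\nabla^2 W_1(x) = \nabla^2 W_2(y)$ applies precisely to the pair $(x, T(x))$. That is,
\[
\nabla^2 W_1(x) = \nabla^2 W_2(T(x)) \quad \text{for all } x \in \R^n.
\]
On the other hand, differentiating the gradient identity above in $x$ yields
\[
\nabla^2 W_1(x) = \nabla^2 W_2(T(x))\, DT(x).
\]
Because $\nabla^2 W_2(T(x))$ is positive definite (strong convexity), it is invertible, and comparing the last two displays forces $DT(x) = I$ on all of $\R^n$.

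Finally, since $T$ is $C^1$ with $DT \equiv I$ on the connected set $\R^n$, we conclude $T(x) = x + \eta$ for some constant vector $\eta \in \R^n$. Substituting back gives $\nabla W_1(x) = \nabla W_2(x + \eta)$ for all $x$, and integrating this identity along any path yields $W_1(x) = W_2(x + \eta) + b$ for a constant $b \in \R$, which is the desired conclusion.

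I do not expect any serious obstacle here: the only point that deserves care is the uniqueness step (ensuring that the hypothesis $\nabla^2 W_1(x) = \nabla^2 W_2(y)$ can legitimately be applied with $y = T(x)$), and this is handled by the injectivity of the gradient maps. The rest is a standard ODE-type rigidity argument on $\R^n$.
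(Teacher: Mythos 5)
Your proof is correct. It takes a genuinely different, and in fact more elementary, route than the paper. The paper passes to the Legendre transforms $W_i^\ast$, uses the identity $\nabla^2 W_i^\ast(\xi)\,\nabla^2 W_i(x)=I$ at $\xi=\nabla W_i(x)$ to convert the hypothesis into $\nabla^2 W_1^\ast\equiv\nabla^2 W_2^\ast$, concludes that $W_1^\ast-W_2^\ast$ is affine, and then dualizes back via $W_i=(W_i^\ast)^\ast$. You instead work directly with the gradient maps: you build the map $T=(\nabla W_2)^{-1}\circ\nabla W_1$ (which is exactly $\nabla W_2^\ast\circ\nabla W_1$ in the paper's language), use injectivity of the gradients to see that the Hessian hypothesis applies to the pair $(x,T(x))$, and differentiate the identity $\nabla W_1=\nabla W_2\circ T$ to force $DT\equiv I$, hence $T(x)=x+\eta$. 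The two arguments encode the same computation --- your step $DT(x)=(\nabla^2 W_2(T(x)))^{-1}\nabla^2 W_1(x)=I$ is precisely the paper's $\nabla^2 W_1^\ast(\xi)=\nabla^2 W_2^\ast(\xi)$ --- but yours avoids invoking regularity and biduality of the convex conjugate (which the paper has to cite), at the cost of checking that $\nabla W_i$ is a global $C^1$ diffeomorphism; you do this correctly via strong convexity (injectivity), the hypothesis (surjectivity), and the inverse function theorem (smoothness of the inverse). One point worth making explicit in a final write-up is that surjectivity of \emph{each} $\nabla W_i$ really does follow from the hypothesis as stated, since for every $\xi$ it provides preimages under both gradient maps; with that noted, the argument is complete.
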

\begin{proof}
The key tool of our proof is the Legendre transform (convex conjugate) $W^*$ for a strongly convex function $W\in C^2(\R^n)$ in ${\mathbb R}^n$, that is,
$$
W^\ast(\xi)=\sup_{x\in \R^n}\{\langle x, \xi\rangle -W(x)\}, \quad \xi\in \R^n.
$$
%\textcolor{blue}{\sout{of $W\in C^2(\R^n)$ such that $\nabla^2 W>0$ in $\Omega$.}} %and $W(x)\to 0$ as $x\to \partial 
One can also replace the supremum above by a maximum. In fact, for each $\xi\in \R^n$, the strong convexity of $W$ enable us to find a unique $x\in \Omega$ where $\nabla W(x)=\xi$. 
We can further show $W^\ast\in C^2(\R^n)$ and obtain the following properties (see for instance \cite{HLBook}*{Corollary~4.2.10}): %\textcolor{blue}{(It is better to add more precise information)}: 
\begin{align}
\label{eq:5.4}
 & W(x)=\sup_{\xi\in \R^n}\{\langle x, \xi\rangle -W^\ast(\xi)\},\\
\label{eq:5.5}
 & \nabla^2 W^\ast(\xi) \nabla^2 W(x)=I\quad \text{with}\quad \xi=\nabla W(x),
\end{align}
for all $x\in{\mathbb R}^n$.
Let $W_i^\ast$ denote the conjugate of $W_i$ for $i=1, 2$. By assumptions, for any $\xi\in \R^n$, we have $x_1$, $x_2\in \R^n$ such that 
\[
\xi=\nabla W_1(x_1)=\nabla W_2(x_2), \quad \nabla^2 W_1(x_1)=\nabla^2 W_2(x_2).
\]
In view of the relation \eqref{eq:5.5}, we are then led to $\nabla^2 W_1^\ast(\xi)=\nabla^2 W_2^\ast(\xi)$ for all $\xi\in \R^n$. This immediately yields the existence of $\eta\in \R^n$ and $b\in \R$ such that
\[
W_1^\ast(\xi)=W_2^\ast(\xi)-\langle \eta, \xi\rangle -b\quad \text{for all $\xi\in \R^n$.}
\]
We then can use the duality \eqref{eq:5.4} to deduce that, for all $x\in \R^n$,  
\begin{equation}\label{eq:5.6}
W_1(x)=\sup_{\xi \in \R^n} \{\langle x+\eta, \xi \rangle -W_2^\ast(\xi)+b\}=W_2(x+\eta)+b
\end{equation}
as desired. The proof is thus complete.
\end{proof}

Let us now prove Theorem~\ref{thm:5.1}.

\begin{proof}[Proof of Theorem~{\rm\ref{thm:5.1}}]
Let $Q=\R^n\times (0, \infty)$ and $u_i\in C^\infty(Q)\cap L^\infty(Q)\cap C(\ol{Q})$ be the unique solution to the heat equation with initial data $\phi=\phi_i$ for $i\in \{0, 1, \lambda\}$. Take the Minkowski convolution $\ol{u}_\lambda$ of $u_0, u_1$ as in \eqref{eq:1.12}. Applying Theorem~\ref{thm:1.3}, we get $\ol{u}_\lambda\leq u_\lambda$ in $Q$. We can also use Proposition \ref{thm:4.1} to show that, for any $t>0$, 
\[
\int_{\R^n} \ol{u}_\lambda(x, t)\, dx\geq \left(\int_{\R^n} u_0(x, t)\, dx\right)^{1-\lambda} \left(\int_{\R^n} u_1(x, t)\, dx\right)^\lambda 
\]
and therefore 
\begin{equation}\label{eq:5.7}
\int_{\R^n} u_\lambda(x, t)\, dx\geq \int_{\R^n} \ol{u}_\lambda(x, t)\, dx\geq \left(\int_{\R^n} u_0(x, t)\, dx\right)^{1-\lambda} \left(\int_{\R^n} u_1(x, t)\, dx\right)^\lambda. 
\end{equation}
Note that the heat equation preserves the total mass of the solution $u_i$, that is,  
\[
\int_{\R^n} u_i(x, t)\, dx=\int_{\R^n} \phi_i(x)\, dx
\]
holds for all $t>0$ and $i\in \{0, 1, \lambda\}$. It thus follows from \eqref{eq:5.2} that
\[
\int_{\R^n} u_\lambda(x, t)\, dx= \left(\int_{\R^n} u_0(x, t)\, dx\right)^{1-\lambda} \left(\int_{\R^n} u_1(x, t)\, dx\right)^\lambda. 
\]
Hence, from \eqref{eq:5.7} we see that $u_\lambda=\ol{u}_\lambda$ in $Q$.

It is known that solutions of the heat equation with nonnegative and compactly supported initial data enjoy the so-called eventual log-concavity: there exist $t_\ast>0$ and $\sigma>0$ such that $\nabla^2 v_i(\cdot, t_\ast)< -\sigma I$ with $v_i=\log u_i$ for all $i\in \{0, 1, \lambda\}$. See \cite{LV1}*{Theorem~5.1}, \cite{IshSaTa1}*{Theorem~3.3} and more recent work \cite{Ish-pre23} for more details about this geometric property of the heat equation.

For such $t_\ast>0$ and any $x_\ast\in \R^n$, it is easily seen from the relation $u_\lambda=\ol{u}_\lambda$ that 
\[
(x, t)\mapsto \min\left\{ 
(1-\lambda)v_0(y, t)+\lambda v_1(z, t):\,  y, z\in \R^n, 
\,  x =(1-\lambda)y+  \lambda z
\right\} -v_\lambda(x, t)
\]
attains a maximum at $(x_\ast, t_\ast)$. Then there exist $x_1, x_2\in \R^n$ such that $(1-\lambda)x_1+\lambda x_2=x_\ast$ and 
$$
(y, z, t)\mapsto (1-\lambda)v_0(y, t)+\lambda v_1(z, t)-v_\lambda((1-\lambda)y+\lambda z, t)
$$
attains a minimum at $(x_1, x_2, t_\ast)\in \R^{2n}$. 
%\textcolor{blue}{(What is the relation of $x_\ast\in \R^n$ and $t_\ast>0$? $(x_1, x_2, t_\ast)\in \R^{2n}$? This part is strange.)}
We thus obtain the following relations: 
$$
(1-\lambda)v_1(x_1, t_\ast)+\lambda v_2(x_2, t_\ast)=v_\lambda (x_\ast, t_\ast).
$$
$$
(1-\lambda)  \partial_t v_1(x_1, t_\ast) + \lambda \partial_t v_2(x_2, t_\ast)=\partial_t v_\lambda(x_\ast, t_\ast),
$$
\begin{equation}\label{eq:5.8}
 \nabla v_1(x_1, t_\ast)= \nabla v_2(x_2, t_\ast)=\nabla v_\lambda(x_\ast, t_\ast),
\end{equation}
and
\begin{equation}\label{eq:5.9}
\left((1-\lambda) X_1^{-1}+\lambda X_2^{-1}\right)^{-1}\leq Y,
\end{equation}
where we denote $X_1=\nabla^2 v_1 (x_1, t_\ast)$, $X_1=\nabla^2 v_1(x_2, t_\ast)$ and $Y=\nabla^2 v_\lambda (x_\ast, t_\ast)$.

The proof of \eqref{eq:5.9} can be found in \cite{ALL}*{Lemma~4}. 
In fact, it is easily observed that, for any fixed $h, h_1, h_2\in \R^n$ such that $h=(1-\lambda) h_1+\lambda h_2$, 
\[
v_\lambda(x_\ast+rh, t_\ast)\geq  (1-\lambda) v_1(x_\ast+rh_1, t_\ast)+\lambda 
v_2(x_1+rh_2, t_\ast)
\]
holds for all $r>0$ small, which by Taylor expansion and \eqref{eq:5.8} yields, 
\begin{equation}\label{eq:5.10}
\langle Y h, h\rangle\geq (1-\lambda)\langle X_1 h_1, h_1\rangle+\lambda \langle X_2 h_2, h_2\rangle.
\end{equation}
It turns out that, under the constraint $h=(1-\lambda) h_1+\lambda h_2$, 
\[
h_i=X_i^{-1}((1-\lambda) X_1^{-1}+\lambda X_2^{-1})^{-1} h, \quad i=1, 2
\]
minimizes the right hand side of \eqref{eq:5.10}.  
Therefore \eqref{eq:5.9} follows immediately.  Moreover, we have $X_1, X_2, Y<0$ because of the log-concavity of $u_i$.

We next apply the equation for $v_i$ to obtain
\[
\partial_t v_i(x_i, t_\ast)-|\nabla v_i(x_i, t_\ast)|^2- \Delta v_i(x_i, t_\ast)=0
\]
for $i=1, 2$ and 
\[
\partial_t v_\lambda(x_\ast, t_\ast)-|\nabla v_\lambda(x_\ast, t_\ast)|^2-\Delta v_\lambda(x_\ast, t_\ast)=0.
\]
By \eqref{eq:5.9}, it follows that 
\begin{equation}\label{eq:5.11}
(1-\lambda) \tr X_1+\lambda \tr X_2 = \tr Y\geq \tr \left((1-\lambda) X_1^{-1}+\lambda X_2^{-1}\right)^{-1}.
\end{equation}
It is known \cite{CoSa2}*{Lemma~2} that for $X_1, X_2<0$, one actually has 
\[
(1-\lambda) \tr X_1+\lambda \tr X_2 \leq \tr \left((1-\lambda) X_1^{-1}+\lambda X_2^{-1}\right)^{-1},
\]
and the equality holds only when $X_1=X_2$. Hence, by \eqref{eq:5.11} we obtain $X_1=X_2=Y$, 
which is equivalent to 
$$
\nabla^2 v_1(x_1, t_\ast)=\nabla^2 v_2(x_2, t_\ast)=\nabla^2 v_\lambda(x_\ast, t_\ast).
$$

Applying further Lemma \ref{thm:5.2} to $W_i=-v_i$, we obtain $b\in \R$, $c>0$ and $\eta\in \R^n$ such that $cx_1+\eta=x_2$ and 
\[
{1\over c^2}v_2(cx+\eta, t_\ast)+b=v_1(x, t_\ast)
\]
holds for all $x\in \R^n$. Differentiating the latter relation  and letting $x=x_1$, we have 
\[
\nabla v_1(x_1, t_\ast)= {1\over c}\nabla v_2(x_2, t_\ast),
\]
which, combined with \eqref{eq:5.8} yields $c=1$. We therefore deduce that 
\begin{equation}\label{eq:5.12}
v_1(x, t_\ast)=v_2(x+\eta, t_\ast)+b. 
\end{equation}
Analogously, we can show that 
\begin{equation}\label{eq:5.13}
v_1(x, t_\ast)=v_\lambda (x+\lambda \eta, t_\ast)+\lambda b.
\end{equation}
Combining \eqref{eq:5.12} and \eqref{eq:5.13}, we thus obtain \eqref{eq:1.20}
for all $x\in \R^n$ with $k=e^b$. 
 
Since $u_0, ku_1$ and $k^\lambda u_\lambda$ are all solutions to the heat equation, 
by the backward uniqueness for the heat equation \cite{EKPV}*{Theorem~1}, we deduce the desired equality condition \eqref{eq:5.3}. 

The log-concavity of $\phi_i$ in $\R^n$ for all $i\in \{0, 1, \lambda\}$ can be easily obtained from \eqref{eq:5.1} and~\eqref{eq:5.3}. In fact, plugging \eqref{eq:5.3} into \eqref{eq:5.1}, we have 
\[
\phi_\lambda((1-\lambda)y+\lambda z)\geq \phi_0(y+\lambda \eta)^{1-\lambda}\phi_1(z+(\lambda -1)\eta)^\lambda
\]
for all $y, z\in \R^n$. Hence, for any $y', z'\in \R^n$, by letting $y=y'-\lambda \eta$ and $z=z'+(1-\lambda)\eta$, we have 
\[
\phi_\lambda((1-\lambda)y'+\lambda z')\geq \phi_0(y')^{1-\lambda}\phi_1(z')^\lambda,
\]
which verifies the log-concavity of $\phi_\lambda$. The log-concavity of $\phi_0$ and $\phi_1$ also holds thanks to \eqref{eq:5.3} again. The proof is complete.
\end{proof}

%\begin{rem}
%By taking $\phi_0=f, \phi_1=g$ and $\phi_\lambda=h$ again, we immediately obtain an equality condition for PL. %\textcolor{blue}{\sout{the Pr\'ekopa--Leindler inequality}}
%{\color{blue} The only use we make above of the extra assumption about the compact supports of $\phi_0$ and $\phi_1$ is for applying the eventual log-concavity results in \cite{LV1} and \cites{Ish-pre23, IshSaTa1}, so to be sure that $u_0$ and $u_1$ become log-concave at a certain time. Then, if we assume $\phi_0$ and $\phi_1$ to be log-concave since the beginning, the proof still work thanks to the preservation of log-concavity by the heat flow. and we get the desired equality conditions in this case as well.}
%\end{rem}
\begin{rem}For the equality condition of the case with $\alpha\in [-1/n, 0)$, one may repeat the same argument as in the case of $\alpha=0$ to formally derive the equality condition~\eqref{eq:1.5}. However, we are not able to complete the proof rigorously, 
due to several inherent difficulties including the absence of results on the backward uniqueness for the fast diffusion equation \eqref{eq:1.8}.
\end{rem}

\appendix

\section{Relation between BBL with different exponents}\label{section:A}

In this appendix we show how to obtain BBL for some exponent $\alpha=q$, including its equality conditions, from another BBL with exponent $\alpha=p$ with $p\leq q$. In particular, our result shows that PL implies BBL for all $q>0$ and that BBL with $\alpha={-1/n}$ implies every BBL. % \textcolor{blue}{\sout{all the BBL inequalities}}.
The argument is based on homogeneity.  
%\textcolor{red}{and it is a common one.} 
%\textcolor{blue}{\sout{and it may be considered folklore.}} 
We include a proof here for the sake of completeness and for the benefit of the reader.
\begin{prop}\label{thm:A.1}
Let $-1/n\leq p\leq q\leq+\infty$. 
Assume that the statement of Theorem~{\rm\ref{thm:1.1}} holds with $\alpha=p$. 
Then it also holds for $\alpha=q$.
%\textcolor{blue}{\sout{Assume Theorem~\textcolor{red}{{\rm\ref{thm:1.1}}} with $\alpha=p$ to be true, then it is also true for $\alpha=q$.}} 
Moreover, the equality condition %\textcolor{blue}{\sout{equality conditions}} in BBL with $\alpha=p$ 
implies the equality condition
%\textcolor{blue}{\sout{imply equality conditions}} 
in BBL with $\alpha=q$; 
in other words, if the statement of Theorem~{\rm \ref{thm:1.2}} holds with $\alpha=p$, then it holds for $\alpha=q$ too.
\end{prop}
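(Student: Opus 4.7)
My plan is a classical homogeneity/rescaling argument. The starting observation is that by the power-mean inequality $M_q \geq M_p$ for $q \geq p$, so the $q$-hypothesis on $(f, g, h)$ immediately implies the $p$-hypothesis. However, a direct application of BBL-$p$ only yields $\int h \geq M_{p/(1+np)}(F, G; \lambda)$ with $F = \int f, G = \int g$, which is weaker than the target $\int h \geq M_{q/(1+nq)}(F, G; \lambda)$ since $\alpha \mapsto \alpha/(1+n\alpha)$ is strictly increasing on $[-1/n, \infty]$.

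To recover the correct exponent, I would apply BBL-$p$ not to $(f, g, h)$ directly but to a suitably rescaled triple with a modified weight. Assuming $F, G > 0$ and focusing on $q > 0$ (with analogous modifications for $q \leq 0$), choose $\sigma, \tau > 0$ with $\sigma/\tau = (F/G)^{q/(1+nq)}$ and define
$$\tilde f(y) = \frac{\sigma^n}{F} f(\sigma y), \quad \tilde g(z) = \frac{\tau^n}{G} g(\tau z),$$
which are probability densities. Set $c = (1-\lambda)\sigma + \lambda\tau$, $\tilde\lambda = \lambda\tau/c$, and $\tilde h(\tilde x) = h(c\tilde x)/K$ with $K = M_q(F/\sigma^n, G/\tau^n; \lambda)$. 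The key algebraic identity
$$M_q(Aa, Bb; \lambda) = M_q(A, B; \lambda) \cdot M_q(a, b; \tilde\lambda'), \quad \tilde\lambda' = \frac{\lambda B^q}{(1-\lambda)A^q + \lambda B^q},$$
becomes, under the specific choice $\sigma/\tau = (F/G)^{q/(1+nq)}$, exactly $\tilde\lambda' = \tilde\lambda$. The $q$-hypothesis on $(f, g, h)$ with weight $\lambda$ thus transforms into a $q$-hypothesis on $(\tilde f, \tilde g, \tilde h)$ with weight $\tilde\lambda$. By power mean the $p$-hypothesis also holds, and BBL-$p$ with weight $\tilde\lambda$ yields $\int \tilde h \geq M_{p/(1+np)}(1, 1; \tilde\lambda) = 1$. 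A direct computation gives $\int \tilde h = \int h/(c^n K)$ together with $c^n K = M_{q/(1+nq)}(F, G; \lambda)$, delivering the BBL-$q$ conclusion.

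For the equality case, equality in BBL-$q$ for $(f, g, h)$ translates to equality in BBL-$p$ for $(\tilde f, \tilde g, \tilde h)$. By the assumed equality characterization at $\alpha = p$ (Theorem~\ref{thm:1.2} with $\alpha=p$), the rescaled triple is $p$-concave with homothety relations of the form~\eqref{eq:1.5}, and these pull back through the affine rescaling to give the analogous homothety for $(f, g, h)$. The $q$-concavity follows by combining the $p$-saturation $\tilde h = M_p(\tilde f, \tilde g; \tilde\lambda)$ forced by BBL-$p$ rigidity with the $q$-hypothesis $\tilde h \geq M_q(\tilde f, \tilde g; \tilde\lambda) \geq M_p$: since $M_p = M_q$ holds only when the arguments of the power mean agree, the homothety structure then forces the $q$-concavity of the (affinely related) functions.

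The main obstacle is the precise coordination of the rescaling parameters, especially ensuring that $\sigma/\tau = (F/G)^{q/(1+nq)}$ simultaneously aligns the factorization weight $\tilde\lambda'$ with the rescaled convex combination weight $\tilde\lambda$ arising from the spatial dilation. A case analysis is needed for $q \leq 0$ and the degenerate endpoints $q = -1/n, \infty$, with appropriate modifications of the factorization identity (in particular, using the multiplicativity of $M_0$ when $q = 0$).
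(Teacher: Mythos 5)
Your argument is correct and is essentially the paper's own proof: choosing $\sigma=F^{q'}/M$, $\tau=G^{q'}/M$ with $M=(1-\lambda)F^{q'}+\lambda G^{q'}$ reproduces exactly the paper's rescaled triple $\hat f,\hat g,\hat h$, its modified weight $\mu=\lambda G^{q'}/M$, and the identity $c^nK=M_{q'}(F,G;\lambda)$, after which the monotonicity $M_q\ge M_p$ and BBL-$p$ give the result. The treatment of the equality case (transfer of saturation to the rescaled triple, pull-back of the homotheties, and deduction of $q$-concavity from the $q$-hypothesis) likewise matches the paper's.
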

\begin{proof}
Assume that BBL %\textcolor{blue}{\sout{the BBL inequality}} 
holds for some $\alpha=p\geq-1/n$. 
Let $f$, $g$, $h$ be nonnegative integrable functions in ${\mathbb R}^n$ and set
\begin{align*}
 &  F=\int_{\R^n}  f(x)\,dx, \qquad 
G=\int_{\R^n}  g(x)\,dx, \quad H=\int_{\R^n}  h(x)\,dx\, \\
 & p'=\left\{\begin{array}{ll}
p/(1+np)\,&\text{if }p>-1/n,\vspace{3pt}\\
-\infty\,&\text{if }p=-1/n.
\end{array}\right.
\end{align*}
%For any $q\geq-1/n$, we denote by $(BBL)_q$ the following assertion:
%\smallskip
%
%$(BBL)_q$\qquad{\em if $h((1-\lambda)x+\lambda y)\geq M_q(f(x),g(y);\lambda)$ then $H\geq M_{q'}(F,G;\lambda)$.}
%
%\begin{proposition}
%Let $q\geq-1/n$, then $(BBL)_q$ implies $(BBL)_p$ for every $p>q$.
%\end{proposition}
%{\em Proof.}
Let $q>p$ and assume that
\begin{equation}\label{eq:A.1}
 h(1-\lambda)y+\lambda z)\geq M_q( f(y), g(z);\lambda)\,
\end{equation}
for almost all $y$, $z\in \R^n$. 
We prove that
\begin{equation}\label{eq:A.2}
H\geq M_{q'}(F,G;\lambda)\quad\text{with }\ q'=q/(1+nq). 
\end{equation}
Set $M=(1-\lambda)F^{q'}+\lambda G^{q'}$. Then $M_{q'}(F,G;\lambda)=M^{1/q'}=M^{(1+nq)/q}$ and %\textcolor{red}{\sout{we have}}
$$
\begin{aligned}
\dfrac{ h(1-\lambda)y+\lambda z)}{M_{q'}(F,G;\lambda)}&\geq  \left[(1-\lambda)\dfrac{ f(y)^q}{M^{1+nq}}+\lambda\dfrac{ g(z)^q}{M^{1+nq}}\right]^{1/q}\\
&=\left[\dfrac{1-\lambda}{M}\left(\dfrac{ f(y)}{M^n}\right)^q+\dfrac{\lambda}{M}\left(\dfrac{ g(z)}{M^n}\right)^q\right]^{1/q}\\
&=\left[\dfrac{(1-\lambda)F^{q'}}{M}\left(\dfrac{ f(y)}{M^nF^{1/(1+nq)}}\right)^q+\dfrac{\lambda G^{q'}}{M}\left(\dfrac{ g(z)}{M^nG^{1/(1+nq)}}\right)^q\right]^{1/q}.
\end{aligned}
$$
for almost all $y$, $z\in \R^n$.
Now set 
$$
\hat{h}=\frac{h}{M_{q'}(F,G;\lambda)},\quad
\tilde{f}=\frac{f}{M^nF^{1/(1+nq)}},\quad
\tilde{g}=\frac{g}{M^nG^{1/(1+nq)}},\quad
\mu=\dfrac{\lambda G^{q'}}{M}.
$$
Since
$$
1-\mu=\frac{(1-\lambda)F^{q'}+\lambda G^{q'}}{M}-\dfrac{\lambda G^{q'}}{M}
=\frac{(1-\lambda)F^{q'}}{M},
$$ 
the above inequality can be written as
\begin{equation}\label{eq:A.3}
\hat{h}((1-\lambda)y+\lambda z)\geq M_q(\tilde{f}(y),\tilde{g}(z);\mu). 
\end{equation}
Notice that in the argument of $\hat{h}$ we have $\lambda$ as weight, while on the right hand side we have a $q$-mean with weight $\mu$.
On the other hand, we have
$$
(1-\lambda)y+\lambda z=(1-\mu)\frac{M}{F^{q'}}y+\mu\frac{M}{G^{q'}}z=(1-\mu)\eta+\mu\zeta, 
$$
where
$$
\eta=\frac{M}{F^{q'}}\,y, \qquad\zeta=\frac{M}{G^{q'}}\,z. 
$$
Hence, further setting 
$$
\hat{f}(\eta)=\tilde{f}\left(\frac{F^{q'}}{M}\eta\right)=\frac{ f\left(\frac{F^{q'}}{M}\eta\right)}{M^nF^{1/(1+nq)}}, \qquad\hat{g}(\zeta)=\tilde{g}\left(\frac{G^{q'}}{M}\zeta\right)=\frac{ g\left(\frac{G^{q'}}{M}\zeta\right)}{M^nG^{1/(1+nq)}}, 
$$
we can rewrite \eqref{eq:A.3} as
$$
\hat{h}((1-\mu)\eta+\mu\zeta)\geq M_q(\hat{ f}(\eta),\hat{g}(\zeta);\mu).
$$
By the relation $q>p$ and the monotonicity of power means,  we have 
$$
\hat{h}((1-\mu)\eta+\mu\zeta)\geq M_q(\hat{f}(\eta),\hat{g}(\zeta);\mu)\geq M_p(\hat{f}(\eta),\hat{g}(\zeta);\mu). 
$$
Then the assumption of BBL with exponent $p$ is satisfied by 
$\hat{f}$, $\hat{g}$ and $\hat{h}$,
%\textcolor{blue}{\sout{$\hat{ f}_\lambda$, $\hat{ f}_0$ and $\hat{ f}_1$,}} 
whence
\begin{equation}\label{eq:A.4}
\int_{\R^n}\hat{h}(\xi)\,d\xi
\geq M_{p'}\left(\int_{\R^n}\hat{ f}(\eta)\,d\eta,\int_{\R^n}\hat{g}(\zeta)\,d\zeta;\ \mu\right). 
\end{equation}
Since
\begin{align*}
 & \int_{\R^n}\hat{h}(\xi)\,d\xi=\frac{H}{M_{q'}(F,G;\lambda)}, \\
 & \int_{\R^n}\hat{f}(\eta)d\eta
=\frac{1}{M^nF^{1/(1+nq)}}\left(\frac{M}{F^{q'}}\right)^n\int_{\R^n} f(y)dy=1, \\
& \int_{\R^n}\hat{g}(\zeta)d\zeta=\frac{1}{M^n G^{1/(1+nq)}}\left(\frac{M}{G^{q'}}\right)^n\int_{\R^n} g(y)dy=1, 
\end{align*}
it follows from \eqref{eq:A.4} that
\begin{equation}\label{eq:A.5}
\frac{H}{M_{q'}(F,G;\lambda)}\geq 1, 
\end{equation}
which coincides with the desired conclusion \eqref{eq:A.2}.

Regarding equality conditions, note that the equality in BBL for $\alpha=q$ yields equality in \eqref{eq:A.5}, which in turn coincides with equality in \eqref{eq:A.4}, that is, equality in BBL when $\alpha=p$. Then if Theorem~\ref{thm:1.2} is known to hold for $\alpha=p$, we obtain that $f,\,g$ and $h$ coincides, up to suitable homotheties. To complete the proof we need to prove that $h$ is $q$-concave, but this is now a straightforward consequence of \eqref{eq:A.1}.
\end{proof}
%%%%%%%%%%%%%%%%%%%%%%%%%%%%%%%%%%%%
%%%%%%%%%%%%%%%%%%%%%%%%%%%%%%%%%%%%
\begin{bibdiv}
\begin{biblist}
%%%%%%
\bib{ALL}{article}{
   author={Alvarez, O.},
   author={Lasry, J.-M.},
   author={Lions, P.-L.},
   title={Convex viscosity solutions and state constraints},
%   language={English, with English and French summaries},
   journal={J. Math. Pures Appl. (9)},
   volume={76},
   date={1997},
%   number={3},
   pages={265--288},
%   issn={0021-7824},
%   review={\MR{1441987}},
%   doi={10.1016/S0021-7824(97)89952-7},
}
%%%%%%
\bib{AB}{article}{
   author={Aronson, Donald G.},
   author={B\'{e}nilan, Philippe},
   title={R\'{e}gularit\'{e} des solutions de l'\'{e}quation des milieux
   poreux dans ${\bf R}\sp{N}$},
%   language={French, with English summary},
   journal={C. R. Acad. Sci. Paris S\'{e}r. A-B},
   volume={288},
   date={1979},
%   number={2},
   pages={A103--A105},
%   issn={0151-0509},
%   review={\MR{0524760}},
}
%%%%%%
\bib{BaCo}{article}{
   author={Barthe, F.},
   author={Cordero-Erausquin, D.},
   title={Inverse Brascamp-Lieb inequalities along the heat equation},
   conference={
      title={Geometric aspects of functional analysis},
   },
   book={
      series={Lecture Notes in Math.},
      volume={1850},
      publisher={Springer, Berlin},
   },
   date={2004},
   pages={65--71},
   %review={\MR{2087151}},
   %doi={10.1007/978-3-540-44489-3_7},
}
\bib{BB1}{article}{
   author={Ball, Keith M.},
   author={B\"{o}r\"{o}czky, K\'{a}roly J.},
   title={Stability of the Pr\'{e}kopa-Leindler inequality},
   journal={Mathematika},
   volume={56},
   date={2010},
%   number={2},
   pages={339--356},
%   issn={0025-5793},
%   review={\MR{2678033}},
%   doi={10.1112/S002557931000063X},
}
%%%%%%
\bib{BB2}{article}{
   author={Ball, Keith M.},
   author={B\"{o}r\"{o}czky, K\'{a}roly J.},
   title={Stability of some versions of the Pr\'{e}kopa-Leindler inequality},
   journal={Monatsh. Math.},
   volume={163},
   date={2011},
%   number={1},
   pages={1--14},
%   issn={0026-9255},
%   review={\MR{2787578}},
%   doi={10.1007/s00605-010-0222-z},
}
%%%%%%
\bib{BK}{article}{
   author={Balogh, Zolt\'{a}n M.},
   author={Krist\'{a}ly, Alexandru},
   title={Equality in Borell-Brascamp-Lieb inequalities on curved spaces},
   journal={Adv. Math.},
   volume={339},
   date={2018},
   pages={453--494},
%   issn={0001-8708},
%   review={\MR{3866904}},
%   doi={10.1016/j.aim.2018.09.041},
}
%%%%%%%
%\bib{BCCT}{article}{
%   author={Bennett, Jonathan},
%   author={Carbery, Anthony},
%   author={Christ, Michael},
%   author={Tao, Terence},
%   title={The Brascamp-Lieb inequalities: finiteness, structure and
%   extremals},
%   journal={Geom. Funct. Anal.},
%   volume={17},
%   date={2008},
%%   number={5},
%   pages={1343--1415},
%%   issn={1016-443X},
%%   review={\MR{2377493}},
%%   doi={10.1007/s00039-007-0619-6},
%}

\bib{BoF}{article}{
   author={Bonforte, Matteo},
   author={Figalli, Alessio},
   title={The Cauchy-Dirichlet problem for the fast diffusion equation on
   bounded domains},
   journal={Nonlinear Anal.},
   volume={239},
   date={2024},
   pages={Paper No. 113394, 55},
%   issn={0362-546X},
   %review={\MR{4658530}},
%   doi={10.1016/j.na.2023.113394},
}

\bib{BoFRo}{article}{
   author={Bonforte, Matteo},
   author={Figalli, Alessio},
   author={Ros-Oton, Xavier},
   title={Infinite speed of propagation and regularity of solutions to the
   fractional porous medium equation in general domains},
   journal={Comm. Pure Appl. Math.},
   volume={70},
   date={2017},
  % number={8},
   pages={1472--1508},
  % issn={0010-3640},
   %review={\MR{3666562}},
   %doi={10.1002/cpa.21673},
}

%%%%%%

\bib{Bor1}{article}{
   author={Borell, C.},
   title={Convex set functions in $d$-space},
   journal={Period. Math. Hungar.},
   volume={6},
   date={1975},
%   number={2},
   pages={111--136},
%   issn={0031-5303},
%   review={\MR{0404559}},
%   doi={10.1007/BF02018814},
}
%%%%%%
\bib{Bor2}{article}{
   author={Borell, Christer},
   title={The Ehrhard inequality},
%   language={English, with English and French summaries},
   journal={C. R. Math. Acad. Sci. Paris},
   volume={337},
   date={2003},
%   number={10},
   pages={663--666},
%   issn={1631-073X},
%   review={\MR{2030108}},
%   doi={10.1016/j.crma.2003.09.031},
}
%%%%%%
\bib{BD}{article}{
   author={B\"{o}r\"{o}czky, K\'{a}roly J.},
   author={De, Apratim},
   title={Stability of the Pr\'{e}kopa-Leindler inequality for log-concave
   functions},
   journal={Adv. Math.},
   volume={386},
   date={2021},
   pages={Paper No. 107810, 42},
%   issn={0001-8708},
%   review={\MR{4266751}},
%   doi={10.1016/j.aim.2021.107810},
}
%%%%%
\bib{BFR}{article}{
 author={B\"{o}r\"{o}czky, K\'{a}roly J.},
 author={Figalli, Alessio},
 author={Ramos, Joao P. G.},
 title={A quantitative stability result for the Pr\'ekopa-Leindler inequality for arbitrary measurable functions},
 journal={Ann. Inst. H. Poincar\'e Anal. Non Lin\'eaire},
 date={2023},
 doi={10.4171/AIHPC/97},
}
%%%%%%
\bib{BF}{article}{
   author={Bucur, Dorin},
   author={Fragal\`a, Ilaria},
   title={Lower bounds for the Pr\'{e}kopa-Leindler deficit by some
   distances modulo translations},
   journal={J. Convex Anal.},
   volume={21},
   date={2014},
%   number={1},
   pages={289--305},
%   issn={0944-6532},
%   review={\MR{3235317}},
}
%%%%%%
\bib{BrLi}{article}{
   author={Brascamp, Herm Jan},
   author={Lieb, Elliott H.},
   title={On extensions of the Brunn-Minkowski and Pr\'{e}kopa-Leindler
   theorems, including inequalities for log concave functions, and with an
   application to the diffusion equation},
   journal={J. Functional Analysis},
   volume={22},
   date={1976},
%   number={4},
   pages={366--389},
%   issn={0022-1236},
%   review={\MR{0450480}},
%   doi={10.1016/0022-1236(76)90004-5},
}
%%%%%%%
%\bib{CLL}{article}{
%   author={Carlen, E. A.},
%   author={Lieb, E. H.},
%   author={Loss, M.},
%   title={A sharp analog of Young's inequality on $S^N$ and related entropy
%   inequalities},
%   journal={J. Geom. Anal.},
%   volume={14},
%   date={2004},
%%   number={3},
%   pages={487--520},
%%   issn={1050-6926},
%%   review={\MR{2077162}},
%%   doi={10.1007/BF02922101},
%}
%%%%%%
\bib{Ch1}{article}{
 author={Christ, M.},
 title={Near equality in the two-dimensional Brunn-Minkowski inequality},
journal={preprint (https://arxiv.org/abs/1206.1965)},
}
%%%%%%
\bib{Ch2}{article}{
author={Christ, M.},
title={Near equality in the Brunn-Minkowski inequality},
journal={preprint (https://arxiv.org/abs/1207.5062)},
}
%%%%%%
\bib{Ch3}{article}{
author={Christ, M.},
title={An approximate inverse Riesz-Sobolev inequality},
journal={preprint (https://arxiv.org/abs/1112.3715)},
}
%%%%%%
\bib{CoSa}{article}{
   author={Colesanti, Andrea},
   author={Salani, Paolo},
   title={Quasi-concave envelope of a function and convexity of level sets
   of solutions to elliptic equations},
   journal={Math. Nachr.},
   volume={258},
   date={2003},
   pages={3--15},
%   issn={0025-584X},
%   review={\MR{2000041}},
%   doi={10.1002/mana.200310083},
}
%%%%%%
\bib{CoSa2}{article}{
   author={Colesanti, Andrea},
   author={Salani, Paolo},
   title={The Brunn-Minkowski inequality for $p$-capacity of convex bodies},
   journal={Math. Ann.},
   volume={327},
   date={2003},
%   number={3},
   pages={459--479},
%   issn={0025-5831},
%   review={\MR{2021025}},
%   doi={10.1007/s00208-003-0460-7},
}
%%%%%%
\bib{CMSc}{article}{
   author={Cordero-Erausquin, Dario},
   author={McCann, Robert J.},
   author={Schmuckenschl\"{a}ger, Michael},
   title={A Riemannian interpolation inequality \`a la Borell, Brascamp and
   Lieb},
   journal={Invent. Math.},
   volume={146},
   date={2001},
%   number={2},
   pages={219--257},
%   issn={0020-9910},
%   review={\MR{1865396}},
%   doi={10.1007/s002220100160},
}
%%%%%%
\bib{CIL}{article}{
   author={Crandall, Michael G.},
   author={Ishii, Hitoshi},
   author={Lions, Pierre-Louis},
   title={User's guide to viscosity solutions of second order partial
   differential equations},
   journal={Bull. Amer. Math. Soc. (N.S.)},
   volume={27},
   date={1992},
%   number={1},
   pages={1--67},
%   issn={0273-0979},
%   review={\MR{1118699}},
%   doi={10.1090/S0273-0979-1992-00266-5},
}
%%%%%%
\bib{DeDu}{article}{
   author={Deslauriers, Gilles},
   author={Dubuc, Serge},
   title={Logconcavity of the cooling of a convex body},
   journal={Proc. Amer. Math. Soc.},
   volume={74},
   date={1979},
%   number={2},
   pages={291--294},
%   issn={0002-9939},
%   review={\MR{0524302}},
%   doi={10.2307/2043149},
}
%%%%%%
\bib{DiDi}{article}{
   author={D\'{\i}az, Gregorio},
   author={Diaz, Ildefonso},
   title={Finite extinction time for a class of nonlinear parabolic
   equations},
   journal={Comm. Partial Differential Equations},
   volume={4},
   date={1979},
%   number={11},
   pages={1213--1231},
%   issn={0360-5302},
%   review={\MR{0546642}},
%   doi={10.1080/03605307908820126},
}
%%%%%%
\bib{D}{article}{
   author={Dinghas, Alexander},
   title={\"{U}ber eine Klasse superadditiver Mengenfunktionale von
   Brunn-Minkowski-Lusternikschem Typus},
%   language={German},
   journal={Math. Z.},
   volume={68},
   date={1957},
   pages={111--125},
%   issn={0025-5874},
%   review={\MR{0096173}},
%   doi={10.1007/BF01160335},
}
%%%%%%
\bib{Dubuc}{article}{
   author={Dubuc, Serge},
   title={Crit\`eres de convexit\'{e} et in\'{e}galit\'{e}s int\'{e}grales},
%   language={French, with English summary},
   journal={Ann. Inst. Fourier (Grenoble)},
   volume={27},
   date={1977},
%   number={1},
   pages={135--165},
%   issn={0373-0956},
%   review={\MR{0444863}},
}
%%%%%%
\bib{E}{article}{
   author={Eldan, Ronen},
   title={Thin shell implies spectral gap up to polylog via a stochastic
   localization scheme},
   journal={Geom. Funct. Anal.},
   volume={23},
   date={2013},
%   number={2},
   pages={532--569},
%   issn={1016-443X},
%   review={\MR{3053755}},
%   doi={10.1007/s00039-013-0214-y},
}
%%%%%%
\bib{EKPV}{article}{
   author={Escauriaza, L.},
   author={Kenig, C. E.},
   author={Ponce, G.},
   author={Vega, L.},
   title={Decay at infinity of caloric functions within characteristic
   hyperplanes},
   journal={Math. Res. Lett.},
   volume={13},
   date={2006},
%   number={2-3},
   pages={441--453},
%   issn={1073-2780},
%   review={\MR{2231129}},
%   doi={10.4310/MRL.2006.v13.n3.a8},
}
%%%%%%
\bib{ERV}{article}{
   author={Esteban, Juan R.},
   author={Rodr\'{\i}guez, Ana},
   author={V\'{a}zquez, Juan L.},
   title={A nonlinear heat equation with singular diffusivity},
   journal={Comm. Partial Differential Equations},
   volume={13},
   date={1988},
%   number={8},
   pages={985--1039},
%   issn={0360-5302},
%   review={\MR{0944437}},
%   doi={10.1080/03605308808820566},
}
%%%%%%%
\bib{FPS}{article}{
   author={Feireisl, Eduard},
   author={Petzeltov\'{a}, Hana},
   author={Simondon, Fr\'{e}d\'{e}rique},
   title={Admissible solutions for a class of nonlinear parabolic problems
   with non-negative data},
   journal={Proc. Roy. Soc. Edinburgh Sect. A},
   volume={131},
   date={2001},
  % number={4},
   pages={857--883},
 %  issn={0308-2105},
 %  review={\MR{1855001}},
 %  doi={10.1017/S0308210500001153},
}
%%%%%%%
%\bib{EBook}{book}{
%   author={Evans, Lawrence C.},
%   title={Partial differential equations},
%   series={Graduate Studies in Mathematics},
%   volume={19},
%   publisher={American Mathematical Society, Providence, RI},
%   date={1998},
%   pages={xviii+662},
%%   isbn={0-8218-0772-2},
%%   review={\MR{1625845}},
%%   doi={10.1090/gsm/019},
%}
%%%%%%

\bib{FHT}{article}{
 author={Figalli, Alessio},
 author={van Hintum, Peter},
 author={Tiba, Marius},
 title={Sharp quantitative stability of the Brunn-Minkowski inequality},
 journal={preprint (https://arxiv.org/pdf/2310.20643)}
}
%%%%%
 
\bib{FJ1}{article}{
   author={Figalli, Alessio},
   author={Jerison, David},
   title={Quantitative stability for sumsets in $\mathbb{R}^n$},
   journal={J. Eur. Math. Soc.},
   volume={17},
   date={2015},
%   number={5},
   pages={1079--1106},
%   issn={1435-9855},
%   review={\MR{3346689}},
%   doi={10.4171/JEMS/527},
}
%%%%%%
\bib{FJ2}{article}{
   author={Figalli, Alessio},
   author={Jerison, David},
   title={Quantitative stability of the Brunn-Minkowski inequality for sets
   of equal volume},
   journal={Chinese Ann. Math. Ser. B},
   volume={38},
   date={2017},
%   number={2},
   pages={393--412},
%   issn={0252-9599},
%   review={\MR{3615496}},
%   doi={10.1007/s11401-017-1075-8},
}
%%%%%%
\bib{FJ3}{article}{
   author={Figalli, Alessio},
   author={Jerison, David},
   title={Quantitative stability for the Brunn-Minkowski inequality},
   journal={Adv. Math.},
   volume={314},
   date={2017},
   pages={1--47},
%   issn={0001-8708},
%   review={\MR{3658711}},
%   doi={10.1016/j.aim.2016.12.018},
}
%%%%%%
\bib{FJ4}{article}{
   author={Figalli, Alessio},
   author={Jerison, David},
   title={A sharp Freiman type estimate for semisums in two and three
   dimensional Euclidean spaces},
%   language={English, with English and French summaries},
   journal={Ann. Sci. \'{E}c. Norm. Sup\'{e}r. (4)},
   volume={54},
   date={2021},
%   number={1},
   pages={235--257},
%   issn={0012-9593},
 %  review={\MR{4245865}},
%   doi={10.24033/asens.2458},
}
%%%%%%
\bib{FKa}{article}{
   author={Friedman, Avner},
   author={Kamin, Shoshana},
   title={The asymptotic behavior of gas in an $n$-dimensional porous
   medium},
   journal={Trans. Amer. Math. Soc.},
   volume={262},
   date={1980},
%   number={2},
   pages={551--563},
%   issn={0002-9947},
%   review={\MR{0586735}},
%   doi={10.2307/1999846},
}
%%%%%%
\bib{Gar}{article}{
   author={Gardner, R. J.},
   title={The Brunn-Minkowski inequality},
   journal={Bull. Amer. Math. Soc. (N.S.)},
   volume={39},
   date={2002},
%   number={3},
   pages={355--405},
%   issn={0273-0979},
%   review={\MR{1898210}},
%   doi={10.1090/S0273-0979-02-00941-2},
}
%%%%%%
\bib{GS}{article}{
   author={Ghilli, Daria},
   author={Salani, Paolo},
   title={Quantitative Borell-Brascamp-Lieb inequalities for power concave
   functions},
   journal={J. Convex Anal.},
   volume={24},
   date={2017},
%   number={3},
   pages={857--888},
%   issn={0944-6532},
%   review={\MR{3684805}},
}
%%%%%%
\bib{HM}{article}{
   author={Henstock, R.},
   author={Macbeath, A. M.},
   title={On the measure of sum-sets. I. The theorems of Brunn, Minkowski,
   and Lusternik},
   journal={Proc. London Math. Soc. (3)},
   volume={3},
   date={1953},
   pages={182--194},
%   issn={0024-6115},
%   review={\MR{0056669}},
%   doi={10.1112/plms/s3-3.1.182},
}
%%%%%%
\bib{HP}{article}{
   author={Herrero, Miguel A.},
   author={Pierre, Michel},
   title={The Cauchy problem for $u_t=\Delta u^m$ when $0<m<1$},
   journal={Trans. Amer. Math. Soc.},
   volume={291},
   date={1985},
   pages={145--158}
%   number={1},
%   pages={145--158}.
%   issn={0002-9947},
%   review={MR{0797051},
%   doi={10.2307/1999900},
}
%%%%%%
\bib{HST}{article}{
 author={van Hintum, Peter}, 
 author={Spink, Hunter}, 
 author={Tiba, Marius},
 title={Sharp quantitative stability of the planar Brunn-Minkowski inequality}, 
journal={J. Eur. Math. Soc.}, 
volume={26},
date={2024},
pages={695--730}
}
%%%%%
\bib{HLBook}{book}{
   author={Hiriart-Urruty, Jean-Baptiste},
   author={Lemar\'{e}chal, Claude},
   title={Convex analysis and minimization algorithms. II},
   series={Grundlehren der mathematischen Wissenschaften [Fundamental
   Principles of Mathematical Sciences]},
   volume={306},
   note={Advanced theory and bundle methods},
   publisher={Springer-Verlag, Berlin},
   date={1993},
   pages={xviii+346},
 %  isbn={3-540-56852-2},
   %review={\MR{1295240}},
}
%%%%%%
\bib{HuKi}{article}{
   author={Hui, Kin Ming},
   author={Kim, Sunghoon},
   title={Convergence of the Dirichlet solutions of the very fast diffusion
   equation},
   journal={Nonlinear Anal.},
   volume={74},
   date={2011},
%   number={18},
   pages={7404--7425},
%   issn={0362-546X},
%   review={\MR{2833723}},
%   doi={10.1016/j.na.2011.07.059},
}
%%%%%
\bib{Ish-pre23}{article}{
 author={Ishige, Kazuhiro},
 title={Eventual concavity properties of the heat flow},
 journal={to appear in Math. Ann. (https://arxiv.org/abs/2310.14475)},
}
%%%%%%
\bib{IshLS}{article}{
   author={Ishige, Kazuhiro},
   author={Liu, Qing},
   author={Salani, Paolo},
   title={Parabolic Minkowski convolutions and concavity properties of
   viscosity solutions to fully nonlinear equations},
%   language={English, with English and French summaries},
   journal={J. Math. Pures Appl. (9)},
   volume={141},
   date={2020},
   pages={342--370},
%   issn={0021-7824},
%   review={\MR{4134459}},
%   doi={10.1016/j.matpur.2019.12.010},
}
%%%%%%
\bib{IshSaTa1}{article}{
   author={Ishige, Kazuhiro},
   author={Salani, Paolo},
   author={Takatsu, Asuka},
   title={To logconcavity and beyond},
   journal={Commun. Contemp. Math.},
   volume={22},
   date={2020},
%   number={2},
   pages={1950009, 17},
%   issn={0219-1997},
%   review={\MR{4077093}},
%   doi={10.1142/S0219199719500093},
}
%%%%%
\bib{IshSaTa2}{article}{
 author={Ishige, Kazuhiro},
 author={Salani, Paolo},
 author={Takatsu, Asuka},
 title={Characterization of $F$-concavity preserved by the Dirichlet heat flow},
 journal={to appear in Trans. Amer. Math. Soc. (https://arxiv.org/abs/2207.13449)},
}
%%%%%%
\bib{LV1}{article}{
   author={Lee, Ki-Ahm},
   author={V\'{a}zquez, J. L.},
   title={Geometrical properties of solutions of the porous medium equation
   for large times},
   journal={Indiana Univ. Math. J.},
   volume={52},
   date={2003},
%   number={4},
   pages={991--1016},
%   issn={0022-2518},
%   review={\MR{2001942}},
%   doi={10.1512/iumj.2003.52.2200},
}
%%%%%%
\bib{LV2}{article}{
   author={Lee, Ki-Ahm},
   author={V\'{a}zquez, J. L.},
   title={Parabolic approach to nonlinear elliptic eigenvalue problems},
   journal={Adv. Math.},
   volume={219},
   date={2008},
%   number={6},
   pages={2006--2028},
%   issn={0001-8708},
%   review={\MR{2456273}},
%   doi={10.1016/j.aim.2008.07.012},
}
%%%%%%
\bib{Lei}{article}{
   author={Leindler, L.},
   title={On a certain converse of H\"{o}lder's inequality. II},
   journal={Acta Sci. Math. (Szeged)},
   volume={33},
   date={1972},
%   number={3-4},
   pages={217--223},
%   issn={0001-6969},
%   review={\MR{2199372}},
}
%%%%%%
\bib{LiZh}{article}{
   author={Lin, Fanghua},
   author={Zhang, Q. S.},
   title={On ancient solutions of the heat equation},
   journal={Comm. Pure Appl. Math.},
   volume={72},
   date={2019},
%   number={9},
   pages={2006--2028},
%   issn={0010-3640},
%   review={\MR{3987724}},
%   doi={10.1002/cpa.21820},
}
%%%%%%
\bib{Ma}{article}{
   author={Marsiglietti, Arnaud},
   title={Borell's generalized Pr\'{e}kopa-Leindler inequality: a simple
   proof},
   journal={J. Convex Anal.},
   volume={24},
   date={2017},
%   number={3},
   pages={807--817},
%   issn={0944-6532},
%   review={\MR{3684803}},
}
%%%%%%
\bib{Mc1}{book}{
   author={McCann, Robert John},
   title={A convexity theory for interacting gases and equilibrium crystals},
   note={Thesis (Ph.D.)--Princeton University},
   publisher={ProQuest LLC, Ann Arbor, MI},
   date={1994},
   pages={163},
%   review={\MR{2691540}},
}
%%%%%%
\bib{Mc2}{article}{
   author={McCann, Robert J.},
   title={A convexity principle for interacting gases},
   journal={Adv. Math.},
   volume={128},
   date={1997},
%   number={1},
   pages={153--179},
%   issn={0001-8708},
%   review={\MR{1451422}},
%   doi={10.1006/aima.1997.1634},
}
%%%%%%
\bib{Ot}{article}{
   author={Otto, Felix},
   title={The geometry of dissipative evolution equations: the porous medium
   equation},
   journal={Comm. Partial Differential Equations},
   volume={26},
   date={2001},
%   number={1-2},
   pages={101--174},
%   issn={0360-5302},
%  review={\MR{1842429}},
%   doi={10.1081/PDE-100002243},
}
%%%%%%
\bib{Pre1}{article}{
   author={Pr\'{e}kopa, Andr\'{a}s},
   title={Contributions to the theory of stochastic programming},
   journal={Math. Programming},
   volume={4},
   date={1973},
   pages={202--221},
   issn={0025-5610},
%   review={\MR{0376145}},
%   doi={10.1007/BF01584661},
}
%%%%%%
\bib{Pre2}{article}{
   author={Pr\'{e}kopa, Andr\'{a}s},
   title={On logarithmic concave measures and functions},
   journal={Acta Sci. Math. (Szeged)},
   volume={34},
   date={1973},
   pages={335--343},
%   issn={0001-6969},
%   review={\MR{0404557}},
}
%%%%%%
\bib{RoBook}{book}{
   author={Rockafellar, R. Tyrrell},
   title={Convex analysis},
   series={Princeton Mathematical Series},
   volume={No. 28},
   publisher={Princeton University Press, Princeton, NJ},
   date={1970},
   pages={xviii+451},
%   review={\MR{0274683}},
}
%%%%%%
\bib{R}{book}{
 author={Rossi, Andrea},
 title={Borell-Brascamp-Lieb inequalities: rigidity and stability}, 
 series={PhD Thesis in Mathematics, Informatics, Statistics}, 
 publisher={Universit\`a di Firenze}, 
 date={2018},
 }
%%%%%%
\bib{RS1}{article}{
   author={Rossi, Andrea},
   author={Salani, Paolo},
   title={Stability for Borell-Brascamp-Lieb inequalities},
   conference={
      title={Geometric aspects of functional analysis},
   },
   book={
      series={Lecture Notes in Math.},
      volume={2169},
      publisher={Springer, Cham},
   },
%   isbn={978-3-319-45281-4},
%   isbn={978-3-319-45282-1},
   date={2017},
   pages={339--363},
%   review={\MR{3645132}},
}
%%%%%%
\bib{RS2}{article}{
   author={Rossi, Andrea},
   author={Salani, Paolo},
   title={Stability for a strengthened Borell-Brascamp-Lieb inequality},
   journal={Appl. Anal.},
   volume={98},
   date={2019},
%   number={10},
   pages={1773--1784},
%   issn={0003-6811},
%   review={\MR{3977216}},
%   doi={10.1080/00036811.2018.1451645},
}
%%%%%%
\bib{Sab1}{article}{
   author={Sabinina, E. S.},
   title={On a class of non-linear degenerate parabolic equations},
%   language={Russian},
   journal={Dokl. Akad. Nauk SSSR},
   volume={143},
   date={1962},
   pages={794--797},
%   issn={0002-3264},
%   review={\MR{0132926}},
}
%%%%%%
\bib{Sab2}{article}{
   author={Sabinina, E. S.},
   title={On a class of quasilinear parabolic equations, not solvable for
   the time derivative},
%   language={Russian},
   journal={Sibirsk. Mat. \v{Z}.},
   volume={6},
   date={1965},
   pages={1074--1100},
%   issn={0037-4474},
%   review={\MR{0190552}},
}
\bib{SvH}{article}{
   author={Shenfeld, Y.},
   author={van Handel, R.},
   title={The equality cases of the Ehrhard-Borell inequality},
%   language={Russian},
   journal={Adv. Math.},
   volume={331},
   date={2018},
   pages={339-386},
%   issn={0037-4474},
%   review={\MR{0190552}},
}
%%%%%%
\bib{Tr}{article}{
   author={Trudinger, Neil S.},
   title={Isoperimetric inequalities for quermassintegrals},
%   language={English, with English and French summaries},
   journal={Ann. Inst. H. Poincar\'{e} C Anal. Non Lin\'{e}aire},
   volume={11},
   date={1994},
%   number={4},
   pages={411--425},
%   issn={0294-1449},
%   review={\MR{1287239}},
%   doi={10.1016/S0294-1449(16)30181-0},
}
%%%%%%
\bib{Va1}{article}{
   author={V\'{a}zquez, Juan Luis},
   title={Asymptotic beahviour for the porous medium equation posed in the
   whole space},
%   note={Dedicated to Philippe B\'{e}nilan},
   journal={J. Evol. Equ.},
   volume={3},
   date={2003},
%   number={1},
   pages={67--118},
%   issn={1424-3199},
%   review={\MR{1977429}},
%   doi={10.1007/s000280300004},
}
%%%%%%
\bib{VaBook06}{book}{
   author={V\'{a}zquez, Juan Luis},
   title={Smoothing and decay estimates for nonlinear diffusion equations},
   series={Oxford Lecture Series in Mathematics and its Applications},
   volume={33},
  % note={Equations of porous medium type},
   publisher={Oxford University Press, Oxford},
   date={2006},
   pages={xiv+234},
%   isbn={978-0-19-920297-3},
 %  isbn={0-19-920297-4},
   %review={\MR{2282669}},
 %  doi={10.1093/acprof:oso/9780199202973.001.0001},
}

\bib{VaBook}{book}{
   author={V\'{a}zquez, Juan Luis},
   title={The porous medium equation},
   series={Oxford Mathematical Monographs},
%   note={Mathematical theory},
   publisher={The Clarendon Press, Oxford University Press, Oxford},
   date={2007},
   pages={xxii+624},
%   isbn={978-0-19-856903-9},
%   isbn={0-19-856903-3},
%   review={\MR{2286292}},
}
%%%%%%
\end{biblist}
\end{bibdiv}
%%%%%%%%%%%%%%%%%%%%%%%%%%%%%%%%%%%%%%
%%%%%%%%%%%%%%%%%%%%%%%%%%%%%%%%%%%%%%
\end{document}